\numberwithin{equation}{section}
\numberwithin{figure}{section}
\theoremstyle{plain}
\newtheorem{thm}{Theorem}
  \theoremstyle{plain}
  \newtheorem{prop}[thm]{Proposition}
  \theoremstyle{plain}
  \newtheorem{lem}[thm]{Lemma}
  \theoremstyle{remark}
  \newtheorem{rem}[thm]{Remark}
  \theoremstyle{plain}
  \newtheorem{cor}[thm]{Corollary}
  \theoremstyle{remark}
  \newtheorem*{rem*}{Remark}
\newcommand{\Ga}{\Gamma}\newcommand{\Z}{\mathbb{Z}}\newcommand{\italic}{\textit}\newcommand{\stimes}{\boxtimes}\newcommand{\one}{\textbf{1}}\newcommand{\C}{\mathcal{C}}\newcommand{\D}{\mathcal{D}}\newcommand{\N}{\mathcal{N}}\newcommand{\M}{\mathcal{M}}\newcommand{\eL}{\mathcal{L}}\newcommand{\el}{\mathcal{L}}\newcommand{\del}{\partial}\newcommand{\dtimes}{\stimes_\D}\newcommand{\ty}{\mathcal{TY}(A,\chi,\tau)}\newcommand{\mhpsi}{\mathcal{M}(H,\psi)}\newcommand{\nizav}{\perp}\newcommand{\kpsih}{k^{\psi}H}
\begin{document}

\title[Module categories]{{\normalsize Module categories over graded fusion categories}}

\author{Ehud Meir and Evgeny Musicantov}
\begin{abstract}
{\normalsize Let $\C$ be a fusion category which is an extension
of a fusion category $\D$ by a finite group $G$. We classify module
categories over $\C$ in terms of module categories over $\D$ and
the extension data $(c,M,\alpha)$ of $\C$. We also describe functor
categories over $\C$ (and in particular the dual categories of $\C$).
We use this in order to classify module categories over the Tambara
Yamagami fusion categories, and their duals.}{\normalsize \par}
\end{abstract}
\maketitle
\global\long\global\long\def\qC{\mathcal{C}}
 \global\long\global\long\def\qD{\mathcal{D}}
 \global\long\global\long\def\mmodule{\mathcal{M}}
 \global\long\global\long\def\mbimodule{\mathcal{M}}
 \global\long\global\long\def\nmodule{\mathcal{N}}
 \global\long\global\long\def\dtimes{\boxtimes_{\qD}}
 \global\long\global\long\def\vecatimes{\boxtimes_{Vec_{A}}}
 \global\long\global\long\def\lmodule{\mathcal{L}}
 \global\long\global\long\def\one{\boldsymbol{1}}
 \global\long\global\long\def\cmdual{\qC_{\mmodule}^{*}}


\section{Introduction}

Let $\C$ be a fusion category. We say that $\C$ is an extension
of the fusion category $\D$ by a finite group $G$ if $\C$ is faithfully
graded by the group $G$ in such a way that $\C_{e}=\D$. In \cite{ENO2}
Etingof et. al. classified extension of a given fusion category $\D$
by a given finite group $G$. Their classification is given by a triple
$(c,M,\alpha)$, where $c:G\rightarrow Pic(\D)$ is a homomorphism,
$M$ belongs to a torsor over $H^{2}(G,inv(Z(\D)))$, and $\alpha$
belongs to a torsor over $H^{3}(G,k^{*})$. The group $Pic(\D)$ is
the group of invertible $\D$-bimodules (up to equivalence), and the
group $inv(Z(\D))$ is the group of (isomorphism classes of) invertible
objects in the center $Z(\D)$ of $\D$.

Let us recall briefly the construction from \cite{ENO2}. Suppose
that we are given a classification data $(c,M,\alpha)$. The corresponding
category $\C$ will be $\bigoplus_{g\in G}c(g)$ as a $\D$-bimodule
category. If we choose arbitrary isomorphisms $c(g)\stimes_{\D}c(h)\rightarrow c(gh)$
for the tensor product in $\C$, the multiplication will not necessarily
be associative. This non associativity is encoded in a cohomological
obstruction $O_{3}(c)\in Z^{3}(G,inv(Z(\D)))$. The element $M$ belongs
to $C^{2}(G,inv(Z(\D)))$, and should satisfy $\del M=O_{3}(c)$ (that
is- it should be a {}``solution'' to the obstruction $O_{3}(c)$).
If we change $M$ by a coboundary, we get an equivalent solution.
Therefore, the choice of $M$ is equivalent to choosing an element
from a torsor over $H^{2}(G,inv(Z(\D)))$. Given $c$ and $M$, we
still have one more obstruction in order to furnish from $\C$ a fusion
category. This obstruction is the commutativity of the pentagon diagram,
and is given by a four cocycle $O_{4}(c,M)\in Z^{4}(G,k^{*})$. The
element $\alpha$ belongs to $C^{3}(G,k^{*})$, and should satisfy
$\del\alpha=O_{4}(c,M)$. We think of $\alpha$ as a solution to the
obstruction $O_{4}(c,M)$. Again, if we change $\alpha$ by a coboundary,
we will get an equivalent solution. Therefore, the choice of $\alpha$
can be seen as a choice from a torsor over $H^{3}(G,k^{*})$.

We shall write $\C=\D(G,c,M,\alpha)$ to indicate the fact that $\C$
is an extension of $\D$ by $G$ given by the extension data $(c,M,\alpha)$,
and we shall assume from now on that $\C=\D(G,c,M,\alpha)$.

In this paper we shall classify module categories over $\C$ in terms
of module categories over $\D$ and the extension data $(\C,M,\alpha)$.

Our classification of module categories will follow the lines of the
classification of \cite{ENO2}. We will begin by proving the following
structure theorem for module categories over $\C$.
\begin{thm}
\label{pro:structure}Let $\el$ be an indecomposable module category
over $\C$. There is a subgroup $H<G$, and an indecomposable $\C_{H}=\bigoplus_{a\in H}\C_{a}$
module category $\N$ which remains indecomposable over $\D$ such
that $\lmodule\cong Ind_{\qC_{H}}^{\qC}(\nmodule)\triangleq\qC\boxtimes_{\qC_{H}}\nmodule$.
\end{thm}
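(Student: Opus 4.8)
The plan is to carry out the categorical analogue of Clifford theory, in the spirit of the extension classification of \cite{ENO2}. Restrict the $\C$-module category $\el$ to $\D=\C_{e}$. Since module categories over a fusion category are finite semisimple, we may write $\el|_{\D}\cong\bigoplus_{i\in I}\el_{i}$ with $I$ finite and each $\el_{i}$ an indecomposable $\D$-module category. The first step is to build an action of $G$ on the finite set $I$. For each $g\in G$ the $\C$-module structure of $\el$ restricts to a $\D$-module functor $a_{g}\colon\C_{g}\dtimes\el\to\el$, where $\C_{g}\dtimes\el$ carries the $\D$-action coming from the left $\D$-module structure of the bimodule $\C_{g}$. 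Because the grading is faithful, each $\C_{g}$ is an invertible $\D$-bimodule and the multiplication induces an equivalence $\C_{g^{-1}}\dtimes\C_{g}\cong\C_{e}=\D$; composing $a_{g}$ with $a_{g^{-1}}$ and invoking the module associativity constraint of $\el$ shows that $a_{g}$ is itself an equivalence of $\D$-module categories. Since $\C_{g}\in Pic(\D)$, the summand $\C_{g}\dtimes\el_{i}$ of $\C_{g}\dtimes\el\cong\bigoplus_{i}\C_{g}\dtimes\el_{i}$ is indecomposable, so $a_{g}$ carries it isomorphically onto exactly one summand $\el_{j}$ of $\el$; set $g\cdot i:=j$. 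Compatibility of the multiplication isomorphisms $\C_{g}\dtimes\C_{h}\cong\C_{gh}$ with the module structures gives $g\cdot(h\cdot i)=(gh)\cdot i$, so this is a genuine $G$-action.

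Second, indecomposability of $\el$ as a $\C$-module forces the $G$-action on $I$ to be transitive. Indeed, a nontrivial $G$-stable partition $I=I_{1}\sqcup I_{2}$ would yield $\bigoplus_{i\in I_{1}}\el_{i}$ and $\bigoplus_{i\in I_{2}}\el_{i}$: these are $\D$-submodules stable under every $a_{g}$, with constraints restricted from those of $\el$, hence $\C$-submodules, contradicting indecomposability. Now fix $i_{0}\in I$, put $H=\mathrm{Stab}_{G}(i_{0})<G$, and set $\N:=\el_{i_{0}}$. For $a\in H$ the functor $a_{a}$ preserves $\N$, so $\N$ is a module category over $\C_{H}=\bigoplus_{a\in H}\C_{a}$ with all coherence data restricted from $\el$; and $\N$ is indecomposable over $\D$ by construction, hence also over $\C_{H}$.

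Third, I must show the canonical $\C$-module functor $F\colon Ind_{\C_{H}}^{\C}(\N)=\C\boxtimes_{\C_{H}}\N\to\el$ induced by the action is an equivalence. Write $\C=\bigoplus_{gH\in G/H}\C_{gH}$ as a right $\C_{H}$-module, with $\C_{gH}:=\bigoplus_{a\in gH}\C_{a}$, so that $\C\boxtimes_{\C_{H}}\N\cong\bigoplus_{gH\in G/H}\C_{gH}\boxtimes_{\C_{H}}\N$. Invertibility of $\C_{g}$ makes the multiplication $\C_{g}\dtimes\C_{H}\to\C_{gH}$ an equivalence of $(\D,\C_{H})$-bimodules, whence $\C_{gH}\boxtimes_{\C_{H}}\N\cong\C_{g}\dtimes(\C_{H}\boxtimes_{\C_{H}}\N)\cong\C_{g}\dtimes\N$. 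Under $F$, this summand maps by the restriction of the equivalence $a_{g}$ isomorphically onto $\el_{g\cdot i_{0}}$. By orbit--stabilizer, $gH\mapsto g\cdot i_{0}$ is a bijection $G/H\to I$, so these identifications assemble into an equivalence $\C\boxtimes_{\C_{H}}\N\cong\bigoplus_{i\in I}\el_{i}=\el$ of $\D$-module categories. Since $F$ is at the same time a $\C$-module functor, it is an equivalence of $\C$-module categories, which is what is claimed.

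The main obstacle is executing the first step rigorously: verifying that each $\C_{g}$ acts on $\el$ ``invertibly enough'' to permute the indecomposable $\D$-summands and that these permutations compose, which amounts to carefully juggling the various $\D$-module structures against the invertibility of the bimodules $\C_{g}$ and the coherent isomorphisms $\C_{g}\dtimes\C_{h}\cong\C_{gh}$ furnished by the grading. The identification $\C_{gH}\boxtimes_{\C_{H}}\N\cong\C_{g}\dtimes\N$ in the last step is the remaining point that needs care.
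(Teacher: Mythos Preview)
Your argument is correct and the Clifford-theoretic setup (decompose $\el$ over $\D$, build the $G$-permutation of the indecomposable summands from invertibility of the $\C_{g}$, use transitivity to extract $H$ and $\N$) coincides with the paper's. The proofs diverge only at the last step, where you must identify $\el$ with $Ind_{\C_{H}}^{\C}(\N)$. You do this by a direct orbit--stabilizer decomposition: writing $\C\boxtimes_{\C_{H}}\N\cong\bigoplus_{gH}\C_{gH}\boxtimes_{\C_{H}}\N\cong\bigoplus_{gH}\C_{g}\dtimes\N$ and matching summands via the equivalences $a_{g}$. The paper instead invokes Ostrik's theorem through its Corollary~\ref{cor1}: pick any nonzero $X\in\N$, observe that $\underline{Hom}_{\C}(X,X)$ is supported on $\C_{H}$ because $H$ is the stabilizer of $\N$, and conclude $\el\cong mod_{\C}\text{-}A\cong Ind_{\C_{H}}^{\C}(mod_{\C_{H}}\text{-}A)$ with $A=\underline{Hom}_{\C}(X,X)$. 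Your route is more elementary and avoids the internal-Hom preliminaries of Section~\ref{sec:Preliminaries}; the paper's route has the payoff of immediately producing the algebra $A$ inside $\C_{H}$ realizing $\N$, which is exactly the data exploited later in Section~\ref{sec:An-intrinsic-description}.
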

This proposition enables us to reduce the classification of $\C$-module
categories to the classification of $\C_{H}$-module categories which
remains indecomposable over $\D$, where $H$ varies over subgroups
of $G$.

In order to classify such categories we will go, in some sense, the
other way around. We will begin with an indecomposable $\D$-module
category $\N$, and we will ask how can we equip $\N$ with a structure
of a $\C_{H}$ module category.

As in the classification in \cite{ENO2}, the answer will also be
based upon choosing solutions to certain obstruction (in case it is
possible). We will begin with the observation, in Section \ref{sec:zero},
that we have a natural action of $G$ on the set of (equivalence classes
of) indecomposable $\qD$-module categories. This action is given
by the following formula \[
g\cdot\N=\C_{g}\stimes_{\D}\N.\]
 If $\N$ has a structure of a $\C_{H}$-module category, then the
action of $\C_{H}$ on $\N$ will give an equivalence of $\qD$-module
categories $h\cdot\nmodule\cong\nmodule$ for every $h\in H$. In
other words- $\N$ will be $H$-invariant. We may think of the fact
that $\N$ should be $H$-invariant as the {}``zeroth obstruction''
we have in order to equip $\N$ with a structure of a $\C_{H}$-module
category.

In case $\N$ is $H$-invariant, we choose equivalences $\psi_{a}:\C_{a}\stimes_{\D}\N\rightarrow\N$
for every $a\in H$. We would like these equivalences to give us a
structure of a $\C_{H}$-module category on $\N$. As one might expect,
not every choice of equivalences will do that. If $\N$ has a structure
of a $\C_{H}$-module category, we will see in Section \ref{sec:The-first-two}
that we have a natural action of $H$ on the group $\Gamma=Aut_{\D}(\N)$.
In case we only know that $\N$ is $H$-invariant, we only have an
\textit{outer} action of $H$ on $\Gamma$ (i.e. a homomorphism $\rho:H\rightarrow Out(\Gamma)$).
The first obstruction will thus be the possibility to lift this outer
action to a proper action.

Once we overcome this obstruction (and choose a lifting $\Phi$ for
the outer action), our second obstruction will be the fact that the
two functors \[
F_{1},F_{2}:\C_{a}\stimes_{\D}\C_{b}\stimes_{\D}\stimes\N\rightarrow\N\]
 defined by \[
F_{1}(X\stimes Y\stimes N)=(X\otimes Y)\otimes N\]
 and \[
F_{2}(X\stimes Y\stimes N)=X\otimes(Y\otimes N)\]
 should be isomorphic. We will see that this obstruction is given
by a certain two cocycle $O_{2}(\N,c,H,M,\Phi)\in Z^{2}(H,Z(Aut_{\D}(\N)))$.
A solution for this obstruction is an element $v\in C^{1}(H,Z(Aut_{\D}(\N)))$
that should satisfy $\del v=O_{2}(\N,c,H,M,\Phi)$.

Our last obstruction will be the fact that the above functors should
be not only isomorphic, but they should be isomorphic in a way which
will make the pentagon diagram commutative. This obstruction is encoded
by a three cocycle $O_{3}(\N,c,H,M,\Phi,v,\alpha)\in Z^{3}(H,k^{*})$.
A solution $\beta$ for this obstruction will be an element of $C^{2}(H,k^{*})$
such that $\del\beta=O_{3}(\N,c,H,M,\Phi,v,\alpha)$.

We can summarize our main result in the following theorem:
\begin{thm}
\label{thm:main} An indecomposable module category over $\C$ is
given by a tuple $(\N,H,\Phi,v,\beta)$, where $\N$ is an indecomposable
module category over $\D$, $H$ is a subgroup of $G$ which acts
trivially on $\N$, $\Phi:H\rightarrow Aut(Aut_{\D}(\N))$ is a homomorphism,
$v$ belongs to a torsor over $H^{1}(H,Z(Aut_{\D}(\N)))$, and $\beta$
belongs to a torsor over $H^{2}(H,k^{*})$.
\end{thm}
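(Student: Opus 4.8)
The plan is to prove Theorem~\ref{thm:main} by combining the structure theorem (Theorem~\ref{pro:structure}) with a step-by-step analysis of the obstruction theory sketched in the introduction. Theorem~\ref{pro:structure} already tells us that every indecomposable $\C$-module category is induced from a pair $(H,\N)$ where $H<G$ and $\N$ is an indecomposable $\C_H$-module category which remains indecomposable over $\D$. So the real content is to classify, for a fixed subgroup $H$, the possible $\C_H$-module structures on a fixed indecomposable $\D$-module category $\N$, and then to check that two such data give equivalent induced $\C$-module categories precisely when they agree as tuples. I would organize the proof in four stages, corresponding to the zeroth, first, second and third obstructions.

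First I would establish the zeroth obstruction: if $\N$ carries a $\C_H$-module structure, then restricting the action along $\C_a \stimes_\D \N \to \N$ for $a\in H$ gives equivalences of $\D$-module categories $a\cdot\N \cong \N$, so $\N$ must be $H$-invariant; this is the condition that $H$ ``acts trivially on $\N$'' in the statement. Conversely, assume $\N$ is $H$-invariant and choose equivalences $\psi_a:\C_a\stimes_\D\N\to\N$. Second, I would analyze how the associativity constraint of $\C_H$ forces a coherence datum on the $\psi_a$: comparing $(X\otimes Y)\otimes N$ with $X\otimes(Y\otimes N)$ produces, for each pair $a,b\in H$, an element of $Aut_\D(\N)$, well-defined only up to inner automorphisms because the $\psi_a$ are only determined up to $Aut_\D(\N)$. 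This yields the outer action $\rho:H\to Out(Aut_\D(\N))$ which is canonically attached to the pair $(\N,H)$ via the extension data $(c,M)$, and the first obstruction is the existence of a lift $\Phi:H\to Aut(Aut_\D(\N))$ of $\rho$; the set of lifts, when nonempty, is a torsor over $H^1(H, Z(Aut_\D(\N)))$ in the usual way (this will reappear combined with the next stage). Here I would invoke the general nonabelian cohomology formalism for lifting outer actions, exactly as in \cite{ENO2} for the analogous $Pic(\D)$-valued statement.

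Third, fixing a lift $\Phi$, I would show that the failure of the two functors $F_1,F_2:\C_a\stimes_\D\C_b\stimes_\D\N\to\N$ to be isomorphic (compatibly with $\Phi$) is measured by a $2$-cocycle $O_2(\N,c,H,M,\Phi)\in Z^2(H,Z(Aut_\D(\N)))$; the point is that once $\Phi$ is chosen the ambiguity in comparing $F_1$ and $F_2$ lands in the center $Z(Aut_\D(\N))$, and the pentagon-type constraint among triples $a,b,c\in H$ shows it is a cocycle. A solution is $v\in C^1(H,Z(Aut_\D(\N)))$ with $\del v = O_2$, and changing $v$ by a coboundary (equivalently, re-choosing the $\psi_a$) gives an equivalent structure, so the choice of $v$ is a torsor over $H^1(H,Z(Aut_\D(\N)))$. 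Fourth and last, with $\Phi$ and $v$ fixed we have isomorphisms between $F_1$ and $F_2$, and the obstruction to choosing them so that the pentagon diagram for $\C_H$ acting on $\N$ commutes is a $3$-cocycle $O_3(\N,c,H,M,\Phi,v,\alpha)\in Z^3(H,k^*)$ (this is where $\alpha$ enters, since the pentagon in $\C$ already carries $\alpha$); a solution $\beta\in C^2(H,k^*)$ with $\del\beta=O_3$ completes the module structure, and re-choosing the structure isomorphisms alters $\beta$ by a coboundary, giving the torsor over $H^2(H,k^*)$. Finally I would check that $Ind_{\C_H}^{\C}$ sends non-equivalent tuples to non-equivalent $\C$-module categories and that every indecomposable $\C$-module category arises this way, which is where Theorem~\ref{pro:structure} is used; care is needed because different subgroups $H$ could a priori give the same induced category, so one must verify that $H$ is recovered from $\el$ (e.g.\ as a stabilizer-type invariant of the $\D$-module structure of a summand of $\el$).

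The main obstacle I expect is the bookkeeping in stage three and four: writing $O_2$ and $O_3$ explicitly in terms of the chosen isomorphisms $c(g)\stimes_\D c(h)\to c(gh)$, the element $M$, the lift $\Phi$, the solution $v$, and the associator $\alpha$, and verifying that they are genuine cocycles and that the coboundary ambiguities match the re-choices of $\psi_a$ and of the structure isomorphisms, is a lengthy but essentially mechanical diagram chase — the same kind that appears in \cite{ENO2}. The conceptually delicate point is ensuring the obstructions are well-defined independently of the auxiliary choices that were \emph{not} part of the tuple (in particular the $\psi_a$ and the background trivializations of the $\D$-bimodule equivalences $c(a)\stimes_\D\N\cong\N$), and I would handle this by checking at each stage that a change in such a choice shifts the current obstruction class by a coboundary and leaves the previously-fixed data consistent.
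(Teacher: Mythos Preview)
Your approach is essentially the paper's: reduce via Theorem~\ref{pro:structure} to $\C_H$-module structures on a fixed $H$-invariant indecomposable $\D$-module category $\N$, then run the obstruction ladder (outer action $\rho$, lift $\Phi$, cocycle $O_2$ with solution $v$, cocycle $O_3$ with solution $\beta$). The paper carries this out in Sections~\ref{sec:zero}--5 and declares the proof of Theorem~\ref{thm:main} finished at the end of the third-obstruction section.

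Two points to flag. First, your parenthetical claim that ``the set of lifts [$\Phi$], when nonempty, is a torsor over $H^1(H,Z(Aut_\D(\N)))$'' is not right: two lifts of $\rho$ differ by conjugation by elements of $\Gamma=Aut_\D(\N)$, not of $Z(\Gamma)$, so this is not an $H^1(H,Z(\Gamma))$-torsor. In the paper (and in the statement of the theorem) $\Phi$ is simply a chosen lift; the $H^1(H,Z(\Gamma))$-torsor structure enters only at the next stage, for $v$, exactly as you say in stage three. Second, and more important, your final step --- checking that $Ind_{\C_H}^{\C}$ sends non-equivalent tuples to non-equivalent $\C$-module categories and that $H$ is recovered from $\el$ --- is \emph{not} part of Theorem~\ref{thm:main} and is in fact false as stated: distinct tuples can and do give equivalent $\C$-module categories (conjugate $H$'s, for instance, and subtler identifications among $(\Phi,v,\beta)$). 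That equivalence relation is the content of Theorem~\ref{thm:2} and Section~\ref{sec:The-isomorphism-condition}, not of Theorem~\ref{thm:main}, so you should drop that step here rather than try to prove an injectivity that does not hold.
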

We shall denote the indecomposable module category which corresponds
to the tuple $(\N,H,\Phi,v,\beta)$ by $\M(\N,H,\Phi,v,\beta)$. In
order to classify module categories, we need to give not only a list
of all indecomposable module categories, but also to explain when
does two elements in the list define equivalent module categories.
We will see in Section \ref{sec:The-isomorphism-condition} that if
$\M(\N,H,\Phi,v,\beta)$ is any indecomposable module category, $g\in G$
is an arbitrary element and $F:\C_{g}\stimes_{\D}\N\equiv\N'$ is
an equivalence of $\D$-module categories (where $\N'$ is another
indecomposable $\D$-module category), then $F$ gives rise to a tuple
$(\N',gHg^{-1},\Phi',v',\beta')$ which defines an equivalent $\C$-module
category. Our second main result is the following:
\begin{thm}
\label{thm:2} Two tuples $(\N,H,\Phi,v,\beta)$ and $(\N',H',\Phi',v',\beta')$
determine equivalent $\C$-module categories if and only if the second
tuple is defined by the first tuple and by some equivalence $F$ as
above.
\end{thm}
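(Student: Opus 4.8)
The plan is to prove the two directions separately. The "if" direction is essentially a consequence of the constructions of the previous sections: if $(\N',H',\Phi',v',\beta')$ is obtained from $(\N,H,\Phi,v,\beta)$ via an equivalence $F:\C_g\dtimes\N\to\N'$ as described before the statement, then one has to exhibit an explicit equivalence $\M(\N,H,\Phi,v,\beta)\cong\M(\N',H',\Phi',v',\beta')$ of $\C$-module categories. I would build this equivalence from $F$ together with the induction equivalence: conjugation by $g$ intertwines $\C_H$-module structures with $\C_{gHg^{-1}}$-module structures (since $\C_g\dtimes(-)$ sends $\C_H$-modules to $\C_{gHg^{-1}}$-modules), and $\mathrm{Ind}_{\C_H}^{\C}$ is well behaved under this. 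The bulk of the work here is bookkeeping: one checks that the tautological functor respects the associativity constraints, which amounts to unwinding that $\Phi',v',\beta'$ were \emph{defined} precisely so that this holds. I expect this direction to be routine once the definitions of the induced data are recalled.

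The "only if" direction is the substantive one. Suppose $G:\M(\N,H,\Phi,v,\beta)\cong\M(\N',H',\Phi',v',\beta')$ is an equivalence of $\C$-module categories. By Theorem \ref{pro:structure} both sides are induced from data over $\C_H$ and $\C_{H'}$ respectively, and the first step is to recover the subgroup $H$ intrinsically from $\M(\N,H,\Phi,v,\beta)$: the point is that $H$ is determined by which homogeneous components $\C_a$ act by an equivalence that preserves (the indecomposable summand coming from) $\N$, equivalently by the support of the $G$-grading induced on $\mathrm{End}$ of the module category. An equivalence $G$ of $\C$-module categories must carry the $\D$-module subcategory generated by $\N$ to a $G$-conjugate — more precisely, restricting $G$ to $\D$ and using indecomposability of $\N$ and $\N'$ over $\D$, one gets an element $g\in G$ and an equivalence $F:\C_g\dtimes\N\to\N'$ of $\D$-module categories, and necessarily $H'=gHg^{-1}$. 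This is where the decomposition theorem and the $G$-action on $\D$-module categories from Section \ref{sec:zero} do the real work.

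Having produced $g$ and $F$, it remains to show that $(\Phi',v',\beta')$ agree with the data $(\Phi'',v'',\beta'')$ that $F$ induces from $(\Phi,v,\beta)$, \emph{as elements of the respective torsors}. Here I would argue that $G$ composed with the equivalence constructed in the "if" direction (applied to $F$) gives a $\C$-module autoequivalence of $\M(\N',gHg^{-1},\Phi'',v'',\beta'')$ that is the identity on the underlying $\D$-module category $\N'$; and then I would show that such an autoequivalence forces $\Phi''=\Phi'$ on the nose (it acts as identity on $\Gamma=Aut_\D(\N')$ and hence on the homomorphism $H'\to Aut(\Gamma)$), $v''=v'$ up to a coboundary (the discrepancy is a $1$-coboundary in $C^1(H',Z(\Gamma))$ measured by the natural isomorphism realizing the autoequivalence), and $\beta''=\beta'$ up to a coboundary in $C^2(H',k^*)$ (the associativity-constraint discrepancy). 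The main obstacle, and the step requiring genuine care, is this last part: making precise the claim that a module autoequivalence which is trivial on the $\D$-level contributes exactly a coboundary in each of the two cohomological layers, so that the classes of $v',\beta'$ are well defined and matched. Concretely this is a cocycle-chasing argument parallel to the obstruction-theoretic computations of Sections \ref{sec:The-first-two} and \ref{sec:The-isomorphism-condition}, and I would organize it as a short lemma isolating "autoequivalences trivial on $\N$ act trivially on the torsors modulo coboundaries," from which Theorem \ref{thm:2} follows immediately.
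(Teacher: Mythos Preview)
Your proposal is correct and is essentially the paper's own argument in Section~\ref{sec:The-isomorphism-condition}: one extracts from the given $\C$-equivalence its restriction $t_F:\C_g\dtimes\N\to\N'$ to a $\D$-indecomposable summand and then reads off how the second tuple is determined by the first and by $t_F$. The only differences are organizational---the paper computes the transported data directly for general $t_F$ (obtaining $\Phi'=c_{t_F}\Phi\,c_{t_F}^{-1}$, $v'/v=r$ with $r(a)=t_F\Phi(a)(t_F)^{-1}$, and $\beta'$ determined up to coboundary), while you compose with the inverse of the ``if'' equivalence to reduce to $t=\mathrm{id}$---and one small slip: your composite $G'$ is not an \emph{auto}equivalence of $\M(\N',H',\Phi'',v'',\beta'')$ but a $\C$-equivalence from that category to $\M(\N',H',\Phi',v',\beta')$, two a priori different $\C$-structures on the same induced category, so your lemma should be phrased for such an equivalence (and then indeed $\Phi'=\Phi''$ and $v'=v''$ exactly, with $\beta',\beta''$ cohomologous via the scalar freedom in the structural isomorphisms of $G'$).
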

We shall prove Theorem \ref{thm:2} in Section \ref{sec:The-isomorphism-condition}.
We will also decompose this condition into a few simpler ones: we
will see, for example, by considering the case $g=1$, that we can
change $\Phi$ to be $t\Phi t^{-1}$, where $t$ is any conjugation
automorphism of $Aut_{\D}(\N)$.

In Section \ref{sec:Equivaraintization} we will describe the category
of functors $Fun_{\C}(\N,\M)$ where $\N$ and $\M$ are two module
categories over $\C$. We will prove a Mackey type decomposition theorem, and we will also see that we can view this category
as the equivariantization of the category $Fun_{\D}(\N,\M)$ with
respect to an action of $G$. We will also be able to prove the following
criterion of $\C$ to be group theoretical: $\qC$ is a group theoretical
if and only if there is a pointed $\qD$-module category $\nmodule$
(i.e., $\qD_{\nmodule}^{*}$ is pointed), stable under the $G$-action,
i.e., for every $g\in G$, $\qC_{g}\dtimes\nmodule\cong\nmodule$
as $\qD$-module categories.
We shall also explain why this is a reformulation of the criterion which appears in \cite{GNN}.

A theorem of Ostrik says that any indecomposable module category over
a fusion category $\D$ is equivalent to a category of the form $Mod_{\D}-A$,
of right $A$-modules in the category $\D$, where $A$ is some semisimple
indecomposable algebra in the category $\D$. In other words- any
module category has a description by objects which lie inside the
fusion category $\D$. In Section \ref{sec:An-intrinsic-description}
we will explain how we can understand the obstructions and their solutions,
and also the functor categories, by intrinsic description; that is-
by considering algebras and modules inside the categories $\D$ and
$\C$.

This description will be much more convenient for calculations. It
will also enables us to view the first and the second obstruction
in a unified way. Indeed, in Section \ref{sec:An-intrinsic-description}
we will show that we have a natural short exact sequence \[
1\rightarrow\Gamma\rightarrow\Lambda\rightarrow H\rightarrow1\]
 and that a solution for the first two obstructions is equivalent
to a choice of a splitting of this sequence (and therefore, we can
solve the first two obstructions if and only if this sequence splits).
We will also show, following the results of Section \ref{sec:An-intrinsic-description},
that two splittings which differ by conjugation by an element of $\Gamma$
will give us equivalent module categories.

In Section \ref{sec:An-example:-classification} we shall give a detailed
example. We will consider the Tambara Yamagami fusion categories,
$\C=\mathcal{TY}(A,\chi,\tau)$. In this case $\C$ is an extension
of the category $Vec_{A}$, where $A$ is an abelian group, by the
group $\Z_{2}$.

\begin{rem*} During the final stages of the writing of this paper it came to our attention that Cesar Galindo is working on a paper with similar results.
We would like to remark that our results and his were obtained independently.
\end{rem*}

\section{Preliminaries\label{sec:Preliminaries}}

In this section, $\C$ will be a general fusion category and $\D$
a fusion subcategory of $\C$. We recall some basic facts about module
categories over $\C$ and $\D$. For a more detailed discussion on
these notions, we refer the reader to \cite{O1} and to \cite{ENO}.
Let $\N$ be a module category over $\C$. If $X,Y\in Ob\N$, then
the \textit{internal hom} of $X$ and $Y$ is the unique object of
$\C$ which satisfies the formula \[
Hom_{\C}(W,\underline{Hom}_{\C}(X,Y))=Hom_{\N}(W\otimes X,Y)\]
 for every $W\in Ob\C$. For every $X\in Ob\N$ the object $\underline{Hom}_{\C}(X,X)$
has a canonical algebra structure. We say that $X$ \textit{generates}
$\N$ (over $\C$) if $\N$ is the smallest sub $\C$-module-category
of $\N$ which contains $X$. For every algebra $A$ in $\C$, $mod_{\C}-A$,
the category of right $A$-modules in $\C$, has a structure of a
left $\C$-module category.

A theorem of Ostrik says that all module categories are of this form:
\begin{thm}
(see \cite{O1}) Let $\N$ be a module category, and let $X$ be a
generator of $\N$ over $\C$. We have an equivalence of $\C$-module
categories $\N\cong Mod_{\C}-\underline{Hom}(X,X)$ given by $F(Y)=\underline{Hom}(X,Y)$.
\end{thm}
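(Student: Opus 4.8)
The plan is to prove directly that $F(Y)=\underline{Hom}(X,Y)$ is a fully faithful $\C$-module functor which is essentially surjective, by reducing every assertion to the ``free'' objects $W\otimes X$ (for objects $W$ of $\C$), where it can be checked by hand from the defining adjunction, and then propagating to all of $\N$ using that $X$ generates $\N$ and that $\N$ is semisimple.

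First I would check that $F$ is well defined, i.e. that $\underline{Hom}(X,Y)$ is naturally a right module over the algebra $A=\underline{Hom}(X,X)$: the action is the composition morphism $\underline{Hom}(X,Y)\otimes\underline{Hom}(X,X)\to\underline{Hom}(X,Y)$, and the module axioms follow from the unit and associativity of composition of internal homs. Next I would equip $F$ with the structure of a $\C$-module functor: using the defining adjunction $Hom_\C(U,\underline{Hom}(X,W\otimes Y))=Hom_\N(U\otimes X,W\otimes Y)$ together with rigidity of $\C$, one obtains a natural isomorphism $F(W\otimes Y)\cong W\otimes F(Y)$, whose coherence with the associativity and unit constraints is a diagram chase with the same adjunction. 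In particular $F(W\otimes X)\cong W\otimes A$ as right $A$-modules, the free module on $W$.

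For full faithfulness I would first treat $Y=W\otimes X$: on one side $Hom_\N(W\otimes X,Z)\cong Hom_\C(W,\underline{Hom}(X,Z))=Hom_\C(W,F(Z))$ by definition of the internal hom, and on the other side $Hom_{A}(F(W\otimes X),F(Z))\cong Hom_\C(W,F(Z))$ by the free--forgetful adjunction for $A$-modules; one checks these identifications intertwine the map induced by $F$. Since $X$ generates $\N$ and $\N$ is semisimple, every object $Y$ of $\N$ is a direct summand of some $W\otimes X$, so full faithfulness of $F$ on all of $\N$ follows by splitting the corresponding idempotents on both sides and using additivity of $F$. For essential surjectivity, every right $A$-module $M$ is a quotient---hence, by semisimplicity, a direct summand---of the free module $F(W\otimes X)\cong W\otimes A$ with $W$ the underlying object of $M$; transporting the corresponding idempotent of $End_{A}(F(W\otimes X))\cong End_\N(W\otimes X)$ through the (now fully faithful) $F$ realizes $M$ as $F$ applied to a summand of $W\otimes X$. (Alternatively one can write down a quasi-inverse $G(M)=M\otimes_{A}X$, where $X$ is a left $A$-module via the evaluation $A\otimes X\to X$, and check that the unit and counit of the adjunction $(G,F)$ are isomorphisms, again by reduction to the objects $W\otimes X$.) I expect the main obstacle to be organizational rather than conceptual: pinning down the module-functor structure on $F$ and the compatibility of the various adjunction isomorphisms with it, and being careful to invoke the hypothesis that $X$ generates $\N$ exactly at the step where arbitrary objects are replaced by free ones; once that is in place, semisimplicity of $\N$ does the rest.
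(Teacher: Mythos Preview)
Your proof sketch is correct and follows the standard argument (essentially the one in Ostrik's original paper \cite{O1}): reduce full faithfulness and essential surjectivity to the ``free'' objects $W\otimes X$ via the defining adjunction of internal hom and the free--forgetful adjunction for $A$-modules, then use semisimplicity and the generating hypothesis to propagate to all of $\N$.

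Note, however, that the present paper does \emph{not} actually prove this theorem: it is quoted as a known result with the citation ``(see \cite{O1})'' and no proof is given. So there is no ``paper's own proof'' to compare against; the authors simply defer to Ostrik. Your sketch supplies exactly the argument that reference contains, so there is nothing to correct or contrast --- you have reconstructed the intended proof.
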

Next, we recall the definition of the induced module category. If
$\N$ is a $\D$-module category, $Ind_{\D}^{\C}(\N)$ is a module
category over $\C$ which satisfies Frobenius reciprocity. This means
that for every $\C$-module category $\mathcal{R}$ we have that \[
Fun_{\C}(Ind_{\D}^{\C}(\N),\mathcal{R})\cong Fun_{\D}(\N,\mathcal{R}).\]

The next lemma proves that the induced module category always exists.
It will also gives us some idea about how the induced module category
{}``looks like''.
\begin{lem}
Suppose that $\N\cong mod_{\D}-A$ for some algebra $A\in Ob\D$.
Then $A$ can also be considered as an algebra in $\C$, and $Ind_{\D}^{\C}(\N)\cong mod_{\C}-A$.\end{lem}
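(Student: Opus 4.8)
The plan is to show directly that $mod_{\C}-A$ satisfies the universal property (Frobenius reciprocity) that characterizes $Ind_{\D}^{\C}(\N)$; this both proves the lemma and establishes, as promised, the existence of the induced module category. First I would record the easy point: since $\D$ is a fusion subcategory of $\C$ its tensor product is the restriction of that of $\C$, so $(A,m,u)$ is verbatim an algebra in $\C$ — there is nothing to check. Moreover, since $\N\cong mod_{\D}-A$ is a (finite semisimple) module category, $A$ is a separable algebra: a bimodule splitting of $m\colon A\otimes A\to A$ exists in $\D$, hence in $\C$, so that $mod_{\C}-A$ is again a finite semisimple $\C$-module category.

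The core of the argument is a double application of the standard description (\cite{O1,ENO}) of module functors out of a module category of the form $mod_{\mathcal{E}}-B$: for any fusion category $\mathcal{E}$, any separable algebra $B\in\mathcal{E}$, and any left $\mathcal{E}$-module category $\mathcal{R}$, evaluation on the regular module $B\in mod_{\mathcal{E}}-B$ gives an equivalence
\[
Fun_{\mathcal{E}}(mod_{\mathcal{E}}-B,\ \mathcal{R})\ \xrightarrow{\ \sim\ }\ \mathrm{Mod}_{B}(\mathcal{R}),
\]
natural in $\mathcal{R}$, where $\mathrm{Mod}_{B}(\mathcal{R})$ denotes the category of $B$-module objects of $\mathcal{R}$ (pairs $(R,\rho\colon B\otimes R\to R)$ satisfying the module axioms with respect to the $\mathcal{E}$-action). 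Applying this with $\mathcal{E}=\C$, $B=A$ gives $Fun_{\C}(mod_{\C}-A,\mathcal{R})\cong\mathrm{Mod}_{A}(\mathcal{R})$. Now comes the one genuinely category-specific observation: the structure of an $A$-module object uses only the tensoring of the object $A\in\D$ with objects of $\mathcal{R}$, hence only the underlying $\D$-module structure on $\mathcal{R}$; thus $\mathrm{Mod}_{A}(\mathcal{R})=\mathrm{Mod}_{A}(\mathcal{R}|_{\D})$. Applying the displayed equivalence again, this time with $\mathcal{E}=\D$ and $B=A$, yields $\mathrm{Mod}_{A}(\mathcal{R}|_{\D})\cong Fun_{\D}(mod_{\D}-A,\mathcal{R}|_{\D})=Fun_{\D}(\N,\mathcal{R})$. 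Composing,
\[
Fun_{\C}(mod_{\C}-A,\ \mathcal{R})\ \cong\ Fun_{\D}(\N,\ \mathcal{R})
\]
naturally in the $\C$-module category $\mathcal{R}$. This is exactly the Frobenius reciprocity defining $Ind_{\D}^{\C}(\N)$, so $mod_{\C}-A$ realizes $Ind_{\D}^{\C}(\N)$; in particular the induced module category exists, and the lemma follows.

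The main obstacle is entirely in the bookkeeping rather than in any new idea: one must pin down the left/right conventions consistently — recall that $mod_{\C}-A$ is the category of \emph{right} $A$-modules viewed as a \emph{left} $\C$-module category, so module functors out of it are classified by \emph{left} $A$-module objects — and one must verify that the evaluation-at-the-regular-module equivalence is natural in $\mathcal{R}$ with respect to $\C$-module functors, which is what makes the composition above well defined and the final identification legitimate. If one wishes to avoid even this machinery, an alternative is to construct $Ind_{\D}^{\C}(\N)$ concretely as the relative tensor product $\C\dtimes\N$ and to write down the comparison functor $mod_{\C}-A\to\C\dtimes\N$ directly — by Ostrik's theorem it amounts to exhibiting an $A$-module object in $\C\dtimes\N$, namely $\one\boxtimes X$ for $X$ a generator of $\N$ — and then to check it is an equivalence by an internal-Hom computation; the universal-property route above is shorter and has the advantage of yielding the existence statement at the same time.
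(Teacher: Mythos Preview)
Your argument is correct and establishes Frobenius reciprocity, just as the paper does, but by a cleaner route. The paper first chooses a generator $X$ of $\mathcal{R}$ over $\D$, presents $\mathcal{R}$ as $mod_{\C}-B$ with $B=\underline{Hom}_{\C}(X,X)$ (and $\mathcal{R}|_{\D}\cong mod_{\D}-B_{\D}$), invokes Ostrik's equivalence $Fun_{\C}(mod_{\C}-A,mod_{\C}-B)\cong bimod_{\C}\text{-}A\text{-}B$ on both sides, and then exhibits the explicit equivalence $Z\mapsto Z\otimes_{B_{\D}}B$ between $bimod_{\D}\text{-}A\text{-}B_{\D}$ and $bimod_{\C}\text{-}A\text{-}B$. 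Your approach replaces this with the intrinsic description $Fun_{\mathcal{E}}(mod_{\mathcal{E}}-A,\mathcal{R})\cong \mathrm{Mod}_{A}(\mathcal{R})$ and the single observation that, since $A\in\D$, the notion of $A$-module object in $\mathcal{R}$ depends only on $\mathcal{R}|_{\D}$; this avoids choosing a presentation of $\mathcal{R}$ and the attendant verification that the comparison functor is an equivalence. The paper's version has the minor advantage of producing the equivalence explicitly (and the remark following it identifies the induction as $X\mapsto X\otimes_{B_{\D}}B$), while yours is shorter and manifestly natural in $\mathcal{R}$. In substance the two are the same idea: when $\mathcal{R}=mod_{\C}-B$ your $\mathrm{Mod}_{A}(\mathcal{R})$ is exactly the bimodule category the paper uses.
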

\begin{proof}
Let us prove that Frobenius reciprocity holds. For this, we first
need to represent $\mathcal{R}$ in an appropriate way. We choose
a generator $X$ of $\mathcal{R}$ \textbf{over $\D$}. It is easy
to see that $X$ is also a generator over $\C$. Then, by Ostrik's
Theorem we have that $\mathcal{R}\cong mod_{\C}-\underline{Hom}_{\C}(X,X)$
over $\C$, and $\mathcal{R}\cong mod_{\D}-\underline{Hom}_{\D}(X,X)$
over $\D$. If we denote $\underline{Hom}_{\C}(X,X)$ by $B$, then
it is easy to see by the definition of $\underline{Hom}$ that $\underline{Hom}_{\D}(X,X)\cong B_{\D}$,
where $B_{\D}$ is the largest subobject of $B$ which is also an
object of $\D$ (since $\D$ is a fusion subcategory of $\C$, this
is also a subalgebra of $B$). By another theorem of Ostrik (see \cite{O1}),
we know that $Fun_{\C}(mod_{\C}-A,mod_{\C}-B)\cong bimod_{\C}-A-B$.
Using the theorem of Ostrik again, we see that $Fun_{\D}(\N,\mathcal{R})\cong bimod_{\D}(A-B_{\D})$.
One can verify that the functor which sends an $A-B_{\D}$ bimodule
$Z$ in $\D$ to $Z\otimes_{B_{\D}}B$ gives an equivalence between
the two categories.\end{proof}
\begin{rem}
The fact that the induction functor is an equivalence of categories
arise from the fact that for such a $B$, the equivalence between
the categories $mod_{\D}-B_{\D}$ and $mod_{\C}-B$ is given by $X\mapsto X\otimes_{B_{\D}}B$.

One can show that the induced module category is also equivalent to
$\C\stimes_{\D}\N$.
\end{rem}
In particular, we have the following:
\begin{cor}
\label{cor1} Let $\qC$ be a fusion category and let $\qD$ be a
fusion subcategory of $\qC$. Let $\N$ be a module category over
$\C$. Suppose that $X$ is a generator of $\N$ over $\C$, and that
the algebra $A=\underline{Hom}(X,X)$ is supported on $\D$. Then
$\N\cong Ind_{\D}^{\C}(mod_{\D}-A)$.
\end{cor}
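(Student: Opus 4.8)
The plan is to deduce this directly from Ostrik's theorem together with the Lemma established just above. First I would apply Ostrik's theorem to the generator $X$ of $\N$ over $\C$: since $X$ generates $\N$, the functor $Y\mapsto\underline{Hom}_{\C}(X,Y)$ gives an equivalence of $\C$-module categories $\N\cong mod_{\C}-A$, where $A=\underline{Hom}_{\C}(X,X)$ equipped with its canonical algebra structure.

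Next I would unwind what the hypothesis ``$A$ is supported on $\D$'' provides. By definition this means that $A$, viewed as an object of $\C$, already lies in the fusion subcategory $\D$; equivalently, in the notation used in the proof of the Lemma, $A=A_{\D}$. Since $\D$ is a fusion subcategory of $\C$, the multiplication and unit morphisms of $A$ are morphisms in $\D$, so $A$ is an algebra in $\D$ and the category $mod_{\D}-A$ of right $A$-modules internal to $\D$ is defined and is a module category over $\D$. In particular, this is precisely the situation to which the Lemma applies.

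Now I would invoke the Lemma: for an algebra $A\in Ob\,\D$ one has an equivalence of $\C$-module categories $Ind_{\D}^{\C}(mod_{\D}-A)\cong mod_{\C}-A$. Composing this with the equivalence obtained in the first step yields
\[
\N\cong mod_{\C}-A\cong Ind_{\D}^{\C}(mod_{\D}-A),
\]
which is the assertion.

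There is no genuine obstacle here beyond bookkeeping; the statement is essentially a reformulation of the Lemma in the language of generators and internal hom. The only points requiring a moment's care are to confirm that ``supported on $\D$'' is being used in the sense that forces $\underline{Hom}_{\D}(X,X)\cong A$ on the nose (so that the $\D$-side algebra appearing in the proof of the Lemma coincides with $A$ itself), and to note that every equivalence in sight is an equivalence of module categories and not merely of abelian categories. Both are immediate from the statements of Ostrik's theorem and the Lemma as quoted above.
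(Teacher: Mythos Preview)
Your proof is correct and matches the paper's intent exactly: the paper states this as a corollary with no explicit proof, meaning it is supposed to follow immediately from Ostrik's theorem plus the preceding Lemma, precisely as you have written it out.
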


\section{Decomposition of the module category over the trivial component subcategory.
The zeroth obstruction\label{sec:zero}}

We begin by considering the action of $G$ on $\D$-module categories.
For every $g\in G$, $\C_{g}$ is an invertible $\D$-bimodule category.
Therefore, if $\N$ is an indecomposable $\D$-module category, the
category $\C_{g}\stimes_{\D}\N$ is also indecomposable. It is easy
to see that we get in this way an action of $G$ on the set of (equivalence
classes of) indecomposable $\D$-module categories. Let now $\eL$
be an indecomposable $\C$-module category. We can consider $\eL$
also as a module category over $\D$. We claim the following:
\begin{lem}
As a $\D$-module category, $\eL$ is $G$-invariant.\end{lem}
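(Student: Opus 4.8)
The plan is to show that for every $g \in G$, the $\mathcal{D}$-module category $\mathcal{C}_g \boxtimes_\mathcal{D} \mathcal{L}$ is equivalent to $\mathcal{L}$ as a $\mathcal{D}$-module category. The point is that $\mathcal{L}$, being a $\mathcal{C}$-module category, already carries an action of all of $\mathcal{C} = \bigoplus_{g \in G} \mathcal{C}_g$, not just of $\mathcal{D} = \mathcal{C}_e$, and this extra structure is exactly what provides the required equivalences. First I would use the action functor. For a $\mathcal{C}$-module category $\mathcal{L}$, the action of $\mathcal{C}$ restricts on each graded piece to a $\mathcal{D}$-bimodule functor $\mathcal{C}_g \times \mathcal{L} \to \mathcal{L}$, which by the universal property of the relative tensor product (Deligne product over $\mathcal{D}$) factors through a $\mathcal{D}$-module functor $T_g : \mathcal{C}_g \boxtimes_\mathcal{D} \mathcal{L} \to \mathcal{L}$, $X \boxtimes N \mapsto X \otimes N$.

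Next I would argue that $T_g$ is an equivalence. The natural candidate for an inverse is built from $T_{g^{-1}}$ together with the fact that $\mathcal{C}_g$ is an invertible $\mathcal{D}$-bimodule with inverse $\mathcal{C}_{g^{-1}}$ (since $\mathcal{C}_g \boxtimes_\mathcal{D} \mathcal{C}_{g^{-1}} \cong \mathcal{C}_e = \mathcal{D}$ via the tensor product of $\mathcal{C}$, which is part of the grading data). Concretely, the composite
\[
\mathcal{L} \cong \mathcal{D} \boxtimes_\mathcal{D} \mathcal{L} \cong (\mathcal{C}_g \boxtimes_\mathcal{D} \mathcal{C}_{g^{-1}}) \boxtimes_\mathcal{D} \mathcal{L} \cong \mathcal{C}_g \boxtimes_\mathcal{D} (\mathcal{C}_{g^{-1}} \boxtimes_\mathcal{D} \mathcal{L})
\]
followed by $\mathrm{id} \boxtimes T_{g^{-1}}$ gives a $\mathcal{D}$-module functor $\mathcal{L} \to \mathcal{C}_g \boxtimes_\mathcal{D} \mathcal{L}$, and one checks that the module associativity constraint of $\mathcal{L}$ (the pentagon-type coherence for the $\mathcal{C}$-action) supplies the natural isomorphisms showing this is quasi-inverse to $T_g$. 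Alternatively, and perhaps more cleanly, one can invoke Corollary~\ref{cor1}: writing $\mathcal{L} \cong Mod_\mathcal{C}\text{-}B$ for $B = \underline{Hom}_\mathcal{C}(X,X)$ with $X$ a generator over $\mathcal{C}$, one computes $\mathcal{C}_g \boxtimes_\mathcal{D} \mathcal{L}$ using the intrinsic description and observes it is again equivalent to $Mod_\mathcal{C}\text{-}B$ viewed as a $\mathcal{D}$-module category, since tensoring the $\mathcal{D}$-module structure by the invertible bimodule $\mathcal{C}_g$ corresponds to the shift in grading that is already absorbed by the full $\mathcal{C}$-action.

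The main obstacle I expect is not the existence of the equivalences $T_g$ individually, but making sure the argument genuinely uses only the $\mathcal{D}$-module (not $\mathcal{C}$-module) structure on the target and that the invertibility is established cleanly — i.e. carefully tracking that the quasi-inverse constructed from $T_{g^{-1}}$ and the bimodule inverse really composes to the identity up to coherent isomorphism. This is where the associativity/coherence data of the $\mathcal{C}$-module structure on $\mathcal{L}$ enters, and it is the only nontrivial input; the bookkeeping is routine but must be done. I would therefore present the proof in the form: exhibit $T_g$ via the universal property, exhibit the candidate inverse via invertibility of $\mathcal{C}_g$ as a $\mathcal{D}$-bimodule, and then note that the unit and counit isomorphisms come directly from the module associativity constraint, concluding that $g \cdot \mathcal{L} = \mathcal{C}_g \boxtimes_\mathcal{D} \mathcal{L} \cong \mathcal{L}$ as $\mathcal{D}$-module categories for all $g$, which is precisely $G$-invariance.
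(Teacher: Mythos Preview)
Your proposal is correct, and the underlying idea is the same as the paper's --- use the $\mathcal{C}$-action on $\mathcal{L}$ to produce the required $\mathcal{D}$-equivalences --- but the execution differs. You construct the action functor $T_g:\mathcal{C}_g\boxtimes_\mathcal{D}\mathcal{L}\to\mathcal{L}$ explicitly and then build a quasi-inverse from $T_{g^{-1}}$ together with the bimodule inverse $\mathcal{C}_{g^{-1}}$, invoking the module associativity constraints for the coherence. The paper instead avoids any explicit inverse or coherence check by a purely formal chain of equivalences: it inserts $\mathcal{C}\boxtimes_\mathcal{C}(-)$, uses associativity of relative tensor products, and observes that $\mathcal{C}_g\boxtimes_\mathcal{D}\mathcal{C}\cong\bigoplus_{a\in G}\mathcal{C}_{ga}\cong\mathcal{C}$ as $(\mathcal{D},\mathcal{C})$-bimodule categories (left translation by $g$ just permutes the summands). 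Your route is more explicit about \emph{which} equivalence one obtains (namely the action map itself, which is exactly what is used later in the paper), at the cost of the coherence bookkeeping you flag; the paper's route is a one-line manipulation that sidesteps that bookkeeping entirely but is less transparent about the resulting functor. Either argument is perfectly adequate here.
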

\begin{rem}
For this lemma, we do not need to assume that $\eL$ is indecomposable.\end{rem}
\begin{proof}
We have the following equivalences of $\D$-module categories \[
\C_{g}\stimes_{\D}\eL\cong\C_{g}\stimes_{\D}(\C\stimes_{\C}\eL)\cong\]
 \[
(\C_{g}\stimes_{\D}\C)\stimes_{\C}\eL\cong(\C_{g}\stimes_{\D}\oplus_{a\in G}\C_{a})\stimes_{\C}\el\cong\]
 \[
(\oplus_{a\in G}\C_{ga})\stimes_{\C}\el\cong\C\stimes_{\C}\el\cong\el.\]
 This proves the claim.
\end{proof}
If $H$ is a subgroup of $G$, we have the subcategory $\C_{H}=\bigoplus_{h\in H}C_{h}$
of $\C$, which is an extension of $\D$ by $H$. We claim the following:
\begin{prop}
There is a subgroup $H<G$, and an indecomposable $\C_{H}$ module
category $\N$ which remains indecomposable over $\D$ such that $\el\equiv Ind_{\C_{H}}^{\C}(\N)$. \end{prop}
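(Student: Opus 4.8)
The strategy is to decompose $\el$ over the trivial component $\D$ and then use the $G$-action to organize the decomposition. First I would write $\el$, viewed as a $\D$-module category, as a direct sum of indecomposable $\D$-module categories, $\el = \bigoplus_{i} \N_i$. Since $\el$ is $G$-invariant as a $\D$-module category (by the previous lemma), the $G$-action permutes the equivalence classes $[\N_i]$ appearing in this decomposition, and because $\el$ is indecomposable \emph{over $\C$}, this permutation action must be transitive on the set of isotypic pieces: if it were not, the $\C$-module structure (which is built from the $\C_g \stimes_\D (-)$) would preserve a proper $\C$-submodule. So all the $\N_i$ lie in a single $G$-orbit; fix one of them, call it $\N$, and let $H = \{ g \in G : \C_g \stimes_\D \N \cong \N \text{ as } \D\text{-module categories}\}$ be its stabilizer. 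Then $H < G$ and $\N$ remains indecomposable over $\D$ by construction.

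Next I would produce the algebra witnessing the induction. By Ostrik's theorem, choose a generator $X$ of $\el$ over $\C$; then $\el \cong Mod_\C - A$ where $A = \underline{Hom}_\C(X,X)$. The key point, in view of Corollary \ref{cor1}, is to arrange that $A$ (for a suitable choice of $X$) is supported on $\C_H$ — i.e. $A \cong A_{\C_H}$, the largest subobject of $A$ lying in $\C_H$. To see this, I would use the $G$-grading: $A = \bigoplus_{g \in G} A_g$ with $A_g \in \C_g$, and the multiplication respects the grading. I would choose $X$ to be a generator \emph{supported on the indecomposable $\D$-piece $\N$} — concretely, pick $X$ to be a generator of $\N$ over $\D$, which one checks is also a generator of $\el$ over $\C_H$, hence of $\el$ over $\C$. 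With this choice the homogeneous components $A_g = \underline{Hom}_\C(X,X)_g$ are nonzero precisely when $\C_g \stimes_\D \N$ contains a summand equivalent to $\N$ (after translating internal homs through the $\D$-module equivalences $\C_g\stimes_\D \N \cong g\cdot\N$), i.e. precisely when $g \in H$. Thus $A$ is supported on $\C_H$.

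Now apply Corollary \ref{cor1} with the pair $\C_H < \C$: since $A$ is supported on $\C_H$, we conclude $\el \cong Ind_{\C_H}^{\C}(mod_{\C_H} - A) = Ind_{\C_H}^{\C}(\N)$, where $\N := mod_{\C_H} - A$ is a $\C_H$-module category. It remains to check that this $\N$ is indecomposable over $\C_H$ and remains indecomposable over $\D$. Indecomposability over $\C_H$ follows because $A$, as an algebra in $\C_H$, is indecomposable — otherwise $\el$ would decompose over $\C$; and indecomposability over $\D$ is exactly the statement that $A_{\D}$ is an indecomposable algebra in $\D$, which holds because $mod_\D - A_\D$ is the single indecomposable $\D$-piece $\N$ we started with. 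Identifying the two meanings of the symbol $\N$ (the $\D$-module category $mod_\D-A_\D$ and the $\C_H$-module category $mod_{\C_H}-A$, which restricts to the former) completes the proof.

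\textbf{Main obstacle.} The delicate step is the second paragraph: showing that a generator can be chosen so that the internal hom algebra $A$ is supported exactly on $\C_H$. This requires carefully relating $\underline{Hom}_\C(X,X)_g$ to the $\D$-module equivalence class of $\C_g\stimes_\D\N$ — i.e. understanding how internal homs transform under the $G$-action on $\D$-module categories — and arguing that picking $X$ inside a single isotypic $\D$-component kills all homogeneous components outside the stabilizer $H$. Everything else (the transitivity of the $G$-action on components, indecomposability bookkeeping) is comparatively routine.
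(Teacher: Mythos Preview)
Your strategy is exactly the paper's: decompose $\el$ over $\D$, observe that $G$ permutes the pieces transitively, take a stabilizer $H$, choose a generator $X$ in one piece, show $\underline{Hom}_\C(X,X)$ is supported on $\C_H$, and apply Corollary~\ref{cor1}. There is, however, one genuine slip in how you define $H$.

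You set $H = \{g : \C_g \stimes_\D \N \cong \N\}$, the stabilizer of the \emph{equivalence class}. But the $\C$-module structure on $\el$ supplies, for each $g$, a \emph{specific} $\D$-equivalence $\C_g \stimes_\D \el \to \el$ which carries each summand $\el_i$ to a well-defined summand $\el_{\sigma_g(i)}$, and the paper takes $H$ to be the stabilizer of the index $1$ under this honest permutation. When the $\el_i$ are pairwise inequivalent the two agree, but in general your $H$ can be strictly larger. Your claim that $A_g \neq 0$ iff $g \in H$ only holds for the summand stabilizer: for $W \in \C_g$ the object $W \otimes X$ lands in the particular summand $\el_{\sigma_g(1)}$, so $Hom(W\otimes X, X) = 0$ unless that summand \emph{is} $\el_1$, not merely equivalent to it. With your larger $H$ the resulting $\N' = mod_{\C_H}-A$ need not be indecomposable over $\D$: take $\C = Vec_{\Z/2}$, $\D = Vec$, and $\el = \C$ the regular module; over $\D$ this is $Vec \oplus Vec$ with $\sigma$ swapping the copies, your $H$ is all of $\Z/2$ (since $\sigma\cdot Vec \cong Vec$), and $\N' = \el$ is decomposable over $\D$. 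Using the summand stabilizer instead, one gets $\N' \cong \el_1$ directly (since $X$ generates $\el_1$ over $\C_H$ and $A = \underline{Hom}_{\C_H}(X,X)$), and indecomposability over $\D$ is then tautological.
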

\begin{proof}
Suppose that $\eL$ decomposes over $\qD$ as \[
\eL=\bigoplus_{i=1}^{n}\eL_{i}.\]
 For every $g\in G$, we have seen that the action functor defines
an equivalence of categories $\C_{g}\stimes_{\D}\eL\cong\eL$. Since
\[
\C_{g}\stimes_{\D}\eL\cong\bigoplus_{i=1}^{n}\C_{g}\stimes_{\D}\eL_{i},\]
 we see that $G$ permutes the index set $\{1,\ldots,n\}$. This action
is transitive, as otherwise $\eL$ would not have been indecomposable
over $\C$. Let $H<G$ be the stabilizer of $L_{1}$. Then $\N=\eL_{1}$
is a $\C_{H}$-module category which remains indecomposable over $\D$.
Let $X\in Ob\eL_{1}$ be a generator of $\eL$ over $\C$ (any nonzero
object would be a generator, as $\eL$ is indecomposable over $\C$).
By the fact that the stabilizer of $\eL_{1}$ is $H$, it is easy
to see that $\underline{Hom}_{\C}(X,X)$ is contained in $\C_{H}$.
The rest of the lemma now follows from corollary \ref{cor1}.
\end{proof}
So in order to classify indecomposable module categories over $\C$,
we need to classify, for every $H<G$, the indecomposable module categories
over $\C_{H}$ which remain indecomposable over $\D$. For every indecomposable
module category $\eL$ over $\C$, we have attached a subgroup $H$
of $G$ and an indecomposable $\C_{H}$ module category $\eL_{1}$
which remains indecomposable over $\D$. The subgroup $H$ and the
module category $\eL_{1}$ will be the first two components of the
tuple which corresponds to $\eL$. Notice that we could have chosen
any conjugate of $H$ as well.

\section{The first two obstructions\label{sec:The-first-two}}

Let $\eL$, $\N=\eL_{1}$ and $H$ be as in the previous section.
For every $a\in H$ we have an equivalence of $\D$-module categories
$\psi_{a}:\C_{a}\stimes_{\D}\N\cong N$ given by the action of $\C_{H}$
on $\N$. Suppose on the other hand that we are given an $H$-invariant
indecomposable module category $\N$ over $\D$. Let us fix a family
of equivalences $\{\psi_{a}\}_{a\in H}$, where $\psi_{a}:\C_{a}\stimes\N\rightarrow\N$.
Let us see when does this family comes from an action of $\C_{H}$
on $\N$.

We know that the two functors \[
\C_{H}\stimes\C_{H}\stimes\N\stackrel{m\stimes1_{\N}}{\rightarrow}\C_{H}\stimes\N\stackrel{\cdot}{\rightarrow}\N\]
 and \[
\C_{H}\stimes\C_{H}\stimes\N\stackrel{1_{\C_{H}}\stimes(\cdot)}{\rightarrow}\C_{H}\stimes\N\stackrel{\cdot}{\rightarrow}\N\]
 should be isomorphic.

Since the action of $\C_{H}$ on $\N$ is given by the action of $\D$
together with the $\psi_{a}$'s, this condition translates to the
fact that for every $a,b\in H$ the two functors \[
\C_{a}\stimes_{\D}\C_{b}\stimes_{\D}\N\stackrel{M_{a,b}\stimes1_{\N}}{\rightarrow}\C_{ab}\stimes_{\D}\N\stackrel{\psi_{ab}}{\rightarrow}\N\]
 and \[
\C_{a}\stimes_{\D}\C_{b}\stimes_{\D}\N\stackrel{1_{\C_{a}}\stimes\psi_{b}}{\rightarrow}\C_{a}\stimes_{\D}\N\stackrel{\psi_{a}}{\rightarrow}\N\]
 should be isomorphic. We can express this condition in the following
equivalent way- for every $a,b\in H$, the autoequivalence of $\N$
as a $\D$-module category \[
Y_{a,b}=\N\stackrel{\psi_{a}^{-1}}{\rightarrow}\C_{a}\stimes_{\D}\N\stackrel{1_{\C_{a}}\stimes\psi_{b}^{-1}}{\rightarrow}\C_{a}\stimes_{\D}\C_{b}\stimes_{\D}\N\]
 \[
\stackrel{M_{a,b}\stimes1_{\N}}{\rightarrow}C_{ab}\stimes_{\D}\N\stackrel{\psi_{ab}^{-1}}{\rightarrow}\N\]
 should be isomorphic to the identity autoequivalence. We shall decompose
this condition into two simpler ones.

Consider the group $\Ga=Aut_{\D}(\N)$, where by $Aut_{\D}$ we mean
the group of $\D$-autoequivalences (up to isomorphism) of $\N$.
For $a\in H$ and $F\in\Ga$ define $a\cdot F\in\Ga$ as the composition
\[
\N\stackrel{\psi_{a}^{-1}}{\rightarrow}\C_{a}\stimes_{\D}\N\stackrel{1_{\C_{a}}\stimes F}{\rightarrow}\C_{a}\stimes_{\D}\N\stackrel{\psi_{a}}{\rightarrow}\N.\]
 We get a map $\Phi:H\rightarrow Aut(\Ga)$ given by $\Phi(g)(F)=h\cdot F$.
This map depends on the choice of the $\psi_{a}$'s and is not necessary
a group homomorphism. However, the following equation does hold for
every $a,b\in H$: \begin{equation}
\Phi(a)\Phi(b)=\Phi(ab)C_{Y_{a,b}},\label{cocycle1}\end{equation}
 where we write $C_{x}$ for conjugation by $x\in\Ga$.

Notice that $\psi_{a}$ is determined up to composition with an element
in $\Ga$, and that by changing $\psi_{a}$ to be $\psi_{a}'=\gamma\psi_{a}$,
for $\gamma\in\Ga$, we change $\Phi(a)$ to be $\Phi(a)c_{\gamma},$
where by $c_{\gamma}$ we mean conjugation by $\gamma$. Equation
\ref{cocycle1} shows that the composition $\rho=\pi\Phi$, where
$\pi$ is the quotient map $\pi:Aut(\Ga)\rightarrow Out(\Ga)$ does
give a group homomorphism. Notice that by the observation above, $\rho$
does not depend on the choice of the $\psi_{a}$'s, but only on $c$,
$\N$ and $H$. We have the following
\begin{lem}
Let $\nmodule$ be $H$-invariant $\qD$-module category. There is
a well defined group homomorphism $\rho:H\rightarrow Out(\Gamma)$.
If the $\psi_{a}$'s arise from an action of $\C_{H}$ on $\N$, then
the map $\Phi$ described above is a group homomorphism.\end{lem}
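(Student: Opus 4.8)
The plan is to verify the two assertions separately, both by unwinding the definition of $a\cdot F$ and the composition law for $\Phi$. For the first claim, I would start from equation \ref{cocycle1}, which has already been established: for all $a,b\in H$ one has $\Phi(a)\Phi(b)=\Phi(ab)C_{Y_{a,b}}$ in $Aut(\Ga)$. Applying the projection $\pi:Aut(\Ga)\to Out(\Ga)$ and using that $\pi$ kills all inner automorphisms (in particular $\pi(C_{Y_{a,b}})=1$), we get $\pi\Phi(a)\,\pi\Phi(b)=\pi\Phi(ab)$, i.e.\ $\rho:=\pi\Phi$ is a homomorphism $H\to Out(\Ga)$. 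For well-definedness, I would invoke the observation already made in the text: replacing $\psi_a$ by $\gamma\psi_a$ with $\gamma\in\Ga$ changes $\Phi(a)$ to $\Phi(a)c_\gamma$, which differs from $\Phi(a)$ by an inner automorphism, hence has the same image under $\pi$. Thus $\rho$ depends only on $c$, $\N$ and $H$, not on the chosen family $\{\psi_a\}$.

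For the second claim, suppose the $\psi_a$ arise from an honest action of $\C_H$ on $\N$. The point is precisely that, in this situation, the natural isomorphism of functors $F_1\cong F_2$ forces each $Y_{a,b}$ to be isomorphic to the identity autoequivalence of $\N$ as a $\D$-module functor. Concretely: the associativity constraint of the module category gives an explicit natural isomorphism between the two composites $\C_a\dtimes\C_b\dtimes\N\to\N$ described above, and tracing through the definition of $Y_{a,b}$ as the composite $\psi_{ab}^{-1}\circ(M_{a,b}\stimes 1)\circ(1\stimes\psi_b^{-1})\circ\psi_a^{-1}$, this associativity isomorphism is exactly an isomorphism $Y_{a,b}\cong \mathrm{id}_\N$. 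Hence $Y_{a,b}$ represents the trivial element of $\Ga=Aut_\D(\N)$, so $C_{Y_{a,b}}=\mathrm{id}$ in $Aut(\Ga)$, and \ref{cocycle1} collapses to $\Phi(a)\Phi(b)=\Phi(ab)$. One should also check $\Phi(e)=\mathrm{id}$, which is immediate from the unit constraint (or from $\Phi(e)\Phi(e)=\Phi(e)$ together with invertibility), so $\Phi$ is a genuine group homomorphism $H\to Aut(\Ga)$.

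The only genuinely delicate point is the identification of the associativity natural isomorphism with an isomorphism $Y_{a,b}\cong\mathrm{id}_\N$; everything else is formal manipulation in $Aut(\Ga)$ and $Out(\Ga)$. This requires being careful about the direction of the arrows: $Y_{a,b}$ is built from the \emph{inverses} $\psi_a^{-1},\psi_b^{-1},\psi_{ab}^{-1}$, so the isomorphism $F_1\cong F_2$ of the two module-associativity composites has to be transported along these inverse equivalences, and one must check the resulting $2$-cell is the one exhibiting $Y_{a,b}$ as isomorphic to $\mathrm{id}_\N$ rather than to some other autoequivalence. This is a diagram chase in the $2$-category of $\D$-module categories; I would organize it by writing both $F_1$ and $F_2$ as pasting diagrams and cancelling adjacent $\psi_a\circ\psi_a^{-1}$ pairs. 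Once this is in hand, the rest of the lemma follows without further work.
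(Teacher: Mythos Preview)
Your proposal is correct and follows essentially the same approach as the paper's proof: both parts rest on equation \ref{cocycle1}, together with the prior observations that changing $\psi_a$ by an element of $\Gamma$ changes $\Phi(a)$ by an inner automorphism, and that when the $\psi_a$ come from a genuine $\C_H$-action the autoequivalence $Y_{a,b}$ is trivial. The paper's proof is just a terse pointer to exactly these facts; your version spells out the diagram chase for $Y_{a,b}\cong\mathrm{id}_\N$ more carefully, but there is no substantive difference in method.
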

\begin{proof}
This follows from the fact that by the discussion above, if the $\psi_{a}$'s
arise from an action of $\C_{H}$ on $\N$, then $Y_{a,b}$ is trivial
for every $a,b\in H$, and by Equation \ref{cocycle1} we see that
$\Phi$ is a group homomorphism.
\end{proof}
So $c,\N$ and $H$ determines a homomorphism $\rho:H\rightarrow Out(\Ga)$.
We thus see that in order to give $\N$ a structure of a $\C_{H}$-module
category, we need to give a lifting of $\rho$ to a homomorphism to
$Aut(\Ga)$. The first obstruction is thus the possibility to lift
$\rho$ in such a way.

Suppose then that we have a lifting, that is- a homomorphism $\Phi:H\rightarrow Aut(\Ga)$
such that $\pi\Phi=\rho$. To say that $\Phi$ is a homomorphism is
equivalent to say that we have chosen the $\psi_{a}$'s in such a
way that $C_{Y_{a,b}}=Id$, or in other words- in such a way that
for every $a,b\in H$, $Y_{a,b}$ is in $Z(\Ga)$, the center of $\Ga$.

Notice that after choosing $\Phi$, we still have some liberty in
changing the $\psi_{a}$'s. Indeed, if we choose $\psi_{a}'=\gamma_{a}\psi_{a}$,
where $\gamma_{a}\in Z(\Ga)$ for every $a\in H$, we still get the
same $\Phi$, and it is easy to see that every $\psi_{a}'$ that will
give us the same $\Phi$ is of this form.

In order to furnish a structure of a $\C_{H}$-module category on
$\N$, we need $Y_{a,b}$ to be not only central, but trivial. A straightforward
calculation shows now that the function $H\times H\rightarrow Z(\Ga)$
given by $(a,b)\mapsto Y_{a,b}$ is a two cocycle. If we choose a
different set of isomorphisms $\psi_{a}'=\gamma_{a}\psi_{a}$ where
$\gamma_{a}\in Z(\Ga)$, we will get a cocycle $Y'$ which is cohomologous
to $Y$. So the second obstruction is the cohomology class of the
two cocycle $(a,b)\mapsto Y_{a,b}$. We shall denote this obstruction
by $O_{2}(\N,c,H,M,\Phi)\in Z^{2}(H,Z(\Ga))$. Notice that this obstruction
depends linearly on $M$ in the following sense: we have a natural
homomorphism of groups $\xi:inv(Z(\D))\rightarrow\Ga$, given by the
formula \[
\xi(T)(N)=T\otimes N\]
 (that is- $\xi(T)$ is just the autoequivalence of acting by $T$)
It can be seen that if we would have chosen $M'=M\zeta$, where $\zeta\in Z^{2}(G,Z(\D))$,
then we would have changed $O_{2}$ to be $O_{2}res_{H}^{G}(\xi_{*}(\zeta))$.

In conclusion- we saw that if $\N$ is a $\D$-module category upon
which $H$ acts trivially, then we have an induced homomorphism $\rho:H\rightarrow Out(\Ga)$.
The first obstruction to define on $\N$ a structure of a $\C_{H}$-module
category is the fact that $\rho$ should be of the form $\pi\Phi$
where $\Phi:H\rightarrow Aut(Aut_{\D}(\N))$ is a homomorphism. After
choosing such a lifting $\Phi$ we get the second obstruction, which
is a two cocycle $O_{2}(\N,c,H,M,\Phi)\in Z^{2}(H,Z(\Ga))$. A solution
to this obstruction will be an element $v\in C^{1}(H,Z(\Ga))$ which
satisfies \[
\del v=O_{2}(\N,c,H,M,\Phi).\]
 We will see later, in Section \ref{sec:An-intrinsic-description},
that to find a solution for the first and for the second obstruction
is the same thing as to find a splitting for a certain short exact
sequence. We will also see why two solutions $v$ and $v'$ which
differs by a coboundary give equivalent module categories (and therefore
we can view the set of possible solutions, in case it is not empty,
as a torsor over $H^{1}(H,Z(\Ga))$.

\section{The third obstruction}

So far we have almost defined a $\C_{H}$-action on $\N$, by means
of the equivalences $\psi_{a}:\C_{a}\stimes_{\D}\N\rightarrow\N$.
The solutions for the first and for the second obstruction ensures
us that for every $a,b\in H$ the two functors \[
F_{1}:\C_{a}\stimes_{\D}\C_{b}\stimes_{\D}\N\stackrel{M_{a,b}\stimes1_{\N}}{\rightarrow}\C_{ab}\stimes_{\D}\N\stackrel{\psi_{ab}}{\rightarrow}\N\]
 and \[
F_{2}:\C_{a}\stimes_{\D}\C_{b}\stimes_{\D}\N\stackrel{1_{\C_{a}}\stimes\psi_{b}}{\rightarrow}\C_{a}\stimes_{\D}\N\stackrel{\psi_{a}}{\rightarrow}\N\]
 are isomorphic.

For every $a,b\in H$, let us fix an isomorphism $\eta(a,b):F_{1}\rightarrow F_{2}$
between the two functors. In other words, for every $X\in\C_{a}$,
$Y\in\C_{b}$ and $N\in\N$ we have a natural isomorphism \[
\eta(a,b)_{X,Y,N}:(X\otimes Y)\otimes N\rightarrow X\otimes(Y\otimes N).\]
 Since $F_{1}$ and $F_{2}$ are simple as objects in the relevant
functor category (they are equivalences), the choice of the isomorphism
$\eta(a,b)$ is unique up to a scalar, for every $a,b\in H$.

The final condition for $\N$ to be a $\C_{H}$-module category is
the commutativity of the pentagonal diagram. In other words, for every
$a,b,d\in H$, and every $X\in\C_{a}$, $Y\in\C_{b}$, $Z\in\C_{d}$
and $N\in\N$, the following diagram should commute: \[
\xymatrix{(X\otimes(Y\otimes Z))\otimes N\ar[rr]^{\eta(a,bd)_{X,Y\otimes Z,N}} &  & X\otimes((Y\otimes Z)\otimes N)\ar[d]^{\eta(b,d)_{Y,Z,N}}\\
((X\otimes Y)\otimes Z)\otimes N\ar[u]^{\alpha_{X,Y,Z}}\ar[rd]^{\eta(ab,d)_{X\otimes Y,Z,N}} &  & X\otimes(Y\otimes(Z\otimes N))\\
 & (X\otimes Y)\otimes(Z\otimes N)\ar[ru]^{\eta(a,b)_{X,Y,Z\otimes N}}}
\]

This diagram will always be commutative up to a scalar $O_{3}(a,b,d)$
which depends only on $a,b$ and $d$, and not on the particular objects
$X,Y,Z$ and $N$. One can also see that the function $(a,b,d)\mapsto O_{3}(a,b,d)$
is a three cocycle on $H$ with values in $k^{*}$, and that choosing
different $\eta(a,b)$'s will change $O_{3}$ by a coboundary. We
call $O_{3}=O_{3}(\N,c,H,M,\Phi,v,\alpha)\in H^{3}(H,k^{*})$ the
third obstruction. A solution to this obstruction is equivalent to
giving a set of $\eta(a,b)$'s such that the pentagon diagram will
be commutative. We will see in the next section that by altering $\eta$
by a coboundary we will get equivalent module categories. Thus, we
see that the set of solutions for this obstruction will be a torsor
over the group $H^{2}(H,k^{*})$ (in case a solution exists). Notice
that this obstruction depends {}``linearly'' on $\alpha$, in the
sense that if we would have change $\alpha$ to be $\alpha\zeta$
where $\zeta\in H^{3}(G,k^{*})$, then we would have changed the obstruction
by $\zeta$. In other words: \[
O_{3}(\N,c,H,M,\Phi,v,\alpha\zeta)=O_{3}(\N,c,H,M,\Phi,v,\alpha)res_{H}^{G}(\zeta).\]
 This ends the proof of Theorem \ref{thm:main}.

\section{The isomorphism condition\label{sec:The-isomorphism-condition}}

In this section we answer the question of when does the $\C$-module
categories $\M(\N,H,\Phi,v,\beta)$ and $\M(\N',H',\Phi',v',\beta')$
are equivalent.

Assume then that we have an equivalence of $\C$-module categories
\[
F:\M(\N,H,\Phi,v,\beta)\rightarrow\M(\N',H',\Phi',v',\beta').\]
 Let us denote these categories by $\M$ and $\M'$ respectively.
Then $F$ is also an equivalence of $\D$-module categories. Recall
that as $\D$-module categories, $\M$ splits as \[
\bigoplus_{g\in G/H}\C_{g}\stimes_{\D}\N.\]
 A similar decomposition holds for $\M'$.

By considering these decompositions, it is easy to see that $F$ induces
an equivalence of $\D$-module categories between $\C_{g}\stimes_{\D}\N$
and $\N'$ for some $g\in G$. Let us denote the restriction of $F$
to $\C_{g}\stimes_{\D}\N$ as a functor of $\D$-module categories
by $t_{F}$. We can reconstruct the tuple $(\N',H',\Phi',v',\beta')$
from $t_{F}$ in the following way: We have already seen that $\N'$
is equivalent to $\C_{g}\stimes_{\D}\N$ and that the stabilizer subgroup
of the category $\nmodule'$ will be $H'=gHg^{-1}$.

Let us denote by $\Ga'$ the group $Aut_{\D}(N')$. We have a natural
isomorphism $\nu:\Ga\rightarrow\Ga'$ given by the formula \[
\nu(t):\N'\stackrel{F^{-1}}{\rightarrow}\C_{g}\stimes_{\D}\N\stackrel{1\stimes t}{\rightarrow}\C_{g}\stimes_{\D}\N\stackrel{F}{\rightarrow}\N'.\]
 Using the functor $t_{F}$ and the map $\nu$ we can see that the
map \[
\rho':gHg^{-1}\rightarrow Out(\Ga')\]
 which appears in the construction of the second module category is
the composition \[
gHg^{-1}\stackrel{c_{g}}{\rightarrow}H\stackrel{\rho}{\rightarrow}Out(\Ga)\rightarrow Out(\Ga'),\]
 where the last morphism is induced by $\nu$. The map $\Phi'$ which
lifts $\rho'$ will depend on $\Phi$ in a similar fashion. The same
holds for the second obstruction and its solution.

For the third obstruction, the situation is a bit more delicate. Since
$F$ is a functor of $\C$-module categories, we have, for each $a\in H$,
a natural isomorphism between the functors \[
\C_{gag^{-1}}\stimes_{\D}(\C_{g}\stimes_{\D}\N)\stackrel{1\stimes F}{\rightarrow}\C_{gag^{-1}}\stimes_{\D}\N'\stackrel{\cdot}{\rightarrow}\N'\]
 and \[
\C_{gag^{-1}}\stimes_{\D}(\C_{g}\stimes_{\D}\N)\stackrel{\cdot}{\rightarrow}\C_{g}\stimes_{\D}\N\stackrel{F}{\rightarrow}\N'\]

For any $a\in H$, the choice of the natural isomorphism is unique
up to a scalar. A direct calculation shows that if we change the natural
isomorphisms by a set of scalars $\zeta_{a}$, we will get an equivalence
$\M(N,H,\Phi,v,\beta)\rightarrow\M(N',H',\Phi',v',\beta'')$ where
$\beta''=\beta'\del\zeta$. This is the reason that cohomologous solutions
for the third obstruction will give us equivalent module categories.

In conclusion, we have the following:
\begin{prop}
Assume that we have an isomorphism $F:\M(\N,H,\Phi,v,\beta)\rightarrow\M(\N',H',\Phi',v',\beta')$
Then there is a $g\in G$ such that $F$ will induce an equivalence
of $\D$-module categories $\C_{g}\stimes_{\D}\N\rightarrow\N'$,
and the data $(\N',H',\Phi',v',\beta')$ can be reconstructed from
$t_{F}$ in the way described above ($\beta'$ will be reconstructible
only up to a coboundary) .
\end{prop}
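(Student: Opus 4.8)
The plan is to push $F$ through the $\D$-module decompositions of $\M$ and $\M'$, extract the coset $g$ by a Krull--Schmidt argument, and then transport the whole tuple along the restricted functor $t_F$, handling with care the scalar freedom that controls $\beta'$.

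First I would regard $F$ as an equivalence of $\D$-module categories. Recall that over $\D$ we have $\M\cong\bigoplus_{gH\in G/H}\C_g\dtimes\N$, and likewise $\M'\cong\bigoplus_{g'H'\in G/H'}\C_{g'}\dtimes\N'$. Since $\C_g$ is an invertible $\D$-bimodule and $\N$ is indecomposable over $\D$, each summand $\C_g\dtimes\N$ is indecomposable over $\D$, and similarly for $\M'$; so these are precisely the indecomposable $\D$-summands, and by the (essentially unique) Krull--Schmidt decomposition of module categories the equivalence $F$ must match them up. Applying $F^{-1}$ to the distinguished summand $\N'=\C_e\dtimes\N'$ of $\M'$ yields an indecomposable $\D$-summand of $\M$, hence one of the form $\C_g\dtimes\N$; a different representative of the coset $gH$ yields an equivalent summand (via $1_{\C_g}\dtimes\psi_h$), so only $gH$ matters. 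Thus $F$ restricts to an equivalence of $\D$-module categories $t_F:\C_g\dtimes\N\to\N'$, which is the first assertion.

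Next I would pin down $H'$ and transport the $\D$-side data. A direct computation with the $G$-action gives $\C_{g'}\dtimes(\C_g\dtimes\N)\cong\C_g\dtimes\N$ iff $g^{-1}g'g\in H$, so the stabilizer of $\C_g\dtimes\N$ is $gHg^{-1}$; since the stabilizer of $\N'$ is $H'$ by construction and $t_F$ is an equivalence of $\D$-module categories, $H'=gHg^{-1}$. Moreover, restricting the left $\C$-action on $\M$ to $\C_{gHg^{-1}}$ (the stabilizer of the summand) endows $\C_g\dtimes\N$ with a $\C_{gHg^{-1}}$-module structure whose structure equivalences are the composites of the bimodule associativity isomorphism $\C_{gag^{-1}}\dtimes\C_g\cong\C_g\dtimes\C_a$ with $1_{\C_g}\dtimes\psi_a$. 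Unwinding Section \ref{sec:The-first-two} for this induced structure, its outer action is $gHg^{-1}\stackrel{c_g}{\rightarrow}H\stackrel{\rho}{\rightarrow}Out(\Ga)$ carried along the canonical isomorphism $\Ga\cong Aut_{\D}(\C_g\dtimes\N)$ (induced by $\C_g\dtimes-$), and $\Phi,v$ are carried similarly. Because $F$ restricts to an equivalence of $\C_{gHg^{-1}}$-module categories $t_F:\C_g\dtimes\N\to\N'$, it transports this tuple to the tuple of $\N'$; composing $\Ga\cong Aut_{\D}(\C_g\dtimes\N)$ with the conjugation isomorphism by $t_F$ recovers the map $\nu$ of the statement, whence $\rho',\Phi',v'$ are the claimed composites. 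The transported cocycle identities remain valid because the associativity constraints of $\C$ (the data $M$ and $\alpha$) thread through coherently --- this is a routine but lengthy diagram chase.

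Finally I would deal with $\beta'$. The data making $F$ a $\C$-module functor includes a coherent isomorphism $s_{X,m}:F(X\otimes m)\to X\otimes F(m)$, which is extra information not remembered by $t_F$ alone. Restricting $X$ to $\C_{gag^{-1}}$ and $m$ to the summand $\C_g\dtimes\N$ gives, for each $a\in H$, an isomorphism between two equivalences $\C_{gag^{-1}}\dtimes(\C_g\dtimes\N)\to\N'$, and such an isomorphism is unique up to a scalar $\zeta_a\in k^*$. A direct computation with the pentagon defining $\beta'$ shows that replacing this family by $\{\zeta_a s\}$ replaces $\beta'$ by $\beta'\del\zeta$; hence $\beta'$ is recovered from $t_F$ only after choosing these scalars, i.e.\ only up to a coboundary, as asserted. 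The main obstacle is exactly this last point together with the compatibility claims above: one must check that the transport through $t_F$ respects all of the associativity constraints of $\C$, so that the reconstructed $v'$ and $\beta'$ satisfy the correct cocycle conditions and the only remaining indeterminacy is $\del\zeta$. Everything else is bookkeeping with the $\D$-module decomposition and Krull--Schmidt.
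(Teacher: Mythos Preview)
Your argument is correct and follows essentially the same route as the paper: the proposition is established in Section~\ref{sec:The-isomorphism-condition} by exactly this sequence of steps---use the $\D$-decomposition and Krull--Schmidt to extract $g$ and $t_F$, read off $H'=gHg^{-1}$ from stabilizers, transport $\rho,\Phi,v$ along the conjugation isomorphism $\nu$, and observe that the scalar freedom in the module-functor data for $F$ leaves $\beta'$ determined only up to $\partial\zeta$. Your write-up is slightly more explicit about the Krull--Schmidt step and the transport of the $\C_{gHg^{-1}}$-structure, but there is no substantive difference.
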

Notice that we do not have any restriction on $t_{F}$. In other words,
given any $t_{F}:\C_{g}\stimes_{\D}\N\rightarrow\N'$ we can always
reconstruct the tuple $(\N',H',\Phi',v',\beta')$ in the way described
above.

We would like now to {}``decompose'' the equivalence in the theorem
into several steps. The first ingredient that we need in order to
get an equivalence is an element $g\in G$ such that $\C_{g}\stimes_{\D}\N\equiv\N'$.

Consider now the case where this ingredient is trivial, that is- $g=1$,
$\N=\N'$ and $H=H'$. In that case $t_{F}$ is an autoequivalence
of the $\D$-module category $\N$. Let us denote by $\psi_{a}:\C_{a}\stimes_{\D}\N\rightarrow\N$
and by $\psi_{a}':\C_{a}\stimes_{\D}\N\rightarrow\N$ the structural
equivalences of the two categories (where $a\in H$). Since $F$ is
an equivalence of $\C$-module categories, we see that the following
diagram is commutative:

\[
\xymatrix{\C_{a}\stimes_{\D}\N\ar[r]^{\,\,\,\,\,\,\,\,\,\psi_{a}}\ar[d]^{1\stimes t_{F}} & \N\ar[d]^{t_{F}}\\
\C_{a}\stimes_{\D}\N\ar[r]^{\,\,\,\,\,\,\,\,\,\,\psi'_{a}} & \N}
\]

and a direct calculation shows that $\Phi$ and $\Phi'$ satisfy the
following formula: \begin{equation}
\Phi'(a)(V)=t_{F}\Phi(a)(t_{F}^{-1}Vt_{F})t_{F}^{-1}\label{phis}\end{equation}
 where $V$ is any element in $\Ga$.

Another way to write Equation \ref{phis} is $\Phi'=c_{t_{F}}\Phi c_{t_{F}}^{-1}$,
where by $c_{t_{F}}$ we mean the automorphism of $\Ga$ of conjugation
by $t_{F}$. In other words- this shows that we have some freedom
in choosing $\Phi$, and if we change $\Phi$ in the above fashion,
we will still get equivalent categories.

Consider now the case where also $\Phi=\Phi'$. This means that for
every $a\in H$ the element $t_{F}\Phi(a)(t_{F})^{-1}$ is central
in $\Ga$. A direct calculation shows that the function $r$ defined
by $r(a)=t_{F}\Phi(a)(t_{F})^{-1}$ is a one cocycle with values in
$Z(\Ga)$, and that $v/v'=r$. Notice in particular that by choosing
arbitrary $t_{F}\in Z(\Ga)$ we see that cohomologous solutions to
the second obstruction will give us equivalent categories. However,
we see that more is true, and it might happen that non cohomologous
$v$ and $v'$ will define equivalent categories.

Last, if the situation is that $t_{F}=\Phi(a)(t_{F})$ for every $a\in H$,
we will have the same $(\N,H,\Phi,v)$, but $\beta$ might be different.
We have seen that if $\beta$ and $\beta'$ are cohomologous they
will define equivalent categories, but it might happen that noncohomologous
$\beta$ and $\beta'$ will define equivalent categories as well.

\section{Functor categories \label{sec:Equivaraintization}}

In this section we are going to describe the category of functors
between module categories over an extension in terms of module categories
over the trivial component of the extension. We prove a categorical
analogue of Mackey's Theorem and we give a criterion for an extension
to be group theoretical. In addition, given that $\qC$ is a $G$-extension
of $\qD$, we describe the category $Fun_{\C}(\M_{1},\M_{2})$ of
$\qC$-module functors as an equivariantization of the category $Fun_{\D}(\M_{1},\M_{2})$
of $\qD$-module functors with respect to $G$.

\subsection{Mackey's Theorem for module categories}

Let $\qC$ be a $G$-extension of $\qD$. For any subset $S\subseteq G$
denote the subcategory $\mbox{\ensuremath{\bigoplus}}_{g\in S}\qC_{g}$
by $\qC_{S}$. If $S$ is a subgroup of $G$ then $\qC_{S}$ is a
fusion subcategory. Let $H$ and $K$ be subgroups of $G$ and let
$\nmodule$ be a $\qC_{K}$-module category. We prove now a categorical
version of Mackey's Theorem.
\begin{thm}\label{mackey}
$(\qC\boxtimes_{\qC_{K}}\nmodule)_{|\qC_{H}}\cong\bigoplus_{HgK}\qC_{H}\boxtimes_{\qC_{H^{g}}}\nmodule^{g}$,
where $H^{g}=H\cap gKg^{-1}$ and $\nmodule^{g}=(\qC_{gK}\boxtimes_{\qC_{K}}\nmodule)_{|H^{g}}$
is $\qC_{H^{g}}$-module category and the sum is over all the double
cosets.\end{thm}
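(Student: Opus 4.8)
The plan is to compute the restriction $(\qC\boxtimes_{\qC_{K}}\nmodule)_{|\qC_{H}}$ directly by unpacking the induced module category as a $\qD$-bimodule object and tracking the $G$-grading. First I would use the fact that, as a $\qC_{H}$-$\qC_{K}$-bimodule category, $\qC = \bigoplus_{g\in G}\qC_{g}$ decomposes according to the $H$-$K$-orbits on $G$: choosing double coset representatives $g$ for $H\backslash G/K$, one has $\qC = \bigoplus_{HgK}\qC_{HgK}$, and each $\qC_{HgK}$ is a $\qC_{H}$-$\qC_{K}$-sub-bimodule category. Since $\boxtimes_{\qC_{K}}$ is additive in the first variable, this gives $(\qC\boxtimes_{\qC_{K}}\nmodule)_{|\qC_{H}} \cong \bigoplus_{HgK}(\qC_{HgK}\boxtimes_{\qC_{K}}\nmodule)$, reducing the problem to identifying each summand.

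Next I would fix a double coset $HgK$ and show $\qC_{HgK}\boxtimes_{\qC_{K}}\nmodule \cong \qC_{H}\boxtimes_{\qC_{H^{g}}}\nmodule^{g}$ where $H^{g}=H\cap gKg^{-1}$ and $\nmodule^{g}=(\qC_{gK}\boxtimes_{\qC_{K}}\nmodule)_{|\qC_{H^{g}}}$. The key observation is that $\qC_{gK}$ is an invertible $\qC_{H^{g}}$-$\qC_{K}$-bimodule category (it is a graded piece of $\qC$ supported on a single $H^{g}$-$K$-orbit, namely $H^{g}gK = gK$ since $H^{g}\subseteq gKg^{-1}$), so that $\nmodule^{g}$ is a genuine $\qC_{H^{g}}$-module category; one then checks $\qC_{HgK}\cong\qC_{H}\boxtimes_{\qC_{H^{g}}}\qC_{gK}$ as $\qC_{H}$-$\qC_{K}$-bimodule categories by comparing gradings — the left side is graded by $HgK$, and the right side is graded by $(H/H^{g})\times(gK)$, and the natural map $H/H^{g}\times gK \to HgK$ is a bijection. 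Associativity of the relative tensor product, i.e. $(\qC_{H}\boxtimes_{\qC_{H^{g}}}\qC_{gK})\boxtimes_{\qC_{K}}\nmodule \cong \qC_{H}\boxtimes_{\qC_{H^{g}}}(\qC_{gK}\boxtimes_{\qC_{K}}\nmodule)$, then finishes this step, and by the Lemma on the existence of $Ind$ (together with the remark that $Ind^{\qC}_{\qD}(\nmodule)\cong\qC\boxtimes_{\qD}\nmodule$) the right-hand side is precisely $\qC_{H}\boxtimes_{\qC_{H^{g}}}\nmodule^{g} = Ind^{\qC_{H}}_{\qC_{H^{g}}}(\nmodule^{g})$.

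I expect the main obstacle to be making the bimodule-category bookkeeping fully rigorous: verifying that the relative tensor product $\boxtimes_{\qC_{K}}$ over a fusion category is associative and compatible with the additive decomposition coming from the grading, and that the graded pieces $\qC_{gK}$ really are invertible bimodule categories over the indicated subcategories so that all the intermediate objects are honest module categories rather than mere module-category-like data. This is where one genuinely uses that $\qC$ is a \emph{faithful} $G$-extension of $\qD$ (so each $\qC_{g}$ is an invertible $\qD$-bimodule category) and that invertibility is preserved under the relevant restrictions and tensor products. A secondary technical point is checking that the equivalence is independent (up to canonical equivalence) of the chosen double coset representatives; this follows because a different representative $g' = hgk$ induces an equivalence $\qC_{g'K}\cong\qC_{g}\boxtimes_{\qC_K}(\text{translation by }k)$ intertwining the two constructions, but it should be spelled out.
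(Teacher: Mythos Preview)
Your proposal is correct and follows essentially the same approach as the paper's proof: decompose $\qC$ as a $\qC_H$-$\qC_K$-bimodule into double-coset pieces $\qC_{HgK}$, identify each piece as $\qC_H \boxtimes_{\qC_{H^g}} \qC_{gK}$ (the paper phrases this step via the transitive $H\times K^{op}$-action on $HgK$ and the computation of the stabilizer of $g$, which amounts to the same bijection you describe), and then use associativity of the relative tensor product. The paper's argument is terser and does not spell out the bookkeeping issues you flag (associativity of $\boxtimes$, independence of coset representatives), treating them as routine.
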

\begin{proof}
First, consider the transitive $H\times K^{op}$-action on $HgK$.
The stabilizer of $g$ is $\{(gkg^{-1},k^{-1})|k\in K,gkg^{-1}\in H\}$.
Hence, $\qC_{HgK}$ is isomorphic to $\qC_{H}\boxtimes_{\qC_{H^{g}}}\qC_{gK}$
as $(\qC_{H},\qC_{K})$-bimodule category. Next, $(\qC\boxtimes_{\qC_{K}}\nmodule)_{|\qC_{H}}\cong\bigoplus_{HgK}\qC_{HgK}\boxtimes_{\qC_{K}}\nmodule$
where the sum is over all the double cosets. Finally $\qC_{HgK}\boxtimes_{\qC_{K}}\nmodule\cong(\qC_{H}\boxtimes_{\qC_{H^{g}}}\qC_{gK})\boxtimes_{\qC_{K}}\nmodule$$\cong\qC_{H}\boxtimes_{\qC_{H^{g}}}(\qC_{gK}\boxtimes_{\qC_{K}}\nmodule)$$=\qC_{H}\boxtimes_{\qC_{H^{g}}}\nmodule^{g}.$\end{proof}
\begin{rem}
The above theorem could be stated in the original Mackey's Theorem
language, namely $res_{H}^{G}ind_{K}^{G}(\nmodule)\cong\bigoplus_{HgK}ind_{H^{g}}^{H}res_{H^{g}}^{K}(\nmodule^{g})$.
One notices that the proof of the theorem uses only basic consideration
about double cosets.
\end{rem}

\subsection{Functor categories}

Assume that we have two module categories $\M_{1}=\M(\N,H,\Phi,v,\beta)$,
and $\M_{2}=\M(\N',H',\Phi',v',\beta')$. Let us denote $H'$ by $K$.
Our goal is to calculate $Fun_{\qC}(\mmodule_{1},\mmodule_{2})$ in
terms of functor categories over $\qD$. We have \[
Fun_{\qC}(\mmodule_{1},\mmodule_{2})=Fun_{\qC}(\qC\boxtimes_{\qC_{H}}\nmodule,\qC\boxtimes_{\qC_{K}}\nmodule').\]
 By Frobenious reciprocity \[
Fun_{\qC}(\qC\boxtimes_{\qC_{H}}\nmodule,\qC\boxtimes_{\qC_{K}}\nmodule')\cong Fun_{\qC_{H}}(\nmodule,(\qC\boxtimes_{\qC_{K}}\nmodule')_{|\qC_{H}}).\]
 Since a module category is, by definition, a semisimple category
every functor has both a left adjoint and a right adjoint. Taking
left adjoints (right adjoints) gives us an equivalence of the corresponding
functor categories.

Thus we obtain the following equivalence by taking left adjoints \[
Fun_{\qC_{H}}(\nmodule,(\qC\boxtimes_{\qC_{K}}\nmodule')_{|\qC_{H}})\cong Fun_{\qC_{H}}((\qC\boxtimes_{\qC_{K}}\nmodule')_{|\qC_{H}},\nmodule).\]
 By Mackey's Theorem for module categories we have \[
Fun_{\qC_{H}}((\qC\boxtimes_{\qC_{K}}\nmodule')_{|\qC_{H}},\nmodule)\cong Fun_{\qC_{H}}(\bigoplus_{HgK}\qC_{H}\boxtimes_{\qC_{H^{g}}}\nmodule'^{g},\nmodule)\]
 and \[
Fun_{\qC_{H}}(\bigoplus_{HgK}\qC_{H}\boxtimes_{\qC_{H^{g}}}\nmodule'^{g},\nmodule)\cong\bigoplus_{HgK}Fun_{\qC_{H^{g}}}(\nmodule'^{g},\nmodule_{|\qC_{H^{g}}}).\]
 Finally, by taking right adjoints, we end up with the following 

\begin{prop}
\label{pro:ext-fun-cat}In the above notations \[
Fun_{\qC}(\mmodule_{1},\mmodule_{2})\cong{\bigoplus_{HgK}Fun_{\qC_{H^{g}}}(\nmodule_{|\qC_{H^{g}}},\nmodule'^{g}).}\]

\end{prop}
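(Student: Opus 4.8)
The proof will consist in assembling, into a single argument, the chain of equivalences already displayed before the statement, so the plan is mainly to record why each arrow is an equivalence of $k$-linear categories. First I would unfold $\mmodule_{1}\cong\qC\boxtimes_{\qC_{H}}\nmodule$ and $\mmodule_{2}\cong\qC\boxtimes_{\qC_{K}}\nmodule'$ (Theorem~\ref{pro:structure}) and apply Frobenius reciprocity for the adjunction $Ind_{\qC_{H}}^{\qC}\dashv(-)_{|\qC_{H}}$ recalled in Section~\ref{sec:Preliminaries} to obtain
\[
Fun_{\qC}(\mmodule_{1},\mmodule_{2})\cong Fun_{\qC_{H}}\bigl(\nmodule,(\qC\boxtimes_{\qC_{K}}\nmodule')_{|\qC_{H}}\bigr).
\]
Next I would use that a module category over a fusion category is semisimple, hence every module functor has both a left and a right adjoint and passing to left (or right) adjoints is an equivalence $Fun_{\qC_{H}}(\mathcal{A},\mathcal{B})\cong Fun_{\qC_{H}}(\mathcal{B},\mathcal{A})$; applying this once moves the decomposable object into the first variable. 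At that point I invoke Theorem~\ref{mackey} to replace $(\qC\boxtimes_{\qC_{K}}\nmodule')_{|\qC_{H}}$ by $\bigoplus_{HgK}\qC_{H}\boxtimes_{\qC_{H^{g}}}\nmodule'^{g}$.

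The remainder is bookkeeping: $Fun_{\qC_{H}}(-,\nmodule)$ sends the (finite) direct sum in the first variable to a direct sum of functor categories, and Frobenius reciprocity along each inclusion $\qC_{H^{g}}\subseteq\qC_{H}$ identifies $Fun_{\qC_{H}}(\qC_{H}\boxtimes_{\qC_{H^{g}}}\nmodule'^{g},\nmodule)$ with $Fun_{\qC_{H^{g}}}(\nmodule'^{g},\nmodule_{|\qC_{H^{g}}})$, giving
\[
\bigoplus_{HgK}Fun_{\qC_{H^{g}}}(\nmodule'^{g},\nmodule_{|\qC_{H^{g}}}).
\]
Finally, applying the adjoint equivalence once more in each summand, this time taking right adjoints so that the two adjunction steps compose to the identity, reverses each Hom and produces the asserted formula.

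I do not expect a genuine obstacle: the statement is formal once the ingredients are in place. The points that must be stated with care are (i) that the two passages to adjoints are taken on opposite sides, so their composite is naturally the identity and one does not accidentally land in $Fun_{\qC_{H^{g}}}(\nmodule_{|\qC_{H^{g}}},\nmodule'^{g})^{\mathrm{op}}$; (ii) that the Mackey decomposition of Theorem~\ref{mackey} is an equivalence of $\qC_{H}$-module categories, not merely of abelian categories, so that feeding it into $Fun_{\qC_{H}}(-,\nmodule)$ is legitimate; and (iii) that the form of Frobenius reciprocity invoked — the $Ind$–$Res$ adjunction for an inclusion of fusion categories — is exactly the one established in Section~\ref{sec:Preliminaries}. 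None of this is hard, but all of it belongs in the proof.
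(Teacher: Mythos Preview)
Your proposal is correct and follows essentially the same chain of equivalences as the paper's own argument: Frobenius reciprocity, passage to left adjoints, Mackey decomposition, splitting the direct sum and applying Frobenius again, then passage to right adjoints. Your explicit attention to points (i)--(iii) is more careful than the paper's exposition, but the underlying proof is the same.
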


\subsection{A criterion for an extension to be group theoretical}

Let $\qC$ be a fusion category. Recall that a $\qC$-module category
$\mmodule$ is called \textit{pointed} if $\cmdual$, the dual category
with respect to $\mmodule$ , is pointed. We say that $\qC$ is group
theoretical in case $\qC$ has a pointed module category. As can easily
be seen, $\qC$ is pointed if and only if it has an indecomposable
module category $\N$ such that any simple $\C$-linear functor $F:\N\rightarrow\N$
is invertible.

We now prove a criterion for an extension category to be group theoretical.

\begin{thm}
Let $\qC$ be a $G$-extension of $\qD$. $\qC$ is group theoretical
if and only if $\qD$ has a pointed module category $\nmodule$ which
is $G$-stable, namely, for every $g\in G$, $\nmodule\cong\qC_{g}\boxtimes_{\qD}\nmodule$.\end{thm}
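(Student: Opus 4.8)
The plan is to prove both implications using the classification developed in the paper together with the characterization of pointedness given just above the statement: a fusion category is group theoretical precisely when it has an indecomposable module category $\N$ on which every simple $\C$-linear endofunctor is invertible, equivalently $Aut_{\C}(\N) = \mathrm{End}_{\C}(\N)\setminus\{0\}$ consists entirely of invertible objects in the dual category, i.e. the dual $\C^*_{\N}$ is pointed.

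For the ``if'' direction, suppose $\qD$ has a pointed module category $\nmodule$ which is $G$-stable, i.e. $\qC_g\dtimes\nmodule\cong\nmodule$ as $\qD$-module categories for every $g\in G$. Then $\nmodule$ is $G$-invariant in the sense of Section~\ref{sec:zero}, so we may attempt to build a $\qC$-module category from it with $H = G$. The key point is that $\Gamma = Aut_{\qD}(\nmodule)$ is a \emph{finite abelian group}: since $\nmodule$ is pointed, $\qD^*_{\nmodule}$ is pointed, hence commutative, and its invertible objects — which include all the autoequivalences of $\nmodule$ — form an abelian group. Therefore $Z(\Gamma) = \Gamma$, the outer action $\rho: G\to Out(\Gamma)$ is trivial, and the first obstruction vanishes (take $\Phi$ trivial). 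The second obstruction now lives in $H^2(G,\Gamma)$ and the third in $H^3(G,k^*)$; the point is not that these vanish but that \emph{for a suitable choice of $H$ we can always solve them}. Here one passes to a subgroup: by Theorem~\ref{thm:main}, choosing any $H\le G$, any lift $\Phi$, any $v$ solving $O_2$, and any $\beta$ solving $O_3$ produces $\M(\nmodule,H,\Phi,v,\beta)$, and one should check that this module category is again pointed. The cleanest route is to take $H$ as large as possible so that $O_2$ and $O_3$ admit solutions (such $H$ exists, e.g. $H$ trivial works), form $\M = \M(\nmodule,H,\Phi,v,\beta)$, and then use Proposition~\ref{pro:ext-fun-cat} with $\M_1=\M_2=\M$ to compute $\mathrm{End}_{\qC}(\M) = Fun_{\qC}(\M,\M) \cong \bigoplus_{HgH} Fun_{\qC_{H^g}}(\nmodule_{|\qC_{H^g}}, \nmodule^g)$; each summand's simple objects come from simple $\qD$-module functors $\nmodule\to\nmodule$ (which are all invertible, as $\nmodule$ is pointed over $\qD$) equipped with a compatible $\qC_{H^g}$-structure, and one argues each such simple functor is invertible, so $\M$ is pointed and $\qC$ is group theoretical.

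For the ``only if'' direction, suppose $\qC$ is group theoretical, so there is an indecomposable $\qC$-module category $\lmodule$ with $Aut_{\qC}(\lmodule)$ exhausting the nonzero simple $\qC$-endofunctors. By Theorem~\ref{pro:structure}, $\lmodule \cong Ind_{\qC_H}^{\qC}(\nmodule)$ for some $H\le G$ and some $\qC_H$-module category $\nmodule$ that remains indecomposable over $\qD$. First I would argue that $\nmodule$, viewed as a $\qD$-module category, is pointed: again by Proposition~\ref{pro:ext-fun-cat} (or directly by Frobenius reciprocity and Mackey), $Fun_{\qC}(\lmodule,\lmodule)$ contains $Fun_{\qC_H}(\nmodule,\nmodule)$ as the summand corresponding to the trivial double coset, and restriction of scalars $Fun_{\qC_H}(\nmodule,\nmodule)\to Fun_{\qD}(\nmodule,\nmodule)$ together with the fact that all simples of the former are invertible forces all simples of the latter to be invertible — one has to check that a simple $\qD$-module endofunctor of $\nmodule$ underlies a simple $\qC_H$-module endofunctor, which follows because $\nmodule$ is indecomposable over $\qD$ so the forgetful functor on Hom-spaces is injective on a generator. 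Hence $\nmodule$ is a pointed $\qD$-module category. Finally, since $\lmodule$ is indecomposable over $\qC$, the Lemma of Section~\ref{sec:zero} (``$\lmodule$ is $G$-invariant as a $\qD$-module category'') combined with the Mackey decomposition $\lmodule_{|\qD} \cong \bigoplus_{g\in G/H}\qC_g\dtimes\nmodule$ shows that $G$ permutes the blocks $\qC_g\dtimes\nmodule$ transitively, and each of these is pointed over $\qD$ (being related to $\nmodule$ by an invertible $\qD$-bimodule, which preserves pointedness of the dual). Picking any one of them — say $\nmodule$ itself — one must still produce a $\qD$-module category that is $G$-stable on the nose; for this I would observe that the $G$-orbit structure forces $\qC_g\dtimes\nmodule\cong\nmodule'$ for a fixed $\nmodule'$ lying over the block with the largest stabilizer, and then replace $\nmodule$ by that $\nmodule'$, or more simply invoke that all blocks are pairwise equivalent as $\qD$-module categories because they are cyclically permuted and $\lmodule$ is indecomposable, so in fact $\qC_g\dtimes\nmodule\cong\nmodule$ for all $g\in G$. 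This gives the desired pointed $G$-stable $\qD$-module category.

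The main obstacle, I expect, is the bookkeeping in the ``if'' direction showing that the constructed $\M(\nmodule,H,\Phi,v,\beta)$ is actually pointed — i.e., controlling $Fun_{\qC}(\M,\M)$ via Proposition~\ref{pro:ext-fun-cat} and verifying that every simple summand consists of invertible functors, which requires knowing that the $\qC_{H^g}$-module structures on invertible $\qD$-module functors between the various $\nmodule^g$ are themselves ``invertible'' data. The delicate point is that pointedness of a dual category is not literally the statement ``$Aut_{\qD}(\nmodule)$ is all of $\mathrm{End}$'' but rather involves all simple module functors; reconciling the Mackey-style decomposition of endofunctors of $\M$ over $\qC$ with the pointedness of $\nmodule$ over $\qD$ is where the real work lies. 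A secondary subtlety is the passage to a subgroup $H$ in order to solve $O_2$ and $O_3$: one must make sure that shrinking $H$ does not destroy pointedness, which again is checked through Proposition~\ref{pro:ext-fun-cat}. I would also spend a sentence reconciling this criterion with the one of \cite{GNN}, as promised in the introduction, by translating $G$-stability of $\nmodule$ into the language of that reference.
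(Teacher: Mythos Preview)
Your ``if'' direction is far more complicated than it needs to be, and this complication is where your self-identified ``main obstacle'' comes from. The paper simply sets $\M = \C\boxtimes_{\D}\N = Ind_{\D}^{\C}(\N)$, i.e.\ it takes $H=1$ from the outset. Then Frobenius reciprocity gives
\[
Fun_{\C}(\M,\M)\cong \bigoplus_{g\in G} Fun_{\D}(\N,\C_g\boxtimes_{\D}\N)\cong \bigoplus_{g\in G} Fun_{\D}(\N,\N),
\]
and since $\N$ is pointed over $\D$ every simple on the right is an equivalence; the corresponding induced $\C$-functor is then also an equivalence. There is no need to solve $O_2$ or $O_3$, and no need for the Mackey decomposition with double cosets. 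You do mention ``$H$ trivial works'' in passing, but then steer toward ``$H$ as large as possible,'' which is exactly the wrong direction. Also note a slip: when $\Gamma$ is abelian one has $Out(\Gamma)=Aut(\Gamma)$, not $Out(\Gamma)=1$; the first obstruction vanishes because $\pi$ is then an isomorphism, but the lift $\Phi$ equals $\rho$, not the trivial map.

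Your ``only if'' direction has the right skeleton but two genuine gaps. First, for $G$-stability of $\N$: transitivity of the $G$-permutation on the blocks $\{\C_g\boxtimes_{\D}\N\}_{gH}$ does \emph{not} by itself imply that all blocks are mutually equivalent as abstract $\D$-module categories (you only know $\C_{g'}\boxtimes_{\D}(\C_g\boxtimes_{\D}\N)\cong \C_{g'g}\boxtimes_{\D}\N$, which is tautological). The paper instead uses pointedness directly: since $\M$ is indecomposable over $\C$, for each $g$ there is a simple $\C$-endofunctor $F$ of $\M$ with $F(\N)\subset \C_g\boxtimes_{\D}\N$; being simple, $F$ is invertible, hence restricts to an equivalence $\N\cong \C_g\boxtimes_{\D}\N$. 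Second, for pointedness of $\N$ over $\D$: your argument that a simple $\D$-endofunctor ``underlies a simple $\C_H$-endofunctor'' because ``the forgetful functor on Hom-spaces is injective on a generator'' is not a proof. What is needed is that the forgetful tensor functor $Fun_{\C_H}(\N,\N)\to Fun_{\D}(\N,\N)$ is \emph{surjective}; this is a nontrivial input (Proposition~5.3 of \cite{ENO}), and from it one concludes that the target, being a quotient of a pointed fusion category, is pointed.
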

\begin{proof}
Suppose $\nmodule$ is a pointed $G$-stable $\qD$-module category.
Consider $\M=\qC\boxtimes_{\qD}\nmodule$. By Frobenious reciprocity
we have \[
Fun_{\qC}(\mmodule,\mmodule)\cong\oplus_{g\in G}Fun_{\qD}(\nmodule,\qC_{g}\dtimes\nmodule).\]
 Since for any $g\in G$ it holds that $\C_{g}\dtimes\N\cong\N$ and
since all simple functors in $Fun_{\qD}(\N,\N)$ are invertible, we
see that the same happens in $Fun_{\qC}(\M,\M)$, that is- $\M$ is
pointed over $\C$ and $\C$ is group theoretical.

Conversely, suppose that $\qC$ is group theoretical and suppose $\mmodule$
is an indecomposable pointed $\qC$-module category. We thus know
that any simple functor $F:\M\rightarrow\M$ is invertible. We also
know that there is a subgroup $H<G$ and an indecomposable $\C_{H}$-module
category $\N$ such that $\M\cong\C\stimes_{\C_{H}}\N=\oplus_{gH\in G/H}\C_{g}\stimes_{\D}\N$
Since $\M$ is indecomposable, it is easy to see that for every $g\in G$
there is some simple $\C$-endofunctor $F:\M\rightarrow\M$ such that
$F(\N)\subseteq\C_{g}\stimes_{\D}\N$. But such a functor must be
invertible, and it follows that $F$ induces an equivalence of $\D$
module categories $\N\cong\C_{g}\stimes_{\D}\N$. Thus $\N$ is $G$-invariant.

Next, we would like to prove that $\D_{\N}^{*}$ is pointed. By Frobenius
reciprocity we have $Fun_{\C}(\M,\M)\cong\oplus_{gH\in G/H}Fun_{\C_{H}}(\N,\C_{g}\stimes\N)$
Thus the category $Fun_{\C_{H}}(\N,\N)$ is a sub-fusion category
of the pointed category $Fun_{\C}(\M,\M)$ and is therefore pointed.
We have a forgetful functor $Fun_{\C_{H}}(\N,\N)\rightarrow Fun_{\D}(\N,\N)$
which is known to be onto (see Proposition 5.3 of \cite{ENO}). This
implies that $Fun_{\D}(\N,\N)$ is pointed, as required.
\end{proof}

\begin{rem}
The above criterion is actually equivalent to the one given in Corollary 3.10 of \cite{GNN},
namely, $\qC$ is group theoretical if and only if $\mathcal{Z}(\qD)$
contains a $G$-stable Lagrangian subcategory.
In order to explain why the two conditions are equivalent,
recall first the definitions of a Lagrangian subcategory and of the action of $G$ on $\mathcal{Z}(\qD)$.
A Lagrangian subcategory of $\mathcal{Z}(\qD)$ is a subcategory $\mathcal{E}$
such that $\mathcal{E}'=\mathcal{E}$ (see Section 3.2 of \cite{DGNO} for the definition of ').
The action of $G$ on $\mathcal{Z}(\qD)$ is defined as follows: the
center $\mathcal{Z}(\qD)$ can be considered as $Fun_{\qD\boxtimes\qD^{op}}(\qD,\qD)$,
the category of $\qD$-bimodule endofunctors of $\qD$. Given an element
$g\in G$ and a $\qD$-bimodule functor $F:\qD\rightarrow\qD$, the
functor $g(F):\qD\rightarrow\qD$ is defined via \[
\xymatrix{\qD\ar[r]^{\cong\,\,\,\,\,\,\,\,\,\,\,\,\,\,\,\,\,\,\,\,\,\,\,} & \qC_{g}\dtimes\qD\dtimes\qC_{g^{-1}}\ar[rr]^{1\dtimes F\dtimes1} &  & \qC_{g}\dtimes\qD\dtimes\qC_{g^{-1}}\ar[r]^{\,\,\,\,\,\,\,\,\,\,\,\,\,\,\,\,\,\,\,\,\,\,\,\cong} & \qD}
.\]
In Theorem 4.66 of \cite{DGNO} it was proved that there is an equivalence between Lagrangian subcategory of $\mathcal{Z}(\qD)$
and pointed $\qD$ module categories. Since $G$ acts on the center of $\qD$, it also acts on the set of
Lagrangian subcategories of $\mathcal{Z}(\qD)$.
Let $\nmodule$ be a $\qD$-module category and let $\mathcal{L}$
be the corresponding Lagrangian subcategory of $\mathcal{Z}(\qD)$.
From the above definition of the $G$ action on $\mathcal{Z}(\qD)$
it is possible to see that for any $g\in G$, the lagrangian subcategory which corresponds to $g\cdot\nmodule$
is $g\cdot\mathcal{L}$.
Therefore, $\mathcal{Z}(\qD)$ admit a $G$-stable Lagrangian subcategory
if and only if $\qD$ has a pointed $G$-stable module category.
\end{rem}

\subsection{Functor categories as equivariantizations}

In this subsection we shall describe the category of $\C$-module
functors between the module categories $\M(\N,H,\Phi,v,\beta)$ and
$\M(\N',H',\Phi',v',\beta')$. For simplicity we shall denote these
categories as $\M_{1}$ and $\M_{2}$ respectively.

Consider the category $Fun_{\D}(\M_{1},\M_{2})$. This is a $k$-linear
category which by Theorem 2.16 of \cite{ENO} is semisimple.
\begin{lem}
There is a natural $G$-action on $Fun_{\qD}(\mmodule_{1},\mmodule_{2})$
induced by the structure of $\qC$-module categories on $\mmodule_{1}$
and $\mmodule_{2}$.\end{lem}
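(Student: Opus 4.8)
The plan is to construct the $G$-action explicitly and then check the axioms. First I would recall that for each $g\in G$ the bimodule category $\qC_g$ is invertible over $\qD$, with inverse $\qC_{g^{-1}}$, so tensoring with $\qC_g$ over $\qD$ sends $\qD$-module functors to $\qD$-module functors. Concretely, given a $\qD$-module functor $T:\mmodule_1\to\mmodule_2$, I would define $g\cdot T$ as the composition
\[
\mmodule_1\;\xrightarrow{\ \sim\ }\;\qC_{g^{-1}}\dtimes\qC_g\dtimes\mmodule_1\;\xrightarrow{\ 1\dtimes 1\dtimes(\text{act})\ }\;\qC_{g^{-1}}\dtimes\qC_g\dtimes\mmodule_1\;\xrightarrow{\ 1\dtimes 1\dtimes T\ }\;\qC_{g^{-1}}\dtimes\qC_g\dtimes\mmodule_2\;\xrightarrow{\ (\text{act})\ }\;\mmodule_2,
\]
where the unnamed equivalences use the fact that, since $\mmodule_i$ is a $\qC$-module category and hence in particular a $\qC_g$-module category, the action functors $\qC_g\dtimes\mmodule_i\to\mmodule_i$ are equivalences (this is the content of $\qC_g$ being invertible as a $\qD$-bimodule together with the $\qC$-module structure). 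Equivalently, and perhaps more cleanly, I would use the action equivalences $a_g:\qC_g\dtimes\mmodule_1\to\mmodule_1$ and $a'_g:\qC_g\dtimes\mmodule_2\to\mmodule_2$ coming from the $\qC$-module structures, and set $g\cdot T=a'_g\circ(1_{\qC_g}\dtimes T)\circ a_g^{-1}$; one checks this is again a morphism of $\qD$-module functors because $T$ is and because $a_g,a'_g$ are $\qD$-bimodule functors in the appropriate sense.

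Next I would verify functoriality: a natural transformation $\theta:T\Rightarrow T'$ induces $g\cdot\theta:=a'_g*(1_{\qC_g}\dtimes\theta)*(a_g^{-1})$, giving a functor $g\cdot(-):Fun_{\qD}(\mmodule_1,\mmodule_2)\to Fun_{\qD}(\mmodule_1,\mmodule_2)$, and this functor is an equivalence with quasi-inverse $g^{-1}\cdot(-)$ up to the associativity constraint of $\qC$. Then I would produce the coherence data making $g\mapsto g\cdot(-)$ a monoidal functor from $\underline{G}$ (the discrete monoidal category on $G$) to $\mathrm{Aut}_k(Fun_{\qD}(\mmodule_1,\mmodule_2))$: the isomorphisms $(g\cdot(h\cdot T))\cong(gh)\cdot T$ come from the tensor structure isomorphisms $\qC_g\dtimes\qC_h\cong\qC_{gh}$ of $\qC$ (i.e.\ the $M_{g,h}$'s) applied on both $\mmodule_1$ and $\mmodule_2$, compatibly with the module-action associativity isomorphisms $\eta$. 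This is the step I expect to be the main obstacle: assembling the various structure isomorphisms (the $M_{g,h}$ of $\qC$, the module associativity constraints of $\mmodule_1$ and $\mmodule_2$, and the associator $\alpha$) into a genuine natural transformation of functors that satisfies the monoidal-functor hexagon/pentagon coherence. However this is essentially a diagram chase that parallels the verification, already implicit in earlier sections, that $\psi_a$'s and $\eta(a,b)$'s assemble into a module structure; the $\alpha$-contributions on $\mmodule_1$ and $\mmodule_2$ cancel precisely because $T$ is sandwiched symmetrically between the two actions.

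Finally I would remark that the action is natural in the evident sense: it is the restriction to $\qD$-module functors of an honest action, and by construction composition of functors $Fun_{\qD}(\mmodule_2,\mmodule_3)\times Fun_{\qD}(\mmodule_1,\mmodule_2)\to Fun_{\qD}(\mmodule_1,\mmodule_3)$ is $G$-equivariant, which is what is needed for the subsequent identification of $Fun_{\qC}(\mmodule_1,\mmodule_2)$ with the equivariantization $Fun_{\qD}(\mmodule_1,\mmodule_2)^G$. I would keep the write-up short, giving the formula for $g\cdot T$ and the source of the coherence isomorphisms, and leaving the (lengthy but routine) coherence verification to the reader or to a reference, noting that it is formally identical to the coherence checks carried out in Sections \ref{sec:The-first-two} and \ref{sec:The-isomorphism-condition}.
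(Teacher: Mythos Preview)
Your proposal is correct and takes essentially the same approach as the paper: the paper defines $g\cdot F$ by the composite $\phi_g\circ(1_{\qC_g}\dtimes F)\circ\psi_g^{-1}$, where $\psi_g:\qC_g\dtimes\mmodule_1\cong\mmodule_1$ and $\phi_g:\qC_g\dtimes\mmodule_2\cong\mmodule_2$ are the action equivalences, and then simply asserts that this gives a $G$-action in the sense of \cite{EG}. Your write-up is more detailed about the coherence data and its verification than the paper's, but the construction is identical.
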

\begin{proof}
There are $\qD$-module equivalences $\psi_{g}:\qC_{g}\dtimes\mmodule_{1}\cong\mmodule_{1}$
and $\phi_{g}:\qC_{g}\dtimes\mmodule_{2}\cong\mmodule_{2}$, for every
$g\in G$, defined by the $\qC$-module structure on $\mmodule_{1}$
and $\mmodule_{2}$. Let $F:\mmodule_{1}\rightarrow\mmodule_{2}$
be a $\qD$-module morphism, we define $g\cdot F$ to be the following
functor \[
\xymatrix{\mmodule_{1}\ar[r]^{\psi_{g}^{-1}\,\,\,\,\,\,\,\,} & \qC_{g}\dtimes\mmodule_{1}\ar[r]^{Id_{\qC_{g}}\dtimes F} & \qC_{g}\dtimes\mmodule_{2}\ar[r]^{\,\,\,\,\,\,\,\,\phi_{g}} & \mmodule_{2}}
.\]
 One can easily check that this defines an action of the group $G$
on the category $Fun_{\D}(\mmodule_{1},\mmodule2)$ in the sense of
\cite{EG} 

\end{proof}
Since we have a $G$-action on $Fun_{\qD}(\mmodule_{1},\mmodule_{2})$,
we can talk about the equivariantization $Fun_{\qD}(\mmodule_{1},\mmodule_{2})^{G}$.
By definition, an object in $Fun_{\qD}(\mmodule_{1},\mmodule_{2})^{G}$
is a pair $(F,\{T_{g}\}_{g\in G})$, where $T_{g}:g\cdot F\rightarrow F$
are natural equivalences which satisfy a certain coherence condition
(for the exact definition, see \cite{EG}). Let $F:\mmodule_{1}\rightarrow\mmodule_{2}$
be a $\qD$-module functor.

To give $F:\M_{1}\rightarrow\M_{2}$ a structure of a $\C$-module
functor is the same thing as to give, for every $g\in G$, a natural
isomorphism between the functors $\C_{g}\stimes_{\D}\M_{1}\rightarrow\M_{1}\stackrel{F}{\rightarrow}\M_{2}$
and $\C_{g}\stimes_{\D}\M_{1}\stackrel{1\stimes F}{\rightarrow}\C_{g}\stimes_{\D}\M_{2}\rightarrow\M_{2}$.
It can easily be seen that this is equivalent to give $F$ a structure
of an object in the equivariantization category.

Let us conclude this discussion by the following
\begin{prop}
The category $Fun_{\C}(\M_{1},\M_{2})$ is equivalent to the equivariantization
$Fun_{\D}(\M_{1},\M_{2})^{G}$ of the category $Fun_{\D}(\M_{1},\M_{2})$
with respect to the aforementioned $G$-action.\end{prop}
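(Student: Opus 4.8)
The plan is to unwind both sides along the $G$-grading and compare them functor-by-functor. First I would note that as a $\qD$-module category, $\M_i$ carries the equivalences $\psi_g,\phi_g$ of the lemma above, and these assemble into the $G$-action on $Fun_{\qD}(\M_1,\M_2)$ already constructed. So an object of the equivariantization is a $\qD$-module functor $F:\M_1\to\M_2$ together with natural isomorphisms $T_g:g\cdot F\xrightarrow{\sim}F$ satisfying the coherence $T_{gh}=T_g\circ(g\cdot T_h)$ (in the sense of \cite{EG}). Unpacking the definition of $g\cdot F$ via $\psi_g^{-1}$, $Id_{\qC_g}\dtimes F$ and $\phi_g$, the datum $T_g$ is exactly a natural isomorphism between the two composites $\qC_g\dtimes\M_1\to\M_1\xrightarrow{F}\M_2$ and $\qC_g\dtimes\M_1\xrightarrow{1\dtimes F}\qC_g\dtimes\M_2\to\M_2$.

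Next I would identify this package with the data of a $\qC$-module structure on $F$. A $\qC$-module functor structure on a $\qD$-module functor $F$ is, by definition, a family of natural isomorphisms $s_{X,m}:F(X\otimes m)\xrightarrow{\sim}X\otimes F(m)$ for $X\in\qC$, $m\in\M_1$, compatible with the associativity constraints and with the already-given $\qD$-module structure on $F$. Since $\qC=\bigoplus_{g\in G}\qC_g$, such a family is equivalent to giving, for each $g$, a natural isomorphism of the two functors $\qC_g\dtimes\M_1\to\M_2$ displayed above — that is, precisely a $T_g$ — and the pentagon/associativity axioms for $s$ decompose over pairs $g,h$ into exactly the coherence condition $T_{gh}=T_g\circ(g\cdot T_h)$ together with the $\qD$-linearity already built in. Thus objects of $Fun_{\qC}(\M_1,\M_2)$ correspond bijectively to objects of $Fun_{\qD}(\M_1,\M_2)^G$. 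The same bookkeeping on morphisms: a morphism $(F,\{T_g\})\to(F',\{T'_g\})$ in the equivariantization is a $\qD$-module natural transformation commuting with the $T_g$'s, which is exactly a $\qC$-module natural transformation. This gives a fully faithful functor $Fun_{\qC}(\M_1,\M_2)\to Fun_{\qD}(\M_1,\M_2)^G$, essentially surjective by the object-level correspondence, hence an equivalence.

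The main obstacle is the coherence matching: one must check that the associativity axiom for the $\qC$-module functor structure $s$, when restricted to objects in homogeneous components $\qC_g$ and $\qC_h$, reproduces precisely the cocycle-type condition defining a morphism in the equivariantization, including the way the associators $\alpha$ of $\qC$ (which enter through the extension data) interact with the $\qD$-module associativity constraints of $\M_1,\M_2$. This is the point where one has to be careful that no extra scalar or coherence obstruction appears; but since both the $\qC$-module structure on each $\M_i$ and the $G$-action on $Fun_{\qD}(\M_1,\M_2)$ are built from the \emph{same} equivalences $\psi_g,\phi_g$ and the same associators, the two coherence conditions are literally the same equation read two ways, and the verification is a diagram chase rather than a genuine computation. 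I would present this matching as the core lemma and leave the remaining naturality and functoriality checks as routine.
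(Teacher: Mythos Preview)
Your proposal is correct and follows essentially the same approach as the paper: the paper's argument is precisely the observation (made in the paragraph immediately preceding the proposition) that giving a $\qD$-module functor $F$ a $\qC$-module structure amounts to specifying, for each $g\in G$, a natural isomorphism between the two displayed composites $\qC_g\dtimes\M_1\to\M_2$, and that this is exactly the data of an equivariant structure on $F$. Your write-up is in fact more detailed than the paper's, which leaves the coherence matching and the morphism-level check implicit.
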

\begin{rem}
Let $\mmodule$ be an indecomposable $\qC$-module category. Although
$\qC_{\mmodule}^{*}\triangleq Fun_{\qC}(\mmodule,\mmodule)$ is a
fusion category, $Fun_{\qD}(\mmodule,\mmodule)$ is, in general, only
a multifusion category because $\mmodule$ might be decomposable
as $\qD$-module category. Equivariantization has only been defined
in the context of fusion categories. However, the definition in context
of multifusion categories is mutatis mutandis. Notice that is case
of multifusion equivariantization we don't always have the $Rep(G)$
subcategory supported on the trivial object.
\end{rem}
In the next section we will give an intrinsic description of the functor
categories, as categories of bimodules.

\section{An intrinsic description by algebras and modules\label{sec:An-intrinsic-description}}

The goal of this section is to explain more concretely the action
of the grading group on indecomposable module categories, the action
of the grading group on $Aut_{\D}(\N)$, the obstructions and their
solutions.

In \cite{O1} Ostrik showed that any indecomposable module category
over a fusion category $\C$ is equivalent as a module category to
the category $Mod_{\C}-A$ for some semisimple indecomposable algebra
$A$ in $\C$. In this section we will realize all the objects described
in the previous sections by using algebras and modules inside $\C$.
As before, we assume that $\C=\bigoplus_{g\in G}\C_{g}$, we denote
$\C_{1}$ by $\D$ and $Aut_{\D}(\N)$ by $\Gamma$.

\subsection{The action of $G$ on indecomposable module categories}

Assume that $A$ is a semisimple indecomposable algebra inside $\D$.
Let $\N=Mod_{\D}-A$ be the category of right $A$-modules inside
$\D$. We denote by $Mod_{\C_{g}}-A$ the category of $A$-modules
with support in $\C_{g}$. We claim the following:
\begin{lem}
We have an equivalence of $\D$-module categories $\C_{g}\stimes_{\D}\N\equiv Mod_{\C_{g}}-A$.\end{lem}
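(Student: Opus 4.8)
The plan is to deduce this from the description of the induced module category already established in the preliminary section, together with the $G$-grading of $\C$. Since $A$ is an algebra in $\D=\C_{1}$ it is in particular an algebra in $\C$, and by the earlier lemma we have $Ind_{\D}^{\C}(\N)\cong Mod_{\C}-A$, while the following remark identifies $Ind_{\D}^{\C}(\N)$ with $\C\stimes_{\D}\N$. Thus there is an equivalence of (left) $\D$-module categories $\C\stimes_{\D}\N\cong Mod_{\C}-A$, implemented on objects by $X\stimes N\mapsto X\otimes N$ (tensor product taken in $\C$), the right $A$-module structure on $X\otimes N$ being induced from the one on $N$. The whole argument then consists of splitting both sides according to the grading and matching the summands.

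First I would decompose the left-hand side: $\C=\bigoplus_{g\in G}\C_{g}$ as a $\D$-bimodule category, and the relative tensor product commutes with finite direct sums, so $\C\stimes_{\D}\N\cong\bigoplus_{g\in G}\C_{g}\stimes_{\D}\N$; because left multiplication satisfies $\C_{1}\otimes\C_{g}\subseteq\C_{g}$, this is a decomposition of $\D$-module categories, each summand $\C_{g}\stimes_{\D}\N$ being a $\D$-module category via the left $\D$-action on $\C_{g}$. For the right-hand side, let $(Y,\mu\colon Y\otimes A\to Y)$ be a right $A$-module in $\C$ and write $Y=\bigoplus_{g}Y_{g}$ with $Y_{g}\in\C_{g}$; since $A\in\C_{1}$ we have $Y_{g}\otimes A\in\C_{g}$, and since distinct homogeneous components of a $G$-graded category admit no nonzero morphisms between them, $\mu$ is the direct sum of maps $\mu_{g}\colon Y_{g}\otimes A\to Y_{g}$, with the associativity and unit axioms (and morphisms of $A$-modules) decomposing likewise. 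Hence $Mod_{\C}-A=\bigoplus_{g\in G}Mod_{\C_{g}}-A$, again as $\D$-module categories since the left $\C_{1}$-action preserves each $\C_{g}$.

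It remains to check that the equivalence $\C\stimes_{\D}\N\cong Mod_{\C}-A$ respects the two decompositions. For $X\in\C_{g}$ and $N\in\N$ the object $X\otimes N$ lies in $\C_{g}$ (the underlying object of $N$ lies in $\D=\C_{1}$ and $\C_{g}\otimes\C_{1}\subseteq\C_{g}$), so the equivalence carries the summand $\C_{g}\stimes_{\D}\N$ into the summand $Mod_{\C_{g}}-A$; an equivalence that sends each member of one orthogonal family of summands into the correspondingly indexed member of another is necessarily an equivalence on each summand (fullness, faithfulness and essential surjectivity all transfer by the orthogonality of the decompositions). This yields the asserted equivalence of $\D$-module categories $\C_{g}\stimes_{\D}\N\cong Mod_{\C_{g}}-A$, and as a sanity check the case $g=1$ reads $\D\stimes_{\D}\N\cong Mod_{\D}-A$, i.e. $\N\cong\N$. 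The step that requires the most care is precisely this last one: making sure the two direct-sum decompositions are of $\D$-module categories and that the equivalence of the preceding lemma is compatible with the grading — which is where one uses that it is implemented by $X\stimes N\mapsto X\otimes N$ and that $A$ is homogeneous of degree $e$. One could instead argue directly that the $\D$-balanced functor $(X,N)\mapsto X\otimes N$ induces a $\D$-module functor $\C_{g}\stimes_{\D}\N\to Mod_{\C_{g}}-A$ and that it is an equivalence because $\C_{g}$ is an invertible $\D$-bimodule (tensoring with $\C_{g^{-1}}$ reduces it to the case $g=1$), but the grading argument has the advantage of not having to track those identifications explicitly.
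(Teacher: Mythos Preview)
Your proof is correct and follows essentially the same approach as the paper: use the preliminary identification $\C\stimes_{\D}\N\cong Mod_{\C}-A$ via $X\stimes M\mapsto X\otimes M$, decompose each side according to the $G$-grading, and observe that the equivalence respects the decompositions. You supply more detail (on why $Mod_{\C}-A$ decomposes gradedly and why a graded equivalence restricts to equivalences on summands) than the paper, which simply asserts these points, but the strategy is identical.
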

\begin{proof}
We have already seen in Section \ref{sec:Preliminaries} that we have
an equivalence of $\C$-module categories \[
\C\stimes_{\D}\N\equiv Mod_{\C}(A)\]
 which is given by $X\stimes M\mapsto X\otimes M$. As a $\D$-modules
category, the left hand category decomposes as $\bigoplus_{g\in G}\C_{g}\stimes_{\D}\N$
and the right hand category decomposes as $\bigoplus_{g\in G}Mod_{\C_{g}}-A$.
It is easy to see that the above equivalence translates one decomposition
into the other, and therefore the functor $\C_{g}\stimes_{\D}\N\rightarrow Mod_{\C_{g}}-A$
given by $X\stimes M\mapsto X\otimes M$ is an equivalence of $\D$-module
categories.
\end{proof}
Next, we understand how we can describe functors by using bimodules.
\begin{lem}
\label{lem:Functors}Let $\N=Mod_{\D}-A$ and $\N'=Mod_{\D}-A'$,
and let $g\in G$. Then every functor $F:\N\rightarrow\C_{g}\stimes_{\D}\N'$
is of the form $F(T)=T\otimes_{A}Y$ for some $A-A'$ bimodule $Y$
with support in $\C_{g}$, here we identify $\C_{g}\stimes_{\D}\N'$
with $Mod_{\C_{g}}-A'$ as above.\end{lem}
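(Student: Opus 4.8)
The plan is to reduce everything to the two theorems of Ostrik that have already been used repeatedly in the paper: the classification of module functors by bimodules, and the identification $\C_g\stimes_\D\N'\cong Mod_{\C_g}-A'$ established in the previous lemma. The key point is that functors $\N\to\C_g\stimes_\D\N'$ are the same as functors $Mod_\D-A\to Mod_{\C_g}-A'$, and these can be captured by bimodules living inside $\C$ rather than inside $\D$.

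First I would pass from $\D$-module functors to $\C$-module functors by an induction/Frobenius-reciprocity argument. Since $\C_g\stimes_\D\N'\cong Mod_{\C_g}-A'$ sits inside $Mod_\C-A'\cong\C\stimes_\D\N'$ as a $\D$-submodule-summand, a $\D$-module functor $F:\N\to Mod_{\C_g}-A'$ extends (or rather is detected) by looking at the induced $\C$-module functor $Ind_\D^\C(\N)\to Mod_\C-A'$; equivalently, by Frobenius reciprocity $Fun_\D(\N,Mod_{\C_g}-A')$ is a block of $Fun_\D(\N,Mod_\C-A')\cong Fun_\C(Ind_\D^\C\N,Mod_\C-A')=Fun_\C(Mod_\C-A,Mod_\C-A')$. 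Now apply Ostrik's theorem in $\C$: $Fun_\C(Mod_\C-A,Mod_\C-A')\cong bimod_\C-A-A'$, the category of $A$-$A'$ bimodules inside $\C$, with the functor attached to a bimodule $Y$ being $T\mapsto T\otimes_A Y$.

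Next I would identify which bimodules $Y$ correspond to functors landing in the $\C_g$-block. Since $\C=\bigoplus_{h\in G}\C_h$ is $G$-graded, every $A$-$A'$ bimodule $Y$ in $\C$ decomposes as $\bigoplus_{h\in G} Y_h$ with $Y_h$ supported in $\C_h$ (here we use that $A,A'$ are supported in $\D=\C_e$, so the $\C_h$-components are sub-bimodules). The functor $T\mapsto T\otimes_A Y$ sends $\N=Mod_\D-A$ into $Mod_{\C_h}-A'$ on the $Y_h$-summand. Hence $F:\N\to\C_g\stimes_\D\N'=Mod_{\C_g}-A'$ lands entirely in the $\C_g$-block precisely when $Y=Y_g$ is supported in $\C_g$, giving the asserted form $F(T)=T\otimes_A Y$ with $Y$ an $A$-$A'$ bimodule supported in $\C_g$.

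The main obstacle I expect is bookkeeping rather than anything deep: making precise that the decomposition of $Fun_\C(Mod_\C-A,Mod_\C-A')$ into blocks indexed by $G$ (coming from the grading-induced decomposition of the bimodule category $bimod_\C-A-A'$) matches up, summand by summand, with the decomposition $Fun_\D(\N,Mod_\C-A')\cong\bigoplus_{g\in G}Fun_\D(\N,Mod_{\C_g}-A')$, and that under this matching the functor $T\mapsto T\otimes_A Y_g$ is exactly the module functor associated to $Y_g$ by Ostrik's equivalence in $\D$. This is a compatibility check between the $\C$-level and $\D$-level versions of Ostrik's theorem; once one notes that $\otimes_A$ on objects is the same operation whether one views things over $\D$ or over $\C$, the identification is forced, and one only has to observe that $Y_g$, being supported in $\C_g$, automatically has all the structure it needs to be an honest $A$-$A'$ bimodule in $\C$ (the algebras acting from $\D$).
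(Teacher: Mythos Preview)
Your argument is correct, but the paper takes a more direct route. Rather than passing through Frobenius reciprocity and the $\C$-level version of Ostrik's theorem, the paper simply evaluates the functor at the regular object: set $Y=F(A)$, observe that the multiplication $A\otimes A\to A$ induces (via functoriality of $F$) a left $A$-action $A\otimes F(A)\to F(A)$, so $Y$ is an $A$-$A'$ bimodule; then use exactness of $F$ and the fact that every object of $\N$ is a quotient of some $X\otimes A$ to conclude $F(T)\cong T\otimes_A Y$. The support of $Y$ is in $\C_g$ because $F(A)$ lands in $Mod_{\C_g}-A'$.

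The two approaches buy different things. The paper's argument is self-contained and elementary: it reproves the relevant instance of Ostrik's bimodule description from scratch in two lines, and the support condition is immediate. Your approach is more structural---it locates the lemma as the $g$-th graded piece of the known equivalence $Fun_\C(Mod_\C\text{-}A,Mod_\C\text{-}A')\cong bimod_\C\text{-}A\text{-}A'$---but at the cost of the bookkeeping you yourself flag: one must check that the Frobenius-reciprocity equivalence really sends the $\C$-functor $T\mapsto T\otimes_A Y$ to its restriction $T\mapsto T\otimes_A Y$ on $Mod_\D\text{-}A$, and that the two $G$-decompositions (of the bimodule category and of the functor category) match. These checks are routine, so your proof goes through; the paper simply avoids them.
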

\begin{proof}
The proof follows the lines of the remark after Proposition 2.1 of
\cite{O2}. We simply consider $F(A)$. The multiplication map $A\otimes A\rightarrow A$
gives us a map $A\otimes F(A)\rightarrow F(A)$, thus equipping $F(A)$
with a structure of a left $A$-module. We now see that $F(A)$ is
indeed an $A-A'$ bimodule. Since the category $\N$ is semisimple
the functor $F$ is exact. Since every object in $\N$ s a quotient
of an object of the form $X\otimes A$ for some $X\in\C$, we see
that $F$ is given by $F(T)=T\otimes_{A}F(A)$.\end{proof}
\begin{rem*}
Notice that by applying the (2-)functor $\C_{g^{-1}}\stimes_{\D}-$
we see that every functor $\C_{g}\stimes_{\D}\N'\rightarrow\N$ is
given by tensoring with some $A'-A$ bimodule with support in $C_{g^{-1}}$.
\end{rem*}

\subsection{The outer action of $H$ on the group $Aut_{\D}(\N)$. The first
two obstructions}

Assume, as in the rest of the paper, that we have a subgroup $H<G$
and a module category $\N=Mod_{\D}-A$, and assume that $F_{h}:\nmodule\cong\qC_{h}\dtimes\nmodule$
for every $h\in H$. It follows from Lemma \ref{lem:Functors} that
this equivalence is of the form $F_{h}(M)=M\otimes_{A}A_{h}$ for
some $A-A$ bimodule $A_{h}$ with support in $\C_{h}$. The fact
that this functor is an equivalence simply means that the bimodule
$A_{h}$ is an invertible $A-A$ bimodule. In other words- there is
another $A-A$ bimodule $B_{h}$ (whose support will necessary be
in $\C_{h^{-1}}$) such that $A_{h}\otimes_{A}B_{h}\cong B_{h}\otimes_{A}A_{h}\cong A$.
By Lemma \ref{lem:Functors} we can identify the group $\Gamma=Aut_{\D}(\N)$
with the group of isomorphisms classes of invertible $A-A$ bimodules
with support in $\D$.

Denote by $\Lambda$ the group of isomorphisms classes all invertible
$A-A$ bimodules with support in $\qC_{H}$. Since every invertible
$A-A$ bimodule is supported on a single grading component, we have
a map $p:\Lambda\rightarrow H$ which assigns to an invertible $A-A$
bimodule the graded component it is supported on. We thus have a short
exact sequence \begin{equation}
1\rightarrow\Gamma\rightarrow\Lambda\rightarrow H\rightarrow1.\label{sequence}\end{equation}

Using this sequence, we can understand the outer action of $H$ on
$Aut_{\D}(\N)$, and the first and the second obstruction. The outer
action is given in the following way: for $h\in H$, choose an invertible
$A-A$ bimodule $A_{h}$ with support in $\C_{h}$. Choose an inverse
to $A_{h}$ and denote it by $A_{h}^{-1}$. Then the action of $h\in H$
on some invertible bimodule $M$ with support in $\D$ is the following
conjugation: \[
h\cdot M=A_{h}\otimes_{A}M\otimes_{A}A_{h}^{-1}.\]
 This action depends on the choice we made of the invertible bimodule
$A_{h}$.

The first obstruction is the possibility to lift this outer action
to a proper action. In other words, it says that we can choose the
$A_{h}$'s in such a way that conjugation by $A_{h}\otimes A_{h'}$
is the same as conjugation by $A_{hh'}$, or in other words, in such
a way that for every $h,h'\in H$, the invertible bimodule \[
B_{h,h'}=A_{h}\otimes_{A}A_{h'}\otimes_{A}A_{hh'}^{-1}\]
 will be in the center of $\Gamma$ (again- we identify $\Gamma$
with the group of invertible bimodules with support in $\D$). A solution
for the first obstruction will be a choice of a set of such bimodules
$A_{h}$.

The second obstruction says that the cocycle $(h,h')\mapsto B_{h,h'}$
is trivial in $H^{2}(H,Z(Aut_{\D}(\N))$. This simply says that we
can change $A_{h}$ to be $A_{h}\otimes_{A}D_{h}$ for some $D_{h}\in Z(Aut_{\D}(\N))$,
in such a way that \[
(A_{h}\otimes_{A}D_{h})\otimes_{A}(A_{h'}\otimes_{A}D_{h'})\otimes_{A}(A_{hh'}\otimes_{A}D_{hh'})^{-1}\cong A\]
 as $A$-bimodules. A solution for the second obstruction will be
a choice of such a set $D_{h}$ of bimodules.

It is easier to understand the first and the second obstruction together:
we have one big obstruction- the sequence \ref{sequence} should split,
and we need to choose a splitting. First, if the sequence splits,
then we can lift the outer action into a proper action, and we need
to choose such a lifting. Then, the obstruction to the splitting \textit{with
the chosen action} is given by a two cocycle with values in the center
of $\Gamma$. Thus, a solution for both the first and the second obstruction
will be a choice of bimodules $A_{h}$ for every $h\in H$ such that
the support of $A_{h}$ is in $\C_{h}$ and such that $A_{h}\otimes_{A}A_{h'}\cong A_{hh'}$
for every $h,h'\in H$. Following the line of Section \ref{sec:The-isomorphism-condition},
we see that we are interested in splittings only up to conjugation
by an element of $\Gamma$.

\subsection{The third obstruction}

Assume then that we have a set of bimodules $A_{h}$ as in the end
of the previous subsections. We would like to understand now the third
obstruction.

Recall that we are trying to equip $\N$ with a structure of a $\C_{H}$-module
category. By Ostrik's Theorem (see \cite{O1}), there is an object
$\N\in\N$ such that $A\cong\underline{Hom}_{\D}(N,N)$ where by $\underline{Hom}_{\D}$
we mean the internal Hom of $\N$, where we consider $\N$ as a $\D$-module
category. So far we gave equivalences $F_{h}:\N\rightarrow\C_{h}\stimes_{\D}\N$.
If $\N$ were a $\C_{H}$-module category via the choices of these
equivalences, then the internal $\C_{H}$-Hom, $\tilde{A}=\underline{Hom}_{\C_{H}}(N,N)$
would be \[
\tilde{A}=\bigoplus_{h\in H}A_{h}.\]

We thus see that to give on $\N$ a structure of a $\C_{H}$-module
category is the same as to give on $\tilde{A}$ a structure of an
associative algebra. For every $h,h'\in H$, choose an isomorphism
of $A-A$ bimodules $A_{h}\otimes_{A}A_{h'}\rightarrow A_{hh'}$.
Notice that since these are invertible $A-A$ bimodules, there is
only one such isomorphism up to a scalar.

Now for every $h,h',h''\in H$, we have two isomorphisms $(A_{h}\otimes_{A}A_{h'})\otimes_{A}A_{h''}\rightarrow A_{hh'h''}$,
namely \[
(A_{h}\otimes_{A}A_{h'})\otimes_{A}A_{h''}\rightarrow A_{hh'}\otimes_{A}A_{h''}\rightarrow A_{hh'h''}\]
 and \[
(A_{h}\otimes_{A}A_{h'})\otimes_{A}A_{h''}\rightarrow A_{h}\otimes_{A}(A_{h'}\otimes_{A}A_{h''})\rightarrow A_{h}\otimes_{A}A_{h'h''}\rightarrow A_{hh'h''}.\]
 This two isomorphisms differ by a scalar $b(h,h',h'')$. The function
$(h,h',h'')\mapsto b(h,h',h'')$ is a three cocycle which is the third
obstruction. A solution to the third obstruction will thus be a choice
of isomorphisms $A_{h}\otimes_{A}A_{h'}\rightarrow A_{hh'}$ which
will make $\tilde{A}$ an associative algebra. Once we have such a
choice, we can change it by some two cocycle to get another solution.

\subsection{Functor categories}

We end this section by giving an intrinsic description of functor
categories. Assume that we have two module categories $\M_{1}=\M(\N,H,\Phi,v,\beta)$,
and $\M_{2}=\M(\N',H',\Phi',v',\beta')$. Let us denote $H'$ by $K$.
As we have seen in the previous subsections, if $\N\cong Mod_{\D}-A_{1}$
and $\N'\cong Mod_{\D}-B_{1}$, then $\M_{1}\cong Mod_{\C}-A$ and
$\M_{2}\cong Mod_{\C}-B$, where $A$ is an algebra of the form $\oplus_{h\in H}A_{h}$,
and a similar description holds for $B$.

The functor category $Fun_{\C}(\M_{1},\M_{2})$ is equivalent to the
category of $A-B$-bimodules in $\C$. Since $A$ and $B$ have a
graded structure, we will be able to say something more concrete on
this category.

Let $X$ be an indecomposable $A-B$-bimodule in $\C$. It is easy
to see that the support of $X$ will be contained inside a double
coset of the form $HgK$ for some $g\in G$. Since the bimodules $A_{h}$
and $B_{k}$ are invertible, it is easy to see that the support will
be exactly this double coset.

Consider now the $g$-component $X_{g}$ of $X$. As can easily be
seen, this is an $A_{1}-B_{1}$-bimodule. Actually, more is true.
Consider the category $\C\stimes\C^{op}$. Inside this category we
have the algebra \[
(AB)_{g}=\oplus_{x\in H\cap gKg^{-1}}A_{x}\stimes B_{g^{-1}x^{-1}g}\]
 with the multiplication defined by the restricting the multiplication
from $A\boxtimes B\in\qC\boxtimes\qC^{op}$ . The category $\C$ is
a $\C\stimes\C^{op}$-module category in the obvious way, and we have
a notion of an $(AB)_{g}$-module inside $\C$.
\begin{lem}
The category of $(AB)_{g}$-modules inside $\C$ is equivalent to
the category of $A-B$-bimodules with support in the double coset
$HgK$. \end{lem}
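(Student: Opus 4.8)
The plan is to set up an explicit equivalence of categories in both directions and check they are mutually inverse. First I would unwind what an $(AB)_g$-module inside $\C$ is: since $(AB)_g = \oplus_{x\in H\cap gKg^{-1}}A_x\stimes B_{g^{-1}x^{-1}g}$ lives in $\C\stimes\C^{op}$ and acts on $\C$ via the standard $\C\stimes\C^{op}$-module structure (where $A\stimes B$ sends $W$ to $A\otimes W\otimes B$), a module structure on an object $M\in\C$ amounts to a map $(AB)_g\otimes M$-in-the-module-sense $\to M$, i.e. a collection of maps $A_x\otimes M\otimes B_{g^{-1}x^{-1}g}\to M$ compatible with the algebra structure. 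I would observe that the associativity/unit axioms force the left $A$-action (coming from $A_1=A$, the $x=1$ part, paired with $B_1$) and the right $B$-action to make $M$ an $A$-$B$-bimodule in $\C$, and that the remaining data is exactly what promotes this to a bimodule whose support is closed under the $HgK$-translation; conversely an $A$-$B$-bimodule $X$ supported on $HgK$ restricts its structure maps to give exactly an $(AB)_g$-module structure.

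The cleanest way to organize this is to use invertibility of the $A_h$ and $B_k$, as the paper already suggests. Concretely, I would define the functor from $A$-$B$-bimodules supported on $HgK$ to $(AB)_g$-modules simply by $X\mapsto X$ with the evidently-restricted action (the support condition guarantees $A_x\otimes X\otimes B_{g^{-1}x^{-1}g}$ lands back in $X$ when $x\in H\cap gKg^{-1}$, since $x\cdot HgK\cdot x^{-1}\cdot\text{shift} = HgK$), and in the other direction send an $(AB)_g$-module $M$ to $A\otimes_{A_{?}} M$-type induction — more precisely, to the bimodule $\bigoplus_{hgk\in HgK/\sim} A_h\otimes M\otimes B_k$ with the obvious $A$-$B$-action, where one picks coset representatives and uses the invertible bimodules $A_h$, $B_k$ to move between components. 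Because each $A_h$ and each $B_k$ is an invertible $A_1$-$A_1$- (resp. $B_1$-$B_1$-) bimodule, this induced object is well-defined up to canonical isomorphism and its $g$-component recovers $M$, while any $A$-$B$-bimodule supported on $HgK$ is reconstructed from its $g$-component in exactly this way (again by invertibility: $X_{h g k}\cong A_h\otimes_{A_1} X_g \otimes_{B_1} B_k$ canonically). So the two functors are mutually inverse.

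I would then note that both constructions are manifestly functorial and $k$-linear, send morphisms to morphisms, and respect direct sums, so they are equivalences of abelian (in fact semisimple) categories; one can also remark they are compatible with the further structure if needed, though for this lemma only the bare equivalence of categories is claimed. The key bookkeeping point — the thing to state carefully rather than wave at — is that the index set of the sum $(AB)_g = \oplus_{x\in H\cap gKg^{-1}}A_x\stimes B_{g^{-1}x^{-1}g}$ matches the stabilizer of $g$ under the $H\times K^{op}$-action on $HgK$, which is precisely $\{(x, g^{-1}x^{-1}g): x\in H\cap gKg^{-1}\}$; this is the same double-coset combinatorics used in the proof of Theorem \ref{mackey}, so I would invoke that lemma's setup rather than redo it.

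The main obstacle I anticipate is not conceptual but organizational: verifying that the algebra multiplication on $(AB)_g$ (restricted from $A\stimes B$) corresponds under the equivalence to the bimodule-composition structure, i.e. that an $(AB)_g$-module-map axiom unwinds exactly to the $A$-$B$-bimodule axioms on the induced object, including the interaction of the various $A_x\otimes B_{g^{-1}x^{-1}g}$ pieces with each other. This requires choosing the isomorphisms $A_x\otimes_A A_{x'}\to A_{xx'}$ (which exist and are unique up to scalar by invertibility, as used in the third-obstruction discussion) and tracking them through, but it is a routine diagram chase once the coset representatives are fixed, and invertibility of all the graded pieces keeps every comparison map an isomorphism.
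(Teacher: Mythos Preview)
Your overall strategy---restriction in one direction, induction in the other, with invertibility of the graded pieces $A_h$, $B_k$ doing the work---is the paper's approach, and your backward functor and the check that ``its $g$-component recovers $M$'' are correct. But your forward functor is misstated, and as written the two functors are not mutually inverse.

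You send an $A$--$B$-bimodule $X$ supported on $HgK$ to $X$ itself with the restricted $(AB)_g$-action. This restriction is well-defined, but it is not an equivalence: as an $(AB)_g$-module, $X$ decomposes along the orbits of the $H^g$-action $(x)\cdot y = x\,y\,(g^{-1}x^{-1}g)$ on $HgK$, and there are many such orbits, so $X$ is much larger than a typical $(AB)_g$-module. Concretely, if you apply your backward functor to $M$ to get the induced bimodule $\widetilde{M}$ and then apply your forward functor, you get all of $\widetilde{M}$, not $M$; the very check you perform (``its $g$-component recovers $M$'') shows that the correct forward functor is $X\mapsto X_g$, the $g$-graded component. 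That component is indeed an $(AB)_g$-submodule, precisely because $x\cdot g\cdot(g^{-1}x^{-1}g)=g$, i.e.\ $g$ is a fixed point of the twisted action. With that correction your argument goes through and matches the paper's: forward is $X\mapsto X_g$, backward is $V\mapsto (A\boxtimes B)\otimes_{(AB)_g}V$, which is a cleaner way to package your $\bigoplus A_h\otimes M\otimes B_k$ (the explicit direct sum requires you to specify the quotient identifying $(A_h\otimes_{A_1}A_x)\otimes M\otimes B_k$ with $A_{hx}\otimes M\otimes (B_{g^{-1}x^{-1}g}\otimes_{B_1}B_k)$, which is exactly what the relative tensor product over $(AB)_g$ does for you).
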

\begin{proof}
If $X$ is an $A-B$-bimodule with support in $HgK$, then $X_{g}$
is an $(AB)_{g}$-module via restriction of the left $A$-action and
the right $B$-action. Conversely, if $V$ is an $(AB)_{g}$-module
inside $\C$, we can consider the induced module \[
(A\stimes B)\otimes_{(AB)_{g}}V.\]
 This is an $A-B$-bimodule, and one can see that the two constructions
gives equivalences in both directions. \end{proof}
\begin{rem*}
This is a generalization of Proposition 3.1 of \cite{O2}, where the
same situation is considered for the special case that $\C=Vec_{G}^{\omega}$
and $\D=1$. Also, notice that the decomposition to double cosets is the one which appears in Theorem \cite{mackey}
\end{rem*}
In conclusion, we have the following
\begin{prop}
The functor category $Fun_{\C}(\M_{1},\M_{2})$ is equivalent to the
category of $A-B$-bimodules. Each such simple bimodule is supported
on a double coset of the form $HgK$, and the subcategory of bimodules
with support in $HgK$ is equivalent to the category of $(AB)_{g}$-modules
inside $\C$.
\end{prop}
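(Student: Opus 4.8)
The plan is to assemble the proposition from the structural results of this section together with Ostrik's description of functor categories. First I would recall from the previous subsections that, writing $\N\cong Mod_{\D}-A_{1}$ and $\N'\cong Mod_{\D}-B_{1}$, the module categories are $\M_{1}\cong Mod_{\C}-A$ and $\M_{2}\cong Mod_{\C}-B$, where $A=\bigoplus_{h\in H}A_{h}$ and $B=\bigoplus_{k\in K}B_{k}$ are the graded algebras produced by the chosen solutions of the three obstructions. Ostrik's theorem (the one quoted in Section \ref{sec:Preliminaries}) gives $Fun_{\C}(Mod_{\C}-A,Mod_{\C}-B)\cong bimod_{\C}-A-B$, so $Fun_{\C}(\M_{1},\M_{2})$ is equivalent to the category of $A-B$-bimodules in $\C$. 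This proves the first assertion, and since a module category is semisimple, the simple objects of $bimod_{\C}-A-B$ are exactly the indecomposable $A-B$-bimodules.

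Next I would pin down the support of an indecomposable $A-B$-bimodule $X$. Because $A$ is supported on $H$ and $B$ on $K$, for every double coset $HgK$ the object $\bigoplus_{x\in HgK}X_{x}$ is a sub-bimodule of $X$, hence (by semisimplicity) a direct summand; indecomposability then forces $X$ to be supported inside a single double coset $HgK$. To see that the support is all of $HgK$ I would use that each $A_{h}$ is an invertible $A_{1}$-bimodule and each $B_{k}$ an invertible $B_{1}$-bimodule, so that tensoring on the left by $A_{h}$ over $A_{1}$, resp.\ on the right by $B_{k}$ over $B_{1}$, is an equivalence shifting the grading; consequently, if one homogeneous component $X_{g}$ is nonzero then so is $X_{hgk}$ for all $h\in H$, $k\in K$ --- this is exactly the double-coset bookkeeping underlying Theorem \ref{mackey}. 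Hence the support of $X$ is precisely $HgK$.

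Finally, fixing $g$, the full subcategory of $bimod_{\C}-A-B$ spanned by the bimodules supported on $HgK$ is a semisimple abelian subcategory, and by the Lemma just proved it is equivalent to the category of $(AB)_{g}$-modules inside $\C$, via $X\mapsto X_{g}$ with quasi-inverse $V\mapsto(A\stimes B)\otimes_{(AB)_{g}}V$. Summing over the double cosets then recovers the decomposition of $Fun_{\C}(\M_{1},\M_{2})$ in the statement (and, comparing with Proposition \ref{pro:ext-fun-cat}, exhibits an intrinsic model for each summand). The one point that is not a formal consequence of earlier results is the support computation in the second paragraph --- checking that invertibility of the $A_{h}$ and $B_{k}$ prevents an indecomposable bimodule from being supported on a proper nonempty subset of a double coset --- so that is where I would spend the real effort.
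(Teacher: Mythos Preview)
Your proposal is correct and follows essentially the same approach as the paper: the proposition there is stated as a summary of the preceding discussion, which invokes Ostrik's bimodule description of functor categories, the ``easy to see'' support argument via invertibility of the $A_{h}$ and $B_{k}$, and the Lemma just proved for the equivalence with $(AB)_{g}$-modules. Your write-up merely makes explicit the details the paper leaves to the reader in the support computation.
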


\section{A detailed example: classification of modules categories over the
Tambara Yamagami fusion categories and their dual categories\label{sec:An-example:-classification}}

As an example of our results, we shall now describe the module categories
over the Tambara Yamagami fusion categories $\C=\mathcal{TY}(A,\chi,\tau)$
and the corresponding dual categories. Let $A$ be a finite group.
Let $R_{A}$ be the fusion ring with basis $A\cup\{m\}$ whose multiplication
is given by the following formulas: \\
 \[
g\cdot h=gh,\forall g,h\in A\]
 \[
g\cdot m=m\cdot g=m\]
 \[
m\cdot m=\sum_{g\in A}g\]
 In \cite{TY} Tambara and Yamagami classified all fusion categories
with the above fusion ring. They showed that if there is a fusion
category $\mathcal{C}$ whose fusion ring is $R_{A}$ then $A$ must
be abelian. They also showed that for a given $A$ such fusion categories
can be parameterized (up to equivalence) by pairs $(\chi,\tau)$ where
$\chi:A\times A\rightarrow k^{*}$ is a nondegenerate symmetric bicharacter,
and $\tau$ is a square root (either positive or negative) of $\frac{1}{|A|}$.
We denote the corresponding fusion category by $\C:=\mathcal{TY}(A,\chi,\tau)$.

The category $\C$ is naturally graded by $\Z_{2}=\langle\sigma\rangle$.
The trivial component is $Vec_{A}$ (with trivial associativity constraints)
and the nontrivial component, which we shall denote by $\M$, has
one simple object $m$. In \cite{ENO2} the authors described how
the Tambara Yamagami fusion categories corresponds to an extension
data of $Vec_{A}$ by the group $\Z_{2}$. We shall explain now the
classification of module categories over $\ty$ given by our parameterization.

Since $A$ is an abelian group and the associativity constraints in
$Vec_{A}$ are trivial, module categories over $Vec_{A}$ are parameterized
by pairs $(H,\psi)$ where $H<A$ is a subgroup and $\psi\in H^{2}(H,k^{*})$.
We shall denote the corresponding module category by $\mhpsi$. As
explained in Section \ref{sec:zero}, we have a natural action of
$\Z_{2}=\langle\sigma\rangle$ on the set of equivalence classes of
module categories over $Vec_{A}$. We shall describe this action in
Subsection \ref{sub:sigmaaction}.

Recall that the second component in the parameterization of a module
category is a subgroup of the grading group. If this subgroup is the
trivial subgroup, then we will just have a category which is induced
from $Vec_{A}$. It is easy to see that such categories decompose
over $Vec_{A}$ to the direct sum of two indecomposable module categories.
In that case, all the obstructions and solutions will be trivial.
If this subgroup is $\Z_{2}$ itself, we will have a $\C$-module
category structure on $\mhpsi$ for some $H$ and some $\psi$. In
that case, it must hold that $\sigma(H,\psi)=(H,\psi)$, and we may
have some nontrivial obstructions and solutions.

The rest of this section will be devoted to analyze the action of
$\sigma$ and the obstructions and their solutions (for the case in
which we have obstructions). We will also describe the relations of
our result with the result of Tamabra on fiber functors on Tamabara
Yamagami categories, and also describe the dual categories.

We would like now to describe the main result of this section. We
will split our main classification result into two proposition, according
to the subgroup of $\Z_{2}$ which appears in the parameterization.
Our first proposition follows in a straight forward way from the discussion
in the previous sections
\begin{prop}
Module categories over $\C$ whose parameterization begins with $(\mhpsi,1,\ldots)$
are the induced categories $Ind_{Vec_{A}}^{\C}(\mhpsi)$. We will
have an equivalence of $\C$ module categories $Ind_{Vec_{A}}^{\C}(\mhpsi)\cong Ind_{Vec_{A}}^{\C}(\M(H',\psi'))$
if and only if $(H,\psi)=(H',\psi')$ or if $(H,\psi)=\sigma(H'\psi')$.
\end{prop}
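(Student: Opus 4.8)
The plan is to identify the tuples of the form $(\mhpsi,1,\ldots)$ among all parameterizing tuples and to show that the only freedom in such a tuple comes from the $\sigma$-action on $Vec_A$-module categories. First I would observe that when the subgroup appearing in the tuple is the trivial subgroup $H=1$, all remaining data $(\Phi,v,\beta)$ is automatically trivial, since $\Phi$ is a homomorphism from $1$, $v$ lies in a torsor over $H^1(1,Z(\Gamma))=0$, and $\beta$ in a torsor over $H^2(1,k^*)=0$. Hence the tuple reduces to the pair $(\mhpsi,1)$, and by the construction of $\M(\N,H,\Phi,v,\beta)$ (see Section~\ref{sec:zero} and the proof of Theorem~\ref{pro:structure}) the associated $\C$-module category is exactly $Ind_{Vec_A}^{\C}(\mhpsi)$. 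This gives the first assertion.

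Next I would settle the equivalence criterion by invoking Theorem~\ref{thm:2}: two tuples $(\mhpsi,1)$ and $(\M(H',\psi'),1)$ determine equivalent $\C$-module categories if and only if the second is obtained from the first by some $g\in\Z_2$ together with an equivalence of $Vec_A$-module categories $F:\C_g\stimes_{Vec_A}\mhpsi\equiv\M(H',\psi')$. Since $\Z_2=\langle\sigma\rangle$ has only two elements, there are exactly two cases. For $g=1$ we have $\C_1=Vec_A$, so $F$ is simply an equivalence $\mhpsi\equiv\M(H',\psi')$ of $Vec_A$-module categories, which by the classification of module categories over $Vec_A$ (recalled just above the proposition) holds precisely when $(H,\psi)=(H',\psi')$. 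For $g=\sigma$ we have $\C_\sigma=\M$, and $\M\stimes_{Vec_A}\mhpsi\equiv\M(H',\psi')$ means exactly that $\sigma\cdot(H,\psi)=(H',\psi')$ in the notation for the $\sigma$-action introduced in Section~\ref{sec:zero}; this is the condition $(H,\psi)=\sigma(H',\psi')$ (the action being an involution). Combining the two cases yields the stated "if and only if."

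I expect the only genuinely substantive point to be the verification that, in the trivial-subgroup case, the induced $\C$-module category genuinely depends only on the $Vec_A$-equivalence class of $\mhpsi$ and nothing else — i.e., that there is no hidden extra data — and that the reconstruction of the tuple from $t_F$ in Theorem~\ref{thm:2} does not produce, for $H=1$, anything beyond the bare pair. This is immediate from the vanishing of $H^1(1,-)$ and $H^2(1,k^*)$ noted above, but it is worth stating explicitly so that the reduction of Theorem~\ref{thm:2} to the $Vec_A$-level statement is clean. Once that is in place the proposition follows "in a straightforward way from the discussion in the previous sections," as claimed.
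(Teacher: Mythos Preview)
Your proposal is correct and follows exactly the route the paper intends: the paper does not give a separate proof of this proposition but states that it ``follows in a straight forward way from the discussion in the previous sections,'' having already noted just before the statement that for the trivial subgroup all obstructions and solutions are trivial. You have simply made explicit the two ingredients the paper is pointing to---the vanishing of the extra data when $H=1$ (so the tuple collapses to $(\mhpsi,1)$ and the category is the induction), and the specialization of Theorem~\ref{thm:2} to $G=\Z_2$, $H=H'=1$, where the only choices of $g$ are $1$ and $\sigma$---and this is precisely the intended argument.
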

In order to describe the other case, we need some notations. Suppose
that $H<A$ is a subgroup which contains $H^{\nizav}$ (the subgroup perpendicular to $H$ with respect to $\chi$).
If we denote by $\bar{H}:=H/H^{\nizav}$, then $\chi$ induces a non-degenerate
symmetric bicharacter $\bar{\chi}:\bar{H}\times\bar{H}\rightarrow k^{*}$.
If $\psi\in H^{2}(H,k^{*})$ satisfies $Rad(\psi)=H^{\nizav}$ (the
definition of $rad(\psi)$ is given in Subsection \ref{sub:sigmaaction}),
then $\psi$ is the inflation of a nondegenerate two cocycle $\bar{\psi}$
on $\bar{H}$. We will usually not distinguish between $\psi$ and
$\bar{\psi}$.
\begin{prop}
\label{maintambara} For $\mhpsi$ to have a structure of a $\C$
module category, it is necessary that $\sigma(H,\psi)=(H,\psi)$.
This implies that $Rad(\psi)=H^{\nizav}<H$. If this holds, then $\C$-module
categories structures on $\mhpsi$ are parameterized by pairs $(s,\nu)$
where $s:H/H^{\nizav}\rightarrow H/H^{\nizav}$ is an involutive automorphism,
and $\nu:H/H^{\nizav}\rightarrow k^{*}$ is a function which satisfy
for every $a,b\in H/H^{\nizav}$ \[
\bar{\chi}(a,b)=\psi(s(a),b)/\psi(b,s(a))\]
 \[
\del\nu(a,b)=\psi(a,b)/\psi(s(b),s(a))\]
 \[
\nu(a)\nu(s(a))=1\]
 \[
sign(\sum_{s(a)=a}\nu(a))=sign(\tau)\]
 Two such pairs $(s,\nu)$ and $(s',\nu')$ will give equivalent module
category structures on $\mhpsi$ if and only if $s=s'$ and there
exist a character $\phi:H/H^{\nizav}\rightarrow k^{*}$ such that
$\nu(h)/\nu'(h)=\eta(h)/\eta(s(h))$.
\end{prop}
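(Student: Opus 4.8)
The plan is to apply the general machinery of Theorems \ref{thm:main} and \ref{thm:2} to the concrete case $\C=\ty$, $\D=Vec_A$, $G=\Z_2$, with $\N=\mhpsi$ and $H=\Z_2$ the whole grading group. The first task is the ``zeroth obstruction'': I would compute the $\sigma$-action on module categories over $Vec_A$ explicitly (this is deferred to Subsection \ref{sub:sigmaaction}) and observe that $\sigma(H,\psi)=(H,\psi)$ forces $H^{\nizav}<H$ and $Rad(\psi)=H^{\nizav}$; this is exactly the statement that $\mhpsi$ is $\sigma$-invariant, a prerequisite for any $\C$-module structure. So the content of the proposition is to parameterize the remaining data $(\Phi,v,\beta)$ of Theorem \ref{thm:main} for $H=\Z_2$, and then to translate the equivalence criterion of Theorem \ref{thm:2} into the stated condition on pairs $(s,\nu)$.

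The second step is to identify $\Gamma=Aut_{Vec_A}(\mhpsi)$ and the first two obstructions. For $\mhpsi$ over $Vec_A$ one knows that $\Gamma$ is abelian (it is a group of invertible bimodules over an algebra in $Vec_A$, which can be described via characters of $H$ modulo the radial identification), so $Z(\Gamma)=\Gamma$ and $Out(\Gamma)=Aut(\Gamma)$; then a homomorphism $\Phi:\Z_2\to Aut(\Gamma)$ is just an involutive automorphism, and since the outer action $\rho$ is forced by $(c,\N,H)$, the choice of $\Phi$ is the choice of the involution $s:H/H^{\nizav}\to H/H^{\nizav}$ — here I would check that the induced outer action is realized by an automorphism of $\Gamma$ precisely when the bicharacter compatibility $\bar\chi(a,b)=\psi(s(a),b)/\psi(b,s(a))$ holds, which pins down the admissible $s$. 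Because $H\cong\Z_2$ and $\Gamma$ is a finite abelian group of exponent dividing the order of $H/H^{\nizav}$ on which $\Z_2$ acts, $H^2(\Z_2,\Gamma)$ is the usual ``norm/augmentation'' quotient, and the obstruction class $O_2$ vanishes or not according to an explicit sign/quadratic computation; a solution $v\in C^1(\Z_2,\Gamma)$ then corresponds to the function $\nu:H/H^{\nizav}\to k^*$ satisfying the normalization $\nu(a)\nu(s(a))=1$ together with the coboundary equation $\del\nu(a,b)=\psi(a,b)/\psi(s(b),s(a))$, which is precisely $\del v=O_2$ written out using the identification of $Z(\Gamma)$ with such functions.

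The third step handles the last obstruction $\beta\in H^2(\Z_2,k^*)$. But $H^2(\Z_2,k^*)=0$, so the third obstruction $O_3\in H^3(\Z_2,k^*)$ must be checked to vanish — this is where the sign of $\tau$ enters: following the intrinsic description of Section \ref{sec:An-intrinsic-description}, the associator of the algebra $\tilde A=A_1\oplus A_\sigma$ involves the scalar $\tau$ from the Tambara--Yamagami data, and compatibility of $\eta(\sigma,\sigma)$ with the pentagon forces $sign(\sum_{s(a)=a}\nu(a))=sign(\tau)$; I expect this dimension-count / Gauss-sum identity to be the main obstacle, since it is the one place where the specific structure constants of $\ty$ (not just of $Vec_A$) are used, and it requires carefully tracking how the associativity isomorphism $A_\sigma\otimes_{A_1}A_\sigma\to A_1$ interacts with $\tau$. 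Finally, for the equivalence statement, I would invoke Theorem \ref{thm:2} with $g=1$ (since $H=G$ and the action is trivial, the relevant $F$ is an autoequivalence $t_F\in\Gamma$): the discussion following Theorem \ref{thm:2} shows $\Phi$ can only be changed by conjugation, which is trivial here because $\Gamma$ is abelian, so $s=s'$ is forced; and changing $t_F$ changes $v$ by the cocycle $r(a)=t_F\Phi(a)(t_F)^{-1}$, which in the character description is exactly $\eta(h)/\eta(s(h))$ for the character $\eta=\phi$ corresponding to $t_F$ — giving $\nu/\nu'=\eta(h)/\eta(s(h))$ as claimed, with no further freedom from $\beta$ since $H^2(\Z_2,k^*)$ is trivial.
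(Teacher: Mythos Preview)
Your overall strategy---apply Theorems \ref{thm:main} and \ref{thm:2} through the intrinsic bimodule description of Section \ref{sec:An-intrinsic-description}---is exactly what the paper does. You are also right that $\Gamma=E$ is abelian (so the first obstruction is vacuous), that $H^2(\Z_2,k^*)=0$ (so $\beta$ carries no data), and that the sign condition is the third obstruction $O_3\in H^3(\Z_2,k^*)\cong\{\pm1\}$, computed via Tambara's Gauss-sum argument.

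However, your bookkeeping of \emph{which} equation encodes \emph{which} obstruction is off, and this is not merely cosmetic. First, the involution $s$ and the relation $\bar\chi(a,b)=\xi_\psi(s(a),b)$ are not the content of choosing $\Phi$; they come out of the \emph{zeroth} obstruction, i.e.\ from computing $\sigma\cdot\mhpsi$ and requiring it to equal $\mhpsi$ (Lemma \ref{lem:invariant-ty-modules}). Since $\bar\chi$ and $\xi_{\bar\psi}$ are both nondegenerate on $H/H^{\perp}$, the map $s$ is \emph{uniquely determined} by $\chi$ and $\psi$; it is not a free parameter, and $\Phi$ (which lives in $Aut(E)$, a different group) is likewise forced.

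Second, and more seriously, the two equations for $\nu$ are not ``$\partial v=O_2$ written out via an identification of $Z(\Gamma)$ with such functions''. The cochain $v$ lives in $C^1(\Z_2,E)$, i.e.\ it is a single element of $E$; the function $\nu:H/H^{\perp}\to k^*$ is a different kind of object, and there is no identification of $E$ with such functions. In the paper's argument $\nu$ arises as the structure constant of an explicit invertible $k^\psi H$-bimodule $X=X(1,\nu)$ supported on $\M$, via $U_h\cdot v=\nu(h)\,v\cdot U_{t_h}$. The equation $\partial\nu(a,b)=\psi(a,b)/\psi(s(b),s(a))$ is the condition that this formula define a left module structure at all; it is prior to any obstruction. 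The equation $\nu(a)\nu(s(a))=1$ is the condition $X\otimes_{k^\psi H}X\cong k^\psi H$, i.e.\ that $X$ furnishes a splitting of $1\to E\to\tilde E\to\Z_2\to 1$; \emph{this} is what encodes the vanishing of the second obstruction and the choice of solution. You have these two roles reversed. Similarly, the equivalence criterion $\nu/\nu'=\eta/\eta\circ s$ comes from conjugating $X$ by a bimodule $U_{a_i,\lambda}\in E$ and tracking how this changes $\nu$; it is not obtained by plugging a character ``$\eta=t_F$'' into the abstract formula $r(a)=t_F\Phi(a)(t_F)^{-1}$ without first establishing how elements of $E$ act on the parameter $\nu$.
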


\subsection{The action of $\sigma$ on indecomposable module categories and representations
of twisted abelian group algebras\label{sub:sigmaaction}}

Recall that the $Vec_{A}$ module category $\N=\mhpsi$ is the category
of right modules over the algebra $k^{P\psi}H$ inside $Vec_{A}$.
We would like to understand the $Vec_{A}$ module category $\M\stimes_{Vec_{A}}\N$.

As explained in Section \ref{sec:An-intrinsic-description}, this
module category can be described as the category of right $k^{\psi}H$-modules
with support in the category $\M$, the nontrivial grading component
of $\C$. A $k^{\psi}H$-module with support in $\M$ is of the form
$m\otimes V$ where $V$ is a vector space which is a $k^{\psi}H$-module
in the usual sense. So the category $\M\stimes_{Vec_{A}}\N$ is equivalent,
at least as an abelian category, to the category of $k^{\psi}H$-modules
in $Vec$.

We would like to describe $\M\stimes_{Vec_{A}}\N$ as a module category
of the form $\M(H',\psi')$ for some $H'<A$ and some two cocycle
$\psi'\in H^{2}(H',k^{*})$. In order to do so, we begin by describing
the simple $k^{\psi}H$ modules in $Vec$ (they will correspond to
the simple objects in $M\stimes_{Vec_{A}}\N$).

Let $k^{\psi}H=\oplus_{h\in H}U_{h}$. The multiplication in $k^{\psi}H$
is given by the rule $U_{h}U_{k}=\psi(h,k)U_{hk}$. Denote by $R=Rad(\psi)$
the subgroup of all $h\in H$ such that $U_{h}$ is central in $k^{\psi}H$.

As the field $k$ is algebraically closed of characteristic zero and
$H$ is abelian, the data that stored in the cocycle $\psi$ is simply
the way in which the $U_{h}$'s commute. More precisely- let us define
the following alternating form on $H$: \[
\xi_{\psi}(a,b)=\psi(a,b)/\psi(b,a).\]

It turns out (see \cite{T}) that the assignment $\psi\mapsto\xi_{\psi}$
depends only on the cohomology class of $\psi$, and that it gives
a bijection between $H^{2}(H,k^{*})$ and the set of all alternating
forms on $H$. The elements of $R$ can be described as those $h\in H$
such that $\xi_{\psi}(h,-)=1$. As can easily be seen, $\xi_{\psi}$
is the inflation of an alternating form on $H/R$. It follows easily
that $\psi$ is the inflation of a two cocycle $\bar{\psi}$ on $H/R$.

It can also be seen that $\xi_{\bar{\psi}}$ is nondegenerate on $H/R$
and that $k^{\bar{\psi}}H/R\cong M_{n}(k)$ where $n=\sqrt{|H/R|}$.
It follows that $k^{\bar{\psi}}H/R$ has only one simple module (up
to isomorphism) which we shall denote by $V_{1}$ (i.e, $\bar{\psi}$
is non degenerate on $H/R$). By inflation, $V_{1}$ is also a $k^{\psi}H$-module.
Let $\zeta$ be a character of $H$, and let $k^{\zeta}$ be the corresponding
one dimensional representation of $H$. Then $k^{\zeta}\otimes V_{1}$
is also a simple module of $k^{\psi}H$, where $H$ acts diagonally.
It turns out that these are all the simple modules of $k^{\psi}H$,
and that $V_{\zeta_{1}}\cong V_{\zeta_{2}}$ if and only if the restrictions
of $\zeta_{1}$ and $\zeta_{2}$ to $R$ coincide.

The simple modules of $k^{\psi}H$ are thus parameterized by the characters
of $R$ (we use here the fact that the restriction from the character
group of $H$ to that of $R$ is onto). For every character $\zeta$
of $R$, we denote by $V_{\zeta}$ the unique simple module of $k^{\psi}H$
upon which $R$ acts via the character $\zeta$. So the simple $k^{\psi}H$-modules
with support in $\M$ are of the form $m\otimes V_{\zeta}$.

In order to understand the structure of $\M\stimes_{Vec_{A}}\N$ as
a $Vec_{A}$ module category, let us describe $V_{a}\otimes(m\otimes V_{\zeta})$
for $a\in A$. It can easily be seen that this is also a simple module,
so we just need to understand via which character $R$ acts on it.
Using the associativity constraints in $\ty$, we see that for $v\in V_{\zeta}$
and $r\in R$ we have \[
(V_{a}\otimes m\otimes v)\cdot U_{r}=\chi(a,r)V_{a}\otimes(m\otimes v\cdot U_{r})=\chi(a,r)\zeta(r)V_{a}\otimes m\otimes v.\]
 This means that $V_{a}\otimes(m\otimes V_{\zeta})=m\otimes V_{\zeta\chi(a,-)}$.
So the stabilizer of $V_{\zeta}$ is the subgroup of all $a\in A$
such that $\chi(a,r)=1$ for all $r\in R$, i.e., it is $R^{\nizav}$.
It follows
that $\M\stimes_{Vec_{A}}\N$ is equivalent to a category of the form
$\M(R^{\nizav},\tilde{\psi})$. Where $\tilde{\psi}$ is some two
cocycle.

Let us figure out what is $\tilde{\psi}$. If $a\in R^{\nizav}$,
then the restriction of $\chi(a,-)$ to $H$ is a character which
vanishes on $R$. Therefore, there is a unique (up to multiplication
by an element of $R$) element $t_{a}\in H$ such that $\xi_{\psi}(t_{a},-)=\chi(a,-)$.
It follows that there is an isomorphism $r_{a}:V_{a}\otimes(m\otimes V_{1})\rightarrow m\otimes V_{1}$
which is given by the formula $V_{a}\otimes(m\otimes v)\mapsto m\otimes(v\cdot U_{t_{a}})$.
Now for every $a,b\in R^{\nizav}$, $\tilde{\psi}(a,b)$ should make
the following diagram commute:

\[
\xymatrix{(V_{a}\otimes V_{b})\otimes(m\otimes V_{1})\ar[d]^{r_{ab}}\ar[r] & V_{a}\otimes(V_{b}\otimes(m\otimes V_{1}))\ar[r]^{\,\,\,\,\,\,\,\,\,\,\, r_{b}} & V_{a}\otimes(m\otimes V_{1})\ar[d]^{r_{a}}\\
m\otimes V_{1}\ar[rr]^{\tilde{\psi}(a,b)} &  & m\otimes V_{1}}
\]


An easy calculation shows that this means that $\tilde{\psi}(a,b)=\psi(t_{b},t_{a})$.
We thus have the following result:
\begin{lem}
We have $\sigma\cdot\mhpsi\equiv\M(R^{\nizav},\tilde{\psi})$ where
$R$ is the radical of $\psi$ and $\tilde{\psi}$ is described above.
\end{lem}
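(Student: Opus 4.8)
The plan is to compute $\sigma\cdot\mhpsi$ directly by working inside $\C=\ty$, using the intrinsic description of the $\sigma$-action from Section \ref{sec:An-intrinsic-description}: we must identify the $Vec_A$-module category $\M\stimes_{Vec_A}\N$, where $\N=Mod_{Vec_A}-k^\psi H$, with a category of the form $\M(H',\psi')$. By the general theory, $\M\stimes_{Vec_A}\N$ is the category of right $k^\psi H$-modules supported in $\M$, and each such module is $m\otimes V$ for $V$ an ordinary $k^\psi H$-module. So first I would recall the representation theory of the twisted group algebra $k^\psi H$: it is controlled by the alternating form $\xi_\psi(a,b)=\psi(a,b)/\psi(b,a)$, its radical $R=Rad(\psi)$ consists of those $h$ with $\xi_\psi(h,-)=1$, and the simple modules are parameterized by characters $\zeta$ of $R$, with $V_\zeta\cong k^\zeta\otimes V_1$ for $V_1$ the unique simple of $k^{\bar\psi}H/R$. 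This gives the simple objects $m\otimes V_\zeta$ of $\M\stimes_{Vec_A}\N$.

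Next I would compute the $Vec_A$-action: for $a\in A$, I tensor $V_a\otimes(m\otimes V_\zeta)$ and use the associativity (mixed pentagon) constraints of $\ty$ to see how $R$ acts on the result. The key computation is $(V_a\otimes m\otimes v)\cdot U_r=\chi(a,r)\zeta(r)(V_a\otimes m\otimes v)$, which shows $V_a\otimes(m\otimes V_\zeta)\cong m\otimes V_{\zeta\chi(a,-)}$. Hence the stabilizer of the simple object $m\otimes V_1$ (which I take as the basepoint) is exactly $\{a\in A:\chi(a,r)=1\ \forall r\in R\}=R^{\nizav}$. This immediately forces $\M\stimes_{Vec_A}\N\cong\M(R^{\nizav},\tilde\psi)$ for some $\tilde\psi\in H^2(R^{\nizav},k^*)$, since a $Vec_A$-module category with a simple object whose stabilizer is a subgroup $H'$ is of the form $\M(H',-)$.

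It then remains to pin down $\tilde\psi$. For $a\in R^{\nizav}$ the character $\chi(a,-)|_H$ vanishes on $R$, so by nondegeneracy of $\xi_\psi$ on $H/R$ there is an element $t_a\in H$ (unique up to $R$) with $\xi_\psi(t_a,-)=\chi(a,-)$; this yields an explicit isomorphism $r_a:V_a\otimes(m\otimes V_1)\to m\otimes V_1$, namely $V_a\otimes m\otimes v\mapsto m\otimes(v\cdot U_{t_a})$. Writing out the hexagon-type diagram defining $\tilde\psi(a,b)$ in terms of $r_a,r_b,r_{ab}$ and the associativity constraint of $\ty$ reduces, after cancellation, to an identity in $k^\psi H$, giving $\tilde\psi(a,b)=\psi(t_b,t_a)$ (the order swap coming from how the $U$'s commute past each other). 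Assembling these three steps gives $\sigma\cdot\mhpsi\cong\M(R^{\nizav},\tilde\psi)$.

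The main obstacle I expect is bookkeeping with the associativity constraints of $\ty$: the nontrivial component $\M$ interacts with $Vec_A$ via the bicharacter $\chi$ and with itself via $\tau$, so one must be careful that the relevant associativity morphisms used in the action $V_a\otimes(m\otimes v)$ are the ones governed by $\chi$ (not the $\tau$-part), and that the ambiguity in the choice of $t_a$ modulo $R$ genuinely does not affect $\tilde\psi$ up to coboundary. A secondary subtlety is checking that $\tilde\psi(a,b)=\psi(t_b,t_a)$ is actually a $2$-cocycle on $R^{\nizav}$ and is well-defined independent of the choices; this follows from the cocycle identity for $\psi$ together with the fact that $t_{ab}$ and $t_at_b$ differ by an element of $R$, but it requires care.
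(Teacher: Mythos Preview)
Your proposal is correct and follows essentially the same approach as the paper: the paper likewise identifies $\M\stimes_{Vec_A}\N$ with $k^\psi H$-modules supported on $m$, parameterizes simples by characters of $R=Rad(\psi)$, computes the $Vec_A$-action via the associativity constraints of $\ty$ to find stabilizer $R^{\nizav}$, and then reads off $\tilde\psi(a,b)=\psi(t_b,t_a)$ from the commutativity of the same diagram involving the isomorphisms $r_a$. The subtleties you flag (the role of $\chi$ in the associator and the ambiguity of $t_a$ modulo $R$) are exactly the ones implicit in the paper's computation.
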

Suppose now that $\sigma\cdot\mhpsi\equiv\mhpsi$. This implies that
$Rad(\psi)=H^{\nizav}$. The bicharacter $\chi$ defines by restriction
a pairing on $H\times H$, and by dividing out by $H^{\nizav}$, we
get a nondegenerate symmetric bicharacter $\bar{\chi}:H/H^{\nizav}\times H/H^{\nizav}\rightarrow k^{*}$.
It is easy to see that the assignment $h\mapsto t_{h}$ that was described
above induces an automorphism $s$ of $H/H^{\nizav}$ which satisfies
\begin{equation}
\bar{\chi}(a,b)=\xi_{\bar{\psi}}(s(a),b)\label{sandpsi}.\end{equation}
The fact that $\tilde{\psi}=\psi$ means that $\xi_{\bar{\psi}}(s(b),s(a))=\xi_{\bar{\psi}}(a,b)$.
Equivalently, this means that $\bar{\chi}(a,b)=\xi_{\bar{\psi}}(s(a),b)=\xi_{\bar{\psi}}(s(b),s^{2}(a))=\bar{\chi}(b,s^{2}(a))$
and since $\bar{\chi}$ is nondegenerate, this is equivalent to the
fact that $s^{2}=Id$.

In summary:
\begin{lem}
\label{lem:invariant-ty-modules}We have $\sigma\cdot\mhpsi\equiv\mhpsi$
if and only if the following two conditions hold:

1.$H^{\nizav}<H$.

2.There is an automorphism $s$ of order 2 of $H/H^{\nizav}$ such
that $(a,b)\mapsto\bar{\chi}(s(a),b)$ is an alternating form, and
the inflation of this alternating form to $H$ is $\xi_{\psi}$.
\end{lem}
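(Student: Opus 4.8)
The plan is to simply chase through the computation of $\sigma\cdot\mhpsi$ that was carried out above and read off exactly when the output module category equals the input. Recall that the previous lemma tells us $\sigma\cdot\mhpsi\equiv\M(R^{\nizav},\tilde\psi)$, where $R=Rad(\psi)$ and $\tilde\psi(a,b)=\psi(t_b,t_a)$ on $R^{\nizav}$. So $\sigma\cdot\mhpsi\equiv\mhpsi$ is the statement that $(R^{\nizav},\tilde\psi)=(H,\psi)$ as a pair of (subgroup, cohomology class). First I would handle the group-level condition: $R^{\nizav}=H$ is equivalent (by nondegeneracy of $\chi$ and the standard fact that perpendicularity is an involution on subgroups of $A$) to $R=H^{\nizav}$, i.e.\ $Rad(\psi)=H^{\nizav}$. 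Since $Rad(\psi)\le H$ always, this in particular forces $H^{\nizav}\le H$, which is condition~(1). Conversely if $H^{\nizav}\le H$ then $R^{\nizav}=H$ is the same as $R=H^{\nizav}$.

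Next I would translate the cocycle condition $\tilde\psi=\psi$ into the statement about $s$. Once $Rad(\psi)=H^{\nizav}$, the cocycle $\psi$ is the inflation of a nondegenerate $\bar\psi$ on $\bar H=H/H^{\nizav}$, and $\chi$ descends to a nondegenerate symmetric bicharacter $\bar\chi$ on $\bar H$. As shown above, the assignment $h\mapsto t_h$ (characterised by $\xi_\psi(t_h,-)=\chi(h,-)$) is well defined modulo $H^{\nizav}$ and induces an automorphism $s$ of $\bar H$ satisfying~\eqref{sandpsi}, namely $\bar\chi(a,b)=\xi_{\bar\psi}(s(a),b)$. I would then note two things: (i) this identity says precisely that the alternating form $(a,b)\mapsto\bar\chi(s(a),b)$ coincides with $\xi_{\bar\psi}$, whose inflation to $H$ is $\xi_\psi$; and (ii) by the computation $\tilde\psi(a,b)=\psi(t_b,t_a)$ one gets $\xi_{\tilde\psi}(a,b)=\xi_\psi(t_b,t_a)^{-1}=\xi_{\bar\psi}(s(b),s(a))^{-1}$, wait---more carefully, $\xi_{\tilde\psi}(a,b)=\tilde\psi(a,b)/\tilde\psi(b,a)=\psi(t_b,t_a)/\psi(t_a,t_b)=\xi_\psi(t_b,t_a)=\xi_{\bar\psi}(s(b),s(a))$. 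Since a cohomology class of a two cocycle on an abelian group is determined by its alternating form (the fact from \cite{T} already invoked), $\tilde\psi=\psi$ in $H^2$ is equivalent to $\xi_{\bar\psi}(s(b),s(a))=\xi_{\bar\psi}(a,b)$ for all $a,b$.

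Finally I would show this last equality is equivalent to $s^2=Id$, which is the remaining content of condition~(2) (the rest of~(2) being point~(i) above). Plug~\eqref{sandpsi} into $\xi_{\bar\psi}(s(b),s(a))=\bar\chi(b,s^2(a))$ while $\xi_{\bar\psi}(a,b)=\bar\chi(b,s(a))$ after using symmetry of $\bar\chi$ and~\eqref{sandpsi} applied appropriately; hence the condition reads $\bar\chi(b,s^2(a))=\bar\chi(b,s(a))$ for all $a,b$, and nondegeneracy of $\bar\chi$ gives $s^2(a)=s(a)$, i.e.\ $s^2=Id$ (since $s$ is invertible). Assembling: $\sigma\cdot\mhpsi\equiv\mhpsi$ iff $Rad(\psi)=H^{\nizav}$ and $s^2=Id$, which is exactly conditions~(1) and~(2). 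The main obstacle is purely bookkeeping: keeping the order of arguments in $\xi_\psi$, $\bar\chi$ and $\tilde\psi$ straight through the chain of identifications, and being careful that $s$ is a priori only well defined up to the ambiguity $t_h\mapsto t_h r$ with $r\in R=H^{\nizav}$, which is precisely why everything must be pushed down to $\bar H$ before any of these equalities make sense; once that is set up cleanly, each step is a one-line consequence of nondegeneracy plus the $\psi\leftrightarrow\xi_\psi$ dictionary.
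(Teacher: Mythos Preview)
Your approach is exactly the one the paper takes: use the previous lemma to write $\sigma\cdot\mhpsi\equiv\M(R^{\nizav},\tilde\psi)$, match the subgroups to force $Rad(\psi)=H^{\nizav}$ (giving condition~(1)), pass to $\bar H=H/H^{\nizav}$ where $h\mapsto t_h$ becomes the automorphism $s$ satisfying \eqref{sandpsi}, and then reduce $\tilde\psi=\psi$ in $H^{2}$ to the identity $\xi_{\bar\psi}(s(b),s(a))=\xi_{\bar\psi}(a,b)$, which must be shown equivalent to $s^{2}=Id$.

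The final chain of equalities, however, is miscomputed and as written yields the wrong conclusion. From \eqref{sandpsi} with $a\mapsto b$, $b\mapsto s(a)$ one gets $\xi_{\bar\psi}(s(b),s(a))=\bar\chi(b,s(a))$, not $\bar\chi(b,s^{2}(a))$; and setting $s(x)=a$ in \eqref{sandpsi} gives $\xi_{\bar\psi}(a,b)=\bar\chi(s^{-1}(a),b)$, not $\bar\chi(b,s(a))$. Taken literally, your two equalities give $\bar\chi(b,s^{2}(a))=\bar\chi(b,s(a))$, hence $s^{2}(a)=s(a)$, i.e.\ $s=Id$ --- which is not the statement. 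The fix is immediate once the substitutions are done correctly: comparing $\bar\chi(b,s(a))$ with $\bar\chi(s^{-1}(a),b)=\bar\chi(b,s^{-1}(a))$ and invoking nondegeneracy of $\bar\chi$ yields $s(a)=s^{-1}(a)$, i.e.\ $s^{2}=Id$. (Equivalently, the paper runs the chain $\bar\chi(a,b)=\xi_{\bar\psi}(s(a),b)=\xi_{\bar\psi}(s(b),s^{2}(a))=\bar\chi(b,s^{2}(a))$, the middle equality using the hypothesis $\xi_{\bar\psi}(s(y),s(x))=\xi_{\bar\psi}(x,y)$ with $x=s(a)$, $y=b$, and then symmetry plus nondegeneracy of $\bar\chi$ gives $s^{2}(a)=a$.) This is precisely the bookkeeping hazard you flagged in your closing sentence.
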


\subsection{The vanishing of the first obstruction and invertible bimodules with
support in $Vec_{A}$\label{sub:abimods}}

Assume now that we have a module category $\mhpsi$ such that $\sigma\cdot\mhpsi\equiv\mhpsi$.
We would like to describe all module categories whose classification
data begins with $(\mhpsi,\Z_{2},\ldots)$. In other words- we would
like to describe all possible ways (if any) to furnish a structure
of a $\C$ module category on $\mhpsi$.

So let $s$ be an automorphism as in Lemma \ref{lem:invariant-ty-modules}.
In order to explain the first obstruction for furnishing a $\ty$-module
category structure on $\mhpsi$, we need to consider the group of
invertible $k^{\psi}H$-bimodules in $\ty$. As we have seen in Section
\ref{sec:An-intrinsic-description}, such an invertible bimodule with
support in $Vec_{A}$ ($\M$) corresponds to a functor equivalence
$F:\N\rightarrow\N$ ($F:\M\stimes_{Vec_{A}}\N\rightarrow\N$). The
functor is given by tensoring with the invertible bimodule.

Let us first classify invertible $k^{\psi}H$-bimodules with support
in $Vec_{A}$. Their description was given in Ostrik's paper \cite{O2}.
We recall it briefly.

If $a\in A$ and $\lambda$ is a character on $H$, we define the
bimodule $M_{a,\lambda}$ to be \[
\oplus_{h\in H}V_{ah},\]
 where the action of $k^{\psi}H$ is given by \[
U_{h}\cdot V_{ah'}\cdot U_{h''}=\psi(h,h')\lambda(h)\psi(hh',h'')V_{ahh'h''}.\]

Choose now coset representatives $a_{1},\ldots,a_{r}$ of $H$ in
$A$. Proposition 3.1 of \cite{O2} tells us that the modules $M_{a_{i},\lambda}$
where $i=1,\ldots r$ and $\lambda\in\hat{H}$ are all the invertible
$k^{\psi}H$ bimodules, and each invertible bimodule with support
in $Vec_{A}$ appears in this list exactly once.

By a more careful analysis we can get to the following description
of the group of invertible bimodules: we have a homomorphism $\xi:H\rightarrow\hat{H}$
given by $h\mapsto\xi_{\psi}(h,-)$. Then the group $E$ of all invertible
bimodules with support in $Vec_{A}$ can be described as the pushout
which appears in the following diagram: (see Theorem 5.2 of \cite{GN} for a more general result)

\[
\xymatrix{H\ar[d]^{\xi}\ar[r] & A\ar[d]\\
\hat{H}\ar[r] & E}
\]

The group $E$ is thus also isomorphic to the group $Aut_{Vec_{A}}(\mhpsi)$.
Notice that the group $E$ is \textit{abelian}. A solution to the
first obstruction is a lifting of the natural map (see Section \ref{sec:The-first-two})
$\Z_{2}\rightarrow Out(E)$ to a map $\Z_{2}\rightarrow Aut(E)$ But
since $E$ is abelian, $Out(E)=Aut(E)$, so this problem is trivial,
and it has only one solution. So we have a proper (and not just outer)
action of $\Z_{2}$ on $E$.

\subsection{The group of all invertible bimodules and the second obstruction}

Since $\mhpsi$ is $\sigma$-invariant, we see by Section \ref{sec:An-intrinsic-description}
that the group $\tilde{E}$ of (isomorphism classes of) invertible
$k^{\psi}H$ bimodules in $\C$ is given as an extension \[
\Sigma:1\rightarrow E\rightarrow\tilde{E}\rightarrow\Z_{2}\rightarrow1.\]
 Moreover, we have seen that the second obstruction is the cohomology
class of this extension in $H^{2}(\Z_{2},E)$, and that a solution
to the second obstruction is a splitting of this sequence, up to conjugation
by an element of $E$.

So our next goal is to understand if the sequence $\Sigma$ splits.
For this, we would like to understand the structure of the group $\tilde{E}$
better, and for this reason, we will describe now the invertible $k^{\psi}H$
bimodules with support in $\M$ (these are the elements of $\tilde{E}$
which goes to the nontrivial element in $\Z_{2}$). 
We begin by choosing such an invertible bimodule $X$ explicitly.
It should be of the form $X=m\otimes V$, where $V$ is both a left
and a right $k^{\psi}H$ module. The interaction between the left
structure and the right structure follows from the associativity constraints
and is given by the formula \begin{equation}
(U_{h}\cdot v)\cdot U_{h'}=\chi(h,h')U_{h}\cdot(v\cdot U_{h'}).\label{equation1}\end{equation}
 The fact that $X$ is invertible implies that $V$ has to be simple
as a left and as a right $\kpsih$-module. Assume that $V$ is $V_{\phi}$
from Subsection \ref{sub:sigmaaction} as a right $k^{\psi}H$ module,
where $\phi$ is some character of $H^{\nizav}$. We need to define
on $V$ a structure of a left $\kpsih$-module. By Equation \ref{sandpsi}
we know that \[
(v\cdot U_{t_{h}})\cdot U_{h'}=\chi(h,h')(v\cdot U_{h'})U_{t_{h}}.\]

By Equation \ref{equation1} and by the simplicity of $V$, we see
that this means that we must have \begin{equation}
U_{h}\cdot v=\nu(h)v\cdot U_{t_{h}}\label{equation-emo}\end{equation}
 for some set of scalars $\{\nu(h)\}_{h\in H}$. An easy calculation
shows that these scalars should satisfy the equation \[
\nu(ab)\psi(a,b)=\nu(a)\nu(b)\psi(t_{b},t_{a})\phi(t_{a}t_{b}t_{ab}^{-1})\]
 for every $a,b\in H$. In other words- \begin{equation}
\del(\nu\phi(t_{-}))=\psi(a,b)/\psi(t_{b},t_{a}).\label{equation2}\end{equation}
 Since $\N$ is $\sigma$-invariant, we do know that the cocycles
$\psi(a,b)$ and $\psi(t_{b},t_{a})$ are cohomologous, and therefore
such a function $\nu$ exists. Notice that we have some freedom in
choosing $\nu$- we can change it to be $\nu\eta$ where $\eta$ is
some character on $H$. It is easy to see by this construction that
the invertible $\kpsih$ bimodules with support in $\M$ are parameterized
by pairs $(\phi,\nu)$ where $\phi$ is a character of $H^{\nizav}$
by which it acts from the right on the module, and $\nu$ is a function
which satisfy the equation \[
\del\nu(a,b)=\psi(a,b)/\psi(t_{a},t_{b})\phi(t_{ab}t_{a}^{-1}t_{b}^{-1}).\]
 We denote the corresponding invertible bimodule by $X(\phi,\nu)$.
It is possible to choose $\psi$ and $t_{h}$ in such a way that will
assure us that $\nu|_{H^{\nizav}}$ is a character (for example- take
$\psi$ an inflation of a cocycle on $H/H^{\nizav}$ and take $t_{h}=1$
for $h\in H^{\nizav}$. We will thus assume henceforth that this is
the case.

We fix an invertible bimodule $X$ for which $\phi=1$, and for which
the restriction of $\nu$ to $H^{\nizav}$ is the trivial character
(we use here the fact that we can alter $\nu$ by a character of $H$
and the fact that any character of $H^{\nizav}$ can be extended to
a character of $H$). It is also easy to see that we can choose $\phi$
as we wish because for every choice of $\phi$, Equation \ref{equation2}
will have a solution. One last remark- notice that in that case, where
$H^{\nizav}$ acts trivially from the left and from the right, Equation
\ref{equation-emo} implies that $\nu(h)$ depends only on the coset
of h in $H^{\nizav}$. We can thus consider $\nu$ also as a function
from $H/H^{\nizav}$ to $k^{*}$.

In conclusion- we have fixed an invertible bimodule $X$ with support
in $\M$ upon which $H^{\nizav}$ acts trivially from the left and
from the right. Any other invertible bimodule with support in $\M$
will be of the form $X\otimes_{\kpsih}e$ for some $e\in E$. The
action of the nontrivial element $\sigma$ of $\Z_{2}$ on $E$ will
be conjugation by $X$, and the second obstruction is the possibility
to choose an $e\in E$ such that \[
(X\otimes e)\otimes_{\kpsih}(X\otimes e)\cong\kpsih.\]

\subsection{The action of $\sigma$ on $E$, and an explicit calculation of the second obstruction}

We would like to understand now the action of $\sigma$ on $E$. This
in turn will help us to understand the second obstruction.

As we have seen, a general element in $E$ will be a bimodule of the
form $U_{a_{i},\lambda}$. So we would like to understand what is
the bimodule $\sigma(U_{a_{i},\lambda})$.

We have the equation \[
X\otimes_{\kpsih}U_{a_{i},\lambda}=\sigma(U_{a_{i},\lambda})\otimes_{\kpsih}X.\]
 A similar calculation to the calculations we had so far reveals the
fact that if $X$ is given by $(1,\nu)$ then $U_{a_{i},\lambda}\otimes_{\kpsih}X$
is given by $(\chi(a_{i},-),\nu\lambda\chi^{-1}(a_{i},t_{-}))$, while
$X\otimes_{\kpsih}U_{a_{i},\lambda}$ is given by $(\lambda^{-1},\nu\chi^{-1}(a_{i},-)\lambda(t_{-})$.
From these two formulas we can derive an explicit formula for the
action of $\sigma$ on $E$. It follows that if $\sigma(U_{a_{i},\lambda})=U_{a_{j},\mu}$
then $j$ is the unique index which satisfies $\lambda^{-1}=\chi(a_{j},-)$
on $H^{\nizav}$, and $\mu$ is given by the formula $\mu=\chi^{-1}(a_{i},-)\lambda(t_{-})\chi(a_{j},t_{-})$.

Let us find now the second obstruction. For this, we just need to
calculate $Q:=X\otimes_{\kpsih}X$. Consider first $X\otimes X$.
It is isomorphic to $V\otimes V\otimes\bigoplus_{a\in A}(V_{a})$.
The bimodule $Q$ is the quotient of $X\otimes X$ when we divide
out the action of $k^{\psi}H$.

Let us divide out first by the action of $H^{\nizav}$. If $h\in H^{\nizav}$
we see that we divide $V\otimes V\otimes V_{a}$ by $v\otimes w-\chi(a,h)v\otimes w$.
If $a\notin H$ then there is an $h\in H^{\nizav}$ such that $\chi(a,h)\neq1$.
Therefore the support of $X\otimes_{\kpsih}X$ will be $Vec_{H}$.
Since $V$ is simple as a left and as a right $\kpsih$-module, it
is easy to see that $V\otimes_{\kpsih}V$ is one dimensional. We thus
see that $X\otimes_{\kpsih}X\cong U_{1,\lambda}$ for some character
$\lambda$. A direct calculation shows that $\lambda(h)=\nu(h)\nu(t_{h})$.
This means that the second obstruction is the character $\lambda$,
as an element of $H^{2}(\Z_{2},E)=E^{\sigma}/im(1+\sigma)$ (recall
that $\hat{H}$ is a subgroup of $E$).

Suppose that the second obstruction does vanish, and suppose that
we have a solution $X(\phi,\nu)$. In other words $X(\phi,\nu)\otimes_{\kpsih}X(\phi,\nu)\cong k^{\psi}H$.
A direct calculation similar to the one we had above shows that the
restrictions of $\phi$ and $\nu$ to $H^{\nizav}$ coincide. Recall
from Section \ref{sec:An-intrinsic-description} that if $U_{a_{i},\lambda}$
is any invertible $\kpsih$-bimodule with support in $Vec_{A}$, then
this solution is equivalent to the solution $U_{a_{i},\lambda}\otimes_{\kpsih}X(\phi,\nu)\otimes_{\kpsih}U_{a_{i},\lambda}^{-1}$.
Extend the character $\nu_{H^{\nizav}}$ to a character $\eta$ of
$H$. A direct calculation shows that $U_{1,\eta}\otimes_{\kpsih}X(\phi,\nu)\otimes_{\kpsih}U_{1,\eta}^{-1}=X(1,\nu')$.
It follows that we can assume without loss of generality that $\phi=1$.

As we have seen above, $X(\phi,\nu)^{\otimes2}\cong\kpsih$ if and
only if $\nu(h)\nu(t_{h})=1$ for every $h\in H$. So the second obstruction
vanishes if and only if there is a function $\nu$ which satisfies
equations \ref{equation2} and also the equation \begin{equation}
\nu(h)\nu(t_{h})=1\label{equation3}\end{equation}
 for every $h\in H.$ It might happen, however, that we will have
two different solutions $\nu$ and $\nu'$, that will be equivalent-
that is, there will be an invertible $\kpsih$ bimodule $U_{a_{i},\lambda}$
such that $U_{a_{i},\lambda}\otimes_{\kpsih}X(1,\nu)\otimes_{\kpsih}U_{a_{i},\lambda}^{-1}\cong X(1,\nu')$.
A careful analysis shows that this happen if and only if the following
condition holds: there is a character $\eta$ on $H$ which vanishes
on $H^{\nizav}$, such that \begin{equation}
\nu(h)/\nu'(h)=\eta(h)/\eta(t_{h}).\label{equation4}\end{equation}

In conclusion- the second obstruction is the existence of a function
$\nu:H\rightarrow H/H^{\nizav}\rightarrow k^{*}$ which satisfy Equations
\ref{equation2} and \ref{equation3}. and two such functions $\nu$
and $\nu'$ give equivalent solutions if and only if there is a character
$\eta$ of $H$ which vanishes on $H^{\nizav}$ and which satisfies
Equation \ref{equation4}.

\subsection{The third obstruction }

As explained in Section \ref{sec:An-intrinsic-description}, after
solving the second obstruction, we can think about the third obstruction
in the following way: we have an invertible $\kpsih$ bimodule $X$
with support in $\M$, and $X\otimes_{\kpsih}X\cong\kpsih$. We would
like to turn $\kpsih\oplus X$ into an algebra in $\C$. The only
obstruction for that (and this is the third obstruction) is that the
multiplication on $X\otimes X\otimes X$ might be associative only
up to a scalar. This scalar is the third obstruction, considered as
an element of $H^{3}(\Z_{2},k^{*})=\{1,-1\}$. Following the work
of Tambara (see \cite{T}), we see that this sign is the sign of the
following expression \[
\sum_{a\in H/H^{\nizav}\textrm{ }}\nu(h)\tau.\]
 If the third obstruction vanishes, we only have one possible solution,
as $H^{2}(\mathbb{Z}_{2},k^{*})=1$, since we have assumed that $k$
is algebraically closed. This finishes the proof of Proposition \ref{maintambara}

\subsection{Relation to the Tambara's Work}

In \cite{T}, Tambara classified all fiber functors on $\ty$. In
the language of module categories, he classified all module categories
over $\ty$ of rank 1. In the language of our classification, he described
all module categories whose parameterization begins with $(\M(A,\psi),\Z_{2},\ldots)$
for some $\psi$.

There is a deeper connection between our result and the result of
Tambara, as we will show now. Assume that we have a module category
over $\ty$ whose classification begins with $(\mhpsi,\Z_{2},\ldots)$.
Then, as we have seen, $H^{\nizav}<H$, and $\chi$ induces a nondegenerate
symmetric bicharacter $\bar{\chi}$ in $\bar{H}:=H/H^{\nizav}$. We
thus have another Tambara Yamagami fusion category $\D:=\mathcal{TY}(\bar{H},\bar{\chi},\bar{\tau})$,
where $\bar{\tau}$ has the same sign as $\tau$. In order to explain
the connection, we first recall the following theorem of Tambara (Proposition
3.2 in \cite{T})
\begin{thm}
Fiber functors on $\D$ correspond to triples $(s,\psi,\nu)$ which
satisfies the following coherence conditions: \[
\bar{\chi}(a,b)=\xi_{\psi}(s(a),b)\]
 \[
\del\nu(a,b)=\psi(a,b)/\psi(s(a),s(b))\]
 \[
\nu(a)\nu(s(a))=1\]
 \[
sign(\sum_{s(a)=a}\nu(a))=sign(\tau)\]
 Two such triples $(s,\psi,\nu)$ and $(s',\psi',\nu')$ will give
equivalent fiber functors if and only if $s=s'$ and there exist a
function $\phi:H/H^{\nizav}\rightarrow K$ such that $\psi=\del\phi\psi'$
and $\nu(h)/\nu'(h)=\phi(h)/\phi(s(h))$ \end{thm}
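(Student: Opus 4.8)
The plan is to derive Tambara's classification from the results of this paper, applied to $\D=\mathcal{TY}(\bar H,\bar\chi,\bar\tau)$ itself, regarded as a $\Z_2$-extension of $Vec_{\bar H}$. A fiber functor on $\D$ is the same datum as an indecomposable $\D$-module category of rank one: by Ostrik's theorem (recalled in Section \ref{sec:Preliminaries}) such a category is $Mod_{\D}-\underline{Hom}_{\D}(X,X)$ for its unique simple object $X$, and the fiber functor is $\underline{Hom}_{\D}(X,-)$. So it suffices to single out the rank-one categories among the $\D$-module categories classified by Theorem \ref{pro:structure} and Proposition \ref{maintambara}.

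First I would carry out the rank bookkeeping. By Theorem \ref{pro:structure} an indecomposable $\D$-module category has a parameterization beginning either with $(\M(K,\psi),1,\dots)$ (induced from $Vec_{\bar H}$) or with $(\M(K,\psi),\Z_2,\dots)$. An induced category is $Ind_{Vec_{\bar H}}^{\D}\M(K,\psi)$, which has $2\,\mathrm{rk}_{Vec_{\bar H}}\M(K,\psi)\ge 2$ simple objects (each invertible bimodule $\D_g$ preserves rank), so it is never of rank one. Hence a rank-one $\D$-module category has parameterization $(\M(K,\psi),\Z_2,\dots)$, and since its underlying abelian category is $\M(K,\psi)$, whose number of simple objects is $[\bar H:K]\cdot|Rad(\psi)|$, being of rank one forces $K=\bar H$ and $Rad(\psi)=1$. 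As $\bar\chi$ is nondegenerate we have $\bar H^{\nizav}=1$, so the condition $Rad(\psi)=\bar H^{\nizav}$ demanded by Proposition \ref{maintambara} holds automatically (equivalently, $\M(\bar H,\psi)$ is $\sigma$-invariant by Lemma \ref{lem:invariant-ty-modules}), $\bar H/\bar H^{\nizav}=\bar H$, and Proposition \ref{maintambara} applies with $H=\bar H$. It asserts that $\D$-module structures on $\M(\bar H,\psi)$ are classified by pairs $(s,\nu)$ with $s$ an involutive automorphism of $\bar H$ and $\nu\colon\bar H\to k^*$ satisfying exactly the four displayed relations, modulo the equivalence stated there.

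The last step is to match this with Tambara's parameterization by triples $(s,\psi,\nu)$. In Proposition \ref{maintambara} the two-cocycle $\psi$ is fixed as part of the module category $\M(\bar H,[\psi])$, so only its class matters for the choice of category; Tambara instead records a cocycle representative inside the triple. Thus his equivalence $(s,\psi,\nu)\sim(s,(\del\phi)\psi,\nu')$ with $\nu(h)/\nu'(h)=\phi(h)/\phi(s(h))$ simultaneously encodes the identity $\M(\bar H,\psi)=\M(\bar H,(\del\phi)\psi)$ of $Vec_{\bar H}$-module categories and, when $\del\phi=1$, the residual freedom of rescaling $\nu$ by a character that appears in Proposition \ref{maintambara} and in the discussion of the first two obstructions in Section \ref{sec:An-intrinsic-description}. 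With this dictionary the four conditions correspond one for one: the condition on $s$ is the second condition of Lemma \ref{lem:invariant-ty-modules} (it depends only on $[\psi]$, since $\psi(s(a),b)/\psi(b,s(a))=\xi_\psi(s(a),b)$); the $\del\nu$-relation is the second obstruction, $\sigma$-invariance of $\M(\bar H,\psi)$ making $\psi$ and its $s$-twist cohomologous and $\nu$ a chosen trivialization; the relation $\nu(a)\nu(s(a))=1$ is the vanishing of the second obstruction, i.e. $X^{\otimes2}\cong\kpsih$; and the sign relation is the third obstruction, computed earlier in this section following Tambara.

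The main obstacle is the precise cocycle bookkeeping in this last step. In particular one must reconcile the relation $\del\nu(a,b)=\psi(a,b)/\psi(s(b),s(a))$ coming out of the obstruction calculus with Tambara's $\del\nu(a,b)=\psi(a,b)/\psi(s(a),s(b))$, using the identity $\xi_\psi(s(b),s(a))=\xi_\psi(a,b)$ — a consequence of $\bar\chi(a,b)=\xi_\psi(s(a),b)$, the symmetry of $\bar\chi$, and $s^2=\mathrm{Id}$ — together with the freedom to change the cocycle representative of $\psi$ and to rescale $\nu$ by a character; and one must check that the two resulting notions of equivalence of solutions agree and that the left/right-module conventions of the two accounts line up. All of this is elementary manipulation with two-cocycles on the finite abelian group $\bar H$, and it is the only part of the argument that is not a direct invocation of the structure already established in this section and in Section \ref{sec:An-intrinsic-description}.
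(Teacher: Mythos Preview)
The paper does not prove this theorem: it is quoted from Tambara's paper \cite{T} (announced as ``we first recall the following theorem of Tambara (Proposition 3.2 in \cite{T})''), with the remark that the formulation is equivalent to Tambara's original one. So there is no proof in the paper to compare against.

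Your proposal is to go the other way around: deduce Tambara's classification as the special case $H=\bar H$, $H^{\nizav}=1$ of Proposition~\ref{maintambara}. That is a sensible and essentially correct route---the rank bookkeeping and the matching of the first three conditions with Lemma~\ref{lem:invariant-ty-modules} and the second obstruction are exactly right, and the reconciliation of the two $\del\nu$ formulas via $\xi_\psi(s(b),s(a))=\xi_\psi(a,b)$ is the correct computation. However, you should be aware of a circularity: the paper's derivation of the fourth (sign) condition in Proposition~\ref{maintambara} is not self-contained---the subsection on the third obstruction explicitly says ``Following the work of Tambara (see \cite{T}), we see that this sign is\ldots''. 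So you cannot extract the sign condition from Proposition~\ref{maintambara} without already invoking Tambara's computation, which is precisely what you are trying to prove. If you want a genuinely independent proof, you would need to carry out the Gauss-sum calculation for $X\otimes_{\kpsih}X\otimes_{\kpsih}X$ yourself; everything else in your outline stands.
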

\begin{rem}
This is not exactly the original formulation in Tambara's paper, but
it is equivalent.
\end{rem}
The following lemma is now an easy corollary from Proposition \ref{maintambara}
and the above theorem.
\begin{lem}
There is a one to one correspondence between equivalence classes of
fiber functors on $\D$ which corresponds to triples which contains
the two cocycle $\psi$ and module categories over $\C$ whose parameterization
begins with $(\mhpsi,\Z_{2},\ldots)$.
\end{lem}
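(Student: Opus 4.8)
The plan is to show that both sides of the claimed bijection are governed by exactly the same coherence data once we account for the relation between $\psi$ on $\bar H = H/H^\n$ and the bicharacter $\bar\chi$. I would start by comparing the two lists of coherence conditions term by term. Proposition \ref{maintambara} says that $\C$-module category structures on $\mhpsi$ are given by pairs $(s,\nu)$ with $\bar\chi(a,b) = \psi(s(a),b)/\psi(b,s(a))$, $\del\nu(a,b) = \psi(a,b)/\psi(s(b),s(a))$, $\nu(a)\nu(s(a)) = 1$, and $sign(\sum_{s(a)=a}\nu(a)) = sign(\tau)$, with equivalence classes identified under $\nu(h)/\nu'(h) = \eta(h)/\eta(s(h))$ for a character $\eta$. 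Tambara's theorem says fiber functors on $\D = \mathcal{TY}(\bar H,\bar\chi,\bar\tau)$ are given by triples $(s,\psi,\nu)$ with the same four conditions (noting $\xi_\psi(s(a),b) = \psi(s(a),b)/\psi(b,s(a))$ by definition of $\xi_\psi$, and that $\psi(s(a),s(b)) = \psi(s(b),s(a))$ up to coboundary since... — here one must be slightly careful, see below), with the equivalence identifying $(s,\psi,\nu) \sim (s',\psi',\nu')$ when $s = s'$, $\psi = \del\phi\,\psi'$, and $\nu(h)/\nu'(h) = \phi(h)/\phi(s(h))$.

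The key observation is that in the statement of the lemma we fix the two cocycle $\psi$ (we only look at Tambara triples whose middle entry is our given $\psi$). Once $\psi$ is fixed, a Tambara triple with that fixed $\psi$ is precisely a pair $(s,\nu)$ satisfying Tambara's coherence conditions, and the residual equivalence must have $\psi = \del\phi\,\psi'$ with $\psi' = \psi$, i.e. $\del\phi = 1$, i.e. $\phi$ is a character. So the equivalence relation on triples-with-fixed-$\psi$ becomes exactly: $(s,\nu)\sim(s,\nu')$ iff there is a character $\phi$ of $\bar H$ with $\nu(h)/\nu'(h) = \phi(h)/\phi(s(h))$. This is literally the equivalence relation appearing in Proposition \ref{maintambara} (with $\phi$ playing the role of $\eta$, and $s$ of $\bar s$). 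Thus the assignment $(s,\psi,\nu)\mapsto(s,\nu)$ is the desired bijection, and what remains is only to check that the two sets of coherence equations genuinely coincide after the identifications $\xi_\psi(x,y) = \psi(x,y)/\psi(y,x)$ and $s^2 = Id$.

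Concretely, the steps I would carry out are: (1) record that by definition $\xi_\psi(s(a),b) = \psi(s(a),b)/\psi(b,s(a))$, so Tambara's first equation $\bar\chi(a,b) = \xi_\psi(s(a),b)$ is identical to the first equation of Proposition \ref{maintambara}; (2) use $s^2 = Id$ (which holds in both settings — in ours it was derived in Lemma \ref{lem:invariant-ty-modules}, in Tambara's it follows from his first two conditions together with nondegeneracy of $\bar\chi$) to rewrite $\psi(s(a),s(b))$: since $s$ is an involution, the cocycle $(a,b)\mapsto\psi(s(a),s(b))$ has the same associated alternating form up to the action of $s$, and one checks $\xi_\psi(s(a),s(b)) = \xi_\psi(a,b)$ is equivalent to $s$ being $\bar\chi$-compatible in the sense already imposed, so $\psi(a,b)/\psi(s(b),s(a))$ and $\psi(a,b)/\psi(s(a),s(b))$ differ only by a coboundary and hence $\del\nu$ can be taken to equal either; (3) observe the third and fourth conditions ($\nu(a)\nu(s(a)) = 1$ and the sign condition with $sign(\tau) = sign(\bar\tau)$) are verbatim the same; (4) deduce the equivalence relations match as explained above, giving the one-to-one correspondence. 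The main obstacle — really the only subtle point — is step (2): making sure that the slightly different-looking $\del\nu$ equations ($\psi(a,b)/\psi(s(b),s(a))$ in our Proposition versus $\psi(a,b)/\psi(s(a),s(b))$ in Tambara) define the same set of admissible $\nu$'s and the same equivalence classes. This comes down to checking that $\psi(s(a),s(b))/\psi(s(b),s(a)) = \xi_\psi(s(a),s(b)) = \xi_\psi(a,b) = \bar\chi(a,b)\cdot\bar\chi(a,b)^{-1}$... more precisely that this ratio is a coboundary, which follows because $s^2 = Id$ forces $\xi_\psi$ to be $s$-invariant, hence $\psi\circ(s\times s)$ and $\psi$ are cohomologous; absorbing this coboundary into $\nu$ (which only shifts $\nu$ by a character, already accounted for in the equivalence) completes the argument.
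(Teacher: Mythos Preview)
Your approach is exactly the paper's: the lemma is stated there as an ``easy corollary'' of Proposition~\ref{maintambara} and the restated Tambara theorem, and the intended proof is precisely the term-by-term comparison you describe, with the observation that fixing $\psi$ forces $\partial\phi=1$ so that the equivalence relations coincide. You have in fact written out more than the paper does; the only place to tighten is your step~(2), where you should note directly that the first coherence condition together with $s^{2}=\mathrm{Id}$ forces $\xi_{\psi}(s(a),s(b))=\xi_{\psi}(b,a)$, so that $\psi(a,b)/\psi(s(b),s(a))$ has trivial associated alternating form and the two $\partial\nu$ conditions (which the paper itself records with a slight discrepancy in argument order) pick out the same set of $\nu$'s up to characters.
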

The Lemma says that we have a correspondence between fiber functors on
one Tambara Yamagami category and some module categories oevr another Tambara Yamagami category.
However, we do not know about a plausible
explanation of \italic{why} it happens.

We can now use the results of Tambara to obtain another description
of our module categories. Indeed, in his paper Tamabara gave several
description of fiber functors of $\D$. Applying Theorem 3.5 from
\cite{T}, we get the following
\begin{cor}
Let $\ty$, $\mhpsi$ be as above. Assume that $H^{\nizav}<H$ and
that $Rad(\psi)=H^{\nizav}$. Then the different ways to put on $\mhpsi$
a $\ty$-module structure are parameterized by pairs $(s,\mu)$ where
$s$ is an involutive automorphism of $H/H^{\nizav}$, and $\mu:\bar{H}^{s}/\bar{H}_{s}\rightarrow k^{*}$
satisfy \[
\bar{\chi}(a,b)=\xi_{\psi}(s(a),b)\]
 \[
\mu(a)\mu(b)/\mu(ab)=\tilde{\chi}(a,b)\]
 \[
sign(\mu)=sign(\tau)\]
 Here $\bar{H}^{s}$ is the subgroup of $s$-invariant elements, $\bar{H}_{s}$
is the subgroup of elements of the form $as(a)$, The map $\tilde{\chi}$
is the induced bilinear form on $\bar{H}^{s}/\bar{H}_{s}$ (one of
Tambara's result is the fact that this is indeed well defined), and
$sign(\mu)$ is the sign of $\mu$ as a quadratic map (It is quite
easy to show that $\bar{H}^{s}/\bar{H}_{s}$ is a vector space over
$\Z_{2}$ and therefore we can talk about this sign). See Tambara's
paper \cite{T} for more details.
\end{cor}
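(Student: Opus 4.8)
The plan is to derive the Corollary by chaining three facts already in hand: Proposition \ref{maintambara}, the Lemma of the previous subsection identifying our module categories with fiber functors on $\D=\mathcal{TY}(\bar{H},\bar{\chi},\bar{\tau})$, and Tambara's alternative description of such fiber functors. First I would recall that, by that Lemma, module categories over $\C$ whose parameterization begins with $(\mhpsi,\Z_{2},\ldots)$ are in bijection with equivalence classes of fiber functors on $\D$ whose associated triple $(s,\psi,\nu)$ contains the given two cocycle $\psi$; here $\bar{\tau}$ has the same sign as $\tau$, which is why $sign(\tau)$ (not $sign(\bar\tau)$ separately) is what ends up in the final answer. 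Thus it suffices to re-parameterize this particular set of fiber functors.

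Next I would invoke Theorem 3.5 of \cite{T}, which re-describes equivalence classes of fiber functors on $\D$ by pairs $(s,\mu)$, where $s$ is an involutive automorphism of $\bar{H}$ and $\mu:\bar{H}^{s}/\bar{H}_{s}\rightarrow k^{*}$ satisfies $\mu(a)\mu(b)/\mu(ab)=\tilde{\chi}(a,b)$ and $sign(\mu)=sign(\tau)$, with $\bar{H}^{s}$, $\bar{H}_{s}$ and $\tilde{\chi}$ as in the statement. In Tambara's passage from the $(s,\psi,\nu)$ description to the $(s,\mu)$ description the involution $s$ is unchanged, and the relation $\bar{\chi}(a,b)=\xi_{\psi}(s(a),b)$ forces the cohomology class of $\psi$ to be determined by $s$; hence requiring the triple to contain our fixed $\psi$ is the same as requiring $s$ to be an involutive automorphism of $H/H^{\nizav}$ with $\bar{\chi}(a,b)=\xi_{\psi}(s(a),b)$ — exactly the first displayed condition of the Corollary. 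Composing the two bijections then produces the asserted parameterization by pairs $(s,\mu)$ subject to the three displayed conditions.

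The step requiring genuine care — and the main obstacle — is verifying that the bijection furnished by the Lemma is compatible with the two equivalence relations in play: on the fiber-functor side, Tambara's relation ($\psi\mapsto\del\phi\cdot\psi$ together with $\nu(h)/\nu'(h)=\phi(h)/\phi(s(h))$), and on the module-category side, the relation $\nu(h)/\nu'(h)=\eta(h)/\eta(s(h))$ (for $\eta$ a character of $H$ vanishing on $H^{\nizav}$) coming from conjugation by invertible $\kpsih$-bimodules supported in $Vec_{A}$, as established in Proposition \ref{maintambara}. Once these are matched one also has to track how the remaining conditions $\nu(a)\nu(s(a))=1$ and $sign(\sum_{s(a)=a}\nu(a))=sign(\tau)$ turn into the cocycle identity $\mu(a)\mu(b)/\mu(ab)=\tilde{\chi}(a,b)$ and the quadratic-sign condition $sign(\mu)=sign(\tau)$; but this conversion, together with the well-definedness of $\tilde{\chi}$ on $\bar{H}^{s}/\bar{H}_{s}$, is precisely the content of Theorem 3.5 of \cite{T} and can be quoted rather than reproved. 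After these identifications no further work is needed and the Corollary follows.
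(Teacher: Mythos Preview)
Your proposal is correct and follows essentially the same route as the paper: the paper derives the Corollary simply by combining the preceding Lemma (bijection with fiber functors on $\D$ containing $\psi$) with Theorem 3.5 of \cite{T}, and your outline does exactly this, with additional care about matching the equivalence relations that the paper leaves implicit.
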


\subsection{Dual categories}

In this subsection we shall give a general description of the dual
categories of $\ty$. First recall (see \cite{ENO}) that if $\el\cong Mod_{\C}-L$
is a module category over a fusion category $\C$, where $L$ is an
algebra in $\C$, then the dual category $(\C)_{\el}^{*}$ is equivalent
as a fusion category to the category of $L$-bimodules in $\C$.

We begin with duals with respect to module categories of the form
$\el=\M(\N,1,\Phi,v,\beta)$. In this case, $\el\cong Mod_{\C}-\kpsih$
for some $H<A$ and some two cocycle $\psi$. We have described above
the category of $\kpsih$-bimodules with support in $Vec_{A}$. We
have seen that it is a pointed category with an abelian group of invertible
objects, which we have described in Subsection \ref{sub:abimods}.
Consider now the $\kpsih$-bimodules with support in $\M$. Following
previous calculations, we see that such a bimodule is given by a vector
space $V$ which is both a left and a right $\kpsih$-module, and
the interaction between the left and the right structure is given
by the formula \begin{equation}
(U_{h}\cdot v)\cdot U_{h'}=\chi(h,h')U_{h}\cdot(v\cdot U_{h'}).\end{equation}
 We can think of such modules as $k^{\theta}[H\times H]$-modules,
where $\theta$ is a suitable two cocycle. By this point of view,
the isomorphism classes of indecomposable modules is in bijection
with the characters of $Rad(\theta)<H\times H$. Let us denote the
indecomposable module which corresponds to a character $\zeta$ of
$Rad(\theta)$ by $V_{\zeta}$. A routine and tedious calculation
shows us that the group of invertible $\kpsih$-bimodules with support
in $Vec_{A}$ acts on the modules with support in $\M$ via the following
formulas: \[
U_{a_{i},\lambda}\otimes_{B}V_{\zeta}=V_{(\lambda,\chi(a_{i},-))\zeta}\]
 \[
V_{\zeta}\otimes_{B}U_{a_{i},\lambda}=V_{(\chi^{-1}(a_{i},-),\lambda^{-1})\zeta}\]
 We know that the dual category is graded by $\Z_{2}$ in the obvious
sense. We use this fact in order to conclude the following multiplication
formula: \[
V_{\zeta}\otimes_{B}V_{\eta}=\bigoplus_{(\lambda,\chi(a_i,-))t^*(\eta)=\zeta}U_{a_{i},\lambda}\]
 where by $t^{*}(\eta)$ we mean the composition of $\eta$ with the
map $H\times H\rightarrow H\times H$ given by $(h_{1},h_{2})\mapsto(h_{2},h_{1})$.
Notice that by the analysis done in Section \ref{sec:An-intrinsic-description}
and by the observation that the group of invertible bimodules with
support in $Vec_{A}$ acts transitively on the set $\{V_{\zeta}\}$,
we see that the dual is pointed if and only if the category $\el$
is $\sigma$-invariant.

We consider now module categories of the second type. By this we mean
categories of the form $\el=\M(\N,\langle\sigma\rangle,\Phi,v,\beta)$.
Assume that $\el=Mod-{Vec_{A}}\mhpsi$ over $Vec_{A}$. Then $\sigma(H,\psi)=(H,\psi)$
and we have an action of $\sigma$ on the abelian group $E$ of invertible
bimodules with support in $Vec_{A}$. We have an equivalence of fusion
categories $(Vec_{A})_{\el}^{*}\cong Vec_{E}^{\omega}$ for some three
cocycle $\omega\in H^{3}(E,k^{*})$.

We have seen in Section \ref{sec:Equivaraintization} that the dual
$(\C)_{\el}^{*}$ will be the equivariantization of this category
with respect to the action of $\Z_{2}$. If, for example, we would
have known that $\omega=1$, then this equivariantization would have
been equivalent to the representation category of the group $\Z_{2}\ltimes\hat{E}$
In general, the description of this category is not much harder.

We conclude by observing that $\ty$ is group theoretical if and only
if there is a pair $(H,\psi)$ such that $\sigma(H,\psi)=(H,\psi)$.
This gives an alternative proof of the fact that $\ty$ is group theoretical
if and only if the metric group $(A,\chi)$ has a Lagrangian subgroup (see Corollary 4.9 of \cite{GNN}).

\end{document}